\newcommand{\Z}{\mathbb{Z}}
\newcommand{\N}{\mathbb{N}}
\newcommand{\Q}{\mathbb{Q}}
\renewcommand{\Im}{\text{Im}}
\DeclareMathOperator{\skel}{skel}
\newtheorem{theorem}{Theorem}
\newtheorem{lemma}{Lemma}
\newtheorem*{unnumtheorem}{Theorem}
\newtheorem*{unnumclaim}{Claim}
\newtheorem*{unnumlemma}{Lemma}
\newtheorem*{lovaszlemma}{Lov\'asz Local Lemma}
\newtheorem{corollary}[theorem]{Corollary}
\theoremstyle{definition}
\newtheorem*{definition}{Definition}
\newtheorem{question}{Question}
\title{Small simplicial complexes with prescribed torsion in homology}
\author{Andrew Newman \thanks{The Ohio State University; partially supported by the National Science Foundation grant NSF-DMS \#1547357}}
\date{\today}
\begin{document}
\maketitle
\abstract
For $d \geq 2$ and $G$ a finite abelian group, define $T_d(G)$ to be the minimum number of vertices $n$ so that there exists a simplicial complex $X$ on $n$ vertices which has the torsion part of $H_{d - 1}(X)$ isomorphic to $G$. Here we use the probabilistic method, in particular the Lov\'asz Local Lemma, to establish an upper bound on $T_d(G)$ which matches the known lower bound up to a constant factor. That is, we prove that for every $d \geq 2$ there exist constants $c_d$ and $C_d$ so that for any finite abelian group $$c_d(\log |G|)^{1/d} \leq T_d(G) \leq C_d(\log |G|)^{1/d}.$$

\section{Introduction}
For an abelian group $A$, let $A_T$ denote the torsion subgroup of $A$. Given a $d$-dimensional simplicial complex $X$ on $n$ vertices, it is natural to ask which finite abelian groups could appear as $H_{d - 1}(X)_T$. One of the earliest, and perhaps most surprising, answers to this question comes from Kalai's groundbreaking paper on $\Q$-acyclic complexes \cite{Kalai}. A $d$-dimensional $\Q$-acyclic complex on $n$ vertices is defined in \cite{Kalai} to be a simplicial complex $X$ on $n$ vertices with complete $(d-1)$-skeleton so that $H_d(X) = 0$ and $H_{d - 1}(X)$ is finite. Thus $\Q$-acyclic complexes are higher-dimensional analogues of trees, and the main result of \cite{Kalai} is the following generalization of Cayley's formula: 
\begin{unnumtheorem}[Theorem 1 of \cite{Kalai}]
For any $n \in \N$ and any dimension $d \geq 2$, let $\mathcal{C}_{n, d}$ denote the collection of $d$-dimensional $\Q$-acyclic complexes on vertex set $[n]$ then 
$$\sum_{X \in \mathcal{C}_{n, d}} |H_{d - 1}(X)|^2 = n^{\binom{n - 2}{d}}.$$
\end{unnumtheorem}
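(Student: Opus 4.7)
The plan is to generalize Borchardt's matrix--tree proof of Cayley's formula (the case $d=1$). The strategy has three steps: express $|H_{d-1}(X)_T|$ as an integer determinant attached to $X$; collapse the sum over $X$ via the Cauchy--Binet identity; and evaluate the resulting single determinant using the spectrum of the combinatorial Laplacian on the full $(n-1)$-simplex $\Delta_{n-1}$.

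\textbf{Step 1: determinantal formula for torsion.} An Euler-characteristic calculation forces $f_d(X) = \binom{n-1}{d}$ for every $X \in \mathcal{C}_{n,d}$. Fix the vertex $n$, let $R := \{\tau \in \binom{[n]}{d} : n \notin \tau\}$, so $|R| = \binom{n-1}{d}$, and let $\partial = \partial_d$ be the integer $d$-th boundary matrix of $\Delta_{n-1}$. For $X \in \mathcal{C}_{n,d}$ let $\partial^R_X$ be the square submatrix of $\partial$ keeping rows in $R$ and columns indexed by the $d$-faces of $X$. I would prove
$$|H_{d-1}(X)_T| = |\det \partial^R_X|,$$
with $\det\partial^R_X = 0$ when $X$ is not $\mathbb{Q}$-acyclic, by showing that the projection $\pi_R : C_{d-1}(\Delta_{n-1}) \twoheadrightarrow \mathbb{Z}\langle R\rangle$ restricts to a $\mathbb{Z}$-module isomorphism $Z_{d-1}(\Delta_{n-1}) \xrightarrow{\cong} \mathbb{Z}\langle R\rangle$. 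This uses that $\operatorname{star}(n)$ is a cone: the elements $\partial(\tau \cup \{n\})$ for $\tau \in R$ furnish an explicit integral basis of $Z_{d-1}(\Delta_{n-1})$. Under this identification, $\partial^R_X$ is literally a square presentation matrix for $H_{d-1}(X) = Z_{d-1}(\Delta_{n-1})/\partial(C_d(X))$, so its determinant has absolute value equal to the order of the torsion.

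\textbf{Step 2: Cauchy--Binet.} Let $\partial^R$ be the full $\binom{n-1}{d}\times\binom{n}{d+1}$ matrix obtained from $\partial$ by keeping rows in $R$. Cauchy--Binet gives
$$\det\bigl(\partial^R(\partial^R)^T\bigr) \;=\; \sum_{|S| = \binom{n-1}{d}}\bigl(\det \partial^R_S\bigr)^2,$$
summed over $\binom{n-1}{d}$-subsets $S$ of $d$-faces of $\Delta_{n-1}$. By Step 1 the nonzero summands are exactly those $S$ arising as the top-face set of some $X \in \mathcal{C}_{n,d}$, each contributing $|H_{d-1}(X)_T|^2$. \textbf{Step 3: spectral evaluation.} Since $\Delta_{n-1}$ is $\mathbb{Z}$-acyclic, the standard Hodge-style identity $L_{d-1} := \partial_d\partial_d^T + \partial_{d-1}^T\partial_{d-1} = nI$ on $C_{d-1}(\Delta_{n-1})$ implies $\partial_d\partial_d^T = nP$, where $P$ is the orthogonal projection onto $Z_{d-1}(\Delta_{n-1})$, which has rank $\binom{n-1}{d}$. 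Therefore $\det\bigl(\partial^R(\partial^R)^T\bigr) = n^{\binom{n-1}{d}}\det(P_{R,R})$, and a direct volume computation using the same splitting $C_{d-1}(\Delta_{n-1}) = \mathbb{Z}\langle R\rangle \oplus C_{d-1}(\operatorname{star}(n))$ yields $\det(P_{R,R}) = n^{-\binom{n-2}{d-1}}$, so the total is $n^{\binom{n-1}{d} - \binom{n-2}{d-1}} = n^{\binom{n-2}{d}}$, as desired.

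The main obstacle is carrying out Step 1 with integer rather than merely rational coefficients: specifically, verifying that $\pi_R|_{Z_{d-1}}$ is an isomorphism of $\mathbb{Z}$-modules and not just of $\mathbb{Q}$-vector spaces, so that the Smith normal form of $\partial^R_X$ truly computes $H_{d-1}(X)_T$. Once this integral splitting is in hand, both the torsion identity and the principal-minor evaluation in Step 3 reduce to parallel computations in the same direct-sum decomposition.
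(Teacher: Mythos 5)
This statement is background quoted verbatim from Kalai's paper; the present paper offers no proof of it, so the only meaningful comparison is with Kalai's original argument, and your outline is essentially that argument: the simplicial matrix--tree theorem obtained by (i) a determinantal formula for $|H_{d-1}(X)|$ via the reduced boundary matrix, (ii) Cauchy--Binet, and (iii) the identity $\partial_d\partial_d^T+\partial_{d-1}^T\partial_{d-1}=nI$ on $C_{d-1}(\Delta_{n-1})$. Your Step 1 is sound exactly as you sketch it: a $(d-1)$-cycle supported on the star of $n$ must vanish because the part of its boundary on faces not containing $n$ reproduces its coefficients, and $\pi_R(\partial(\tau\cup\{n\}))=\pm\tau$ gives integral surjectivity, so the $\partial(\tau\cup\{n\})$ do form a $\Z$-basis of $Z_{d-1}(\Delta_{n-1})$ and $\partial^R_X$ is an honest presentation matrix. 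The one place you wave your hands, the evaluation $\det(P_{R,R})=n^{-\binom{n-2}{d-1}}$, does close along the lines you indicate: writing $B$ for the matrix whose columns are $\partial(\tau\cup\{n\})$, one has $\det(P_{R,R})=\det(B_R)^2/\det(B^TB)=1/\det(B^TB)$, and in the splitting $C_{d-1}=\Z\langle R\rangle\oplus C_{d-1}(\operatorname{star}(n))$ each column is $(\pm e_\tau,\partial'\tau)$ with $\partial'$ the $(d-1)$st boundary map of the full simplex on $[n-1]$, so $B^TB=I+\partial'^T\partial'$, whose spectrum (eigenvalue $n$ with multiplicity $\binom{n-2}{d-1}$, eigenvalue $1$ with multiplicity $\binom{n-2}{d}$, using the total-Laplacian identity on $[n-1]$) gives $\det(B^TB)=n^{\binom{n-2}{d-1}}$ and hence the stated total $n^{\binom{n-1}{d}-\binom{n-2}{d-1}}=n^{\binom{n-2}{d}}$. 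So your proposal is correct and coincides in substance with the known proof of the cited theorem.
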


As a corollary to this result, Kalai shows that for every dimension $d \geq 2$ there is a positive constant $k_d$ so that, 
$$\mathbb{E}(|H_{d - 1}(X)|^2) \geq \exp(2k_d n^d)$$
where $X$ is taken uniformly from $\mathcal{C}_{n, d}$. Furthermore, \cite{Kalai} observes that the maximum size of $H_{d - 1}(X)_T$ for $X$ a $d$-dimensional simplicial complex on $n$ vertices is bounded above by $\exp(K_d n^d)$ for some constant $K_d$ depending only on $d$. Thus for every $n \in \N$ and $d \geq 2$,
$$\exp(k_d n^d) \leq \max_{X \subseteq \Delta^{n - 1}} |H_{d - 1}(X)_T| \leq \exp(K_dn^d).$$

This result establishes the existence of small complexes with exceptionally large torsion in homology. Furthermore, we can observe the phenomenon of enormous torsion in homology explicitly, though currently only for a few classes of simplicial complexes. For example, \cite{LMR} and \cite{Meshulam} provide one of the only known classes of explicit constructions for $\Q$-acyclic complexes. These complexes are called sum complexes, and may be observed to have large torsion in homology. For $n \in \N$ and $A$ a subset of $\Z/n\Z$, \cite{LMR} defines the sum complex $X_A$ on $n$ vertices to be the $(|A| - 1)$-dimensional complex with vertex set $\Z/n\Z$, complete $(|A| - 2)$-skeleton and each possible top-dimensional face included if and only if the sum of the vertices that determine it belong to $A$. The main result of \cite{LMR} is that if $n$ is prime then $X_A$ is always a $\Q$-acyclic complex. We seem to get interesting examples of simplicial complexes with torsion in homology from this class of simplicial complexes. For example if $X$ is the sum complex  $X_{\{0, 1, 3\}}$ on 41 vertices then
$$H_1(X) \cong \Z/83\Z \oplus \Z/83\Z \oplus \Z/313{,}156{,}754{,}870{,}106{,}981{,}917{,}996{,}329{,}463 \Z.$$
Table \ref{tbl:sumcomplexes} gives some examples of $\Q$-acyclic sum complexes to show how large torsion can be for a $d$-complex on $n$ vertices. The dimension of each example is implicit from the set $A$, since $d = |A| - 1$.\\

%$$H_1(X) \cong \Z/83\Z \oplus \Z/83\Z \oplus \Z/313{,}156{,}754{,}870{,}106{,}981{,}917{,}996{,}329{,}463 \Z.$$
\begin{table}[h]
  \centering
  \renewcommand{\arraystretch}{1.2}
  \begin{tabular}{|c|c|c|}
	\hline
	$A$ & $|V(X_A)| $ & Approximate size of $H_{d - 1}(X_A)_T$ \\ \hline
	$\{0, 1, 3\}$ & 41 & $2.157 \times 10^{33}$ \\
	$\{0, 2, 7\}$ & 43 & $1.205 \times 10^{63}$ \\
	$\{0, 6, 21\}$ & 53 & $1.972 \times 10^{84}$ \\ \hline
	$\{0, 2, 3, 4\}$ & 19 & $2.758 \times 10^{29}$ \\
	$\{0, 1, 3, 4\}$ & 23 & $4.493 \times 10^{38}$ \\
	$\{0, 1, 5, 11\}$ & 29 & $3.730 \times 10^{253}$ \\ \hline
	$\{0, 1, 3,4, 5\}$ & 13 & $4.118 \times 10^{16}$ \\
	$\{0, 2, 7, 8, 9\}$ & 17 & $4.011 \times 10^{102}$ \\
	$\{0, 1, 2, 3, 6\}$ & 19 & $2.377 \times 10^{150}$ \\
	\hline
  \end{tabular}
  \caption{Examples of torsion groups in homology of sum complexes}\label{tbl:sumcomplexes}
\end{table}

A second source of interesting examples comes from the torsion burst in the Linial--Meshulam model of random simplicial complexes. The torsion burst in the Linial--Meshulam refers to the apparent emergence of torsion in the codimension-1 homology group immediately before the first nontrivial cycle appears in top homology, that is around the threshold found in \cite{AL,LP}. For example, computational experiments examining the torsion burst in the Linial--Meshulam model in \cite{KLNP} found a 5-dimensional simplicial complex $X$ on 16 vertices with
$$H_4(X) \cong \Z^{36} \oplus \Z/1{,}147{,}712{,}621{,}067{,}945{,}810{,}235{,}354{,}141{,}226{,}409{,}657{,}574{,}376{,}675\Z.$$

This phenomenon of enormous torsion in homology in this random setting has been observed experimentally, for example by \cite{LP2} and by \cite{KLNP} but the reason it occurs remains unknown. Nevertheless, Table \ref{tbl:torsionburst} provides examples of randomly generated simplicial complexes with torsion in homology coming from the Linial--Meshulam torsion burst. For more background on the torsion burst, see \cite{KLNP}. \\

\begin{table}[H]
  \centering
  \renewcommand{\arraystretch}{1.2}
  \begin{tabular}{|c|c|c|}
	\hline
	$d$ & $n $ & Approximate size of $H_{d - 1}(X)_T$ \\ \hline
	2 & 50 & 27288 \\
%	2 & 75 & $2.203 \times 10^{27}$ \\
	2 & 100 & $9.236 \times 10^{58}$ \\
%	2 & 125 & $5.431 \times10^{127}$ \\ 
	2 & 150  & $6.691 \times 10^{205}$ \\
	2 & 200 & $3.102 \times 10^{406}$ \\ \hline
	3 & 20 & 516194 \\
%	3 & 25 & $5.658 \times 10^{28}$ \\ 
	3 & 30 & $8.503 \times 10^{82}$ \\
%	3 & 35 & $8.389 \times 10^{178}$ \\
	3 & 40 & $7.832 \times 10^{294}$ \\
%	3 & 45 & $1.907 \times 10^{484}$ \\ 
	3 & 50 & $3.423 \times 10^{722}$ \\ \hline
	4 & 15 & 4464 \\
	4 & 20 & $3.172 \times 10^{94}$ \\
	4 & 25 & $3.099 \times 10^{388}$ \\ \hline
%	4 & 30 & * \\	\hline
	5 & 14 & 35162606 \\
%	5 & 16 & $1.345 \times 10^{49}$ \\
	5 & 17 & $7.521 \times 10^{82}$ \\
%	5 & 18 & $1.208 \times 10^{156}$ \\
	5 & 20 & $2.451 \times 10^{389}$\\
%	5 & 23 & * \\
	\hline
%	6 & 14& $106426336836$ \\
%	6 & 15 & * \\
  \end{tabular}
  \caption{Examples of torsion groups in homology from the torsion burst of random complexes}\label{tbl:torsionburst}
\end{table}

While \cite{KLNP,Kalai,LMR} provide results and examples establishing that small complexes can have large torsion in homology, our purpose here is to answer an inverse question: for a finite abelian group $G$ and dimension $d \geq 2$, how many vertices are necessary to construct a $d$-dimensional simplicial complex $X$ so that $H_{d - 1}(X)_T$ is isomorphic to $G$? Towards answering this question, we define for $d \geq 2$ and $G$ a finite abelian group, $T_d(G)$ as the minimal number of vertices $n$ so that there is a simplicial complex $X$ on $n$ vertices with the torsion part of $H_{d - 1}(X)$ isomorphic to $G$.\\

 Some results on triangulating projective space, see for example \cite{Kuhnel,Lutz,Walkup}, may be used to provide upper bounds on $T_d(\Z/2\Z)$ for any dimension $d$. Additionally, for $d = 2$ and $m \in \N$, one may use a ``repeated squares" presentation of $\Z/m\Z$ to show that $T_2(\Z/m\Z) = O(\log m)$. Such a construction is described by David Speyer on a MathOverflow post \cite{Speyer} responding to a question of John Palmieri \cite{Palmieri}. Speyer's construction provides inspiration for the first part of our construction used in the proof of our main theorem. Our main theorem is the following:
\begin{theorem}
For every $d \geq 2$, there exist constants $c_d$ and $C_d$ so that for any finite abelian group $G$, $$c_d(\log |G|)^{1/d} \leq T_d(G) \leq C_d(\log |G|)^{1/d}.$$
\end{theorem}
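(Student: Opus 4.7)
The lower bound $c_d (\log|G|)^{1/d} \leq T_d(G)$ is immediate from the upper bound $|H_{d-1}(X)_T| \leq \exp(K_d n^d)$ recalled from Kalai in the introduction: any $d$-complex on $n$ vertices realizing $G$ as its $(d-1)$-st torsion must satisfy $\log|G| \leq K_d n^d$, giving $n \geq K_d^{-1/d}(\log|G|)^{1/d}$, so one may take $c_d := K_d^{-1/d}$.

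For the upper bound I plan a two-phase construction producing, given $G = \bigoplus_{i=1}^k \Z/m_i\Z$, a $d$-complex $X$ on $n = C_d(\log|G|)^{1/d}$ vertices with $H_{d-1}(X)_T \cong G$. The first phase is a deterministic ``scaffolding'' inspired by Speyer's construction. Speyer realizes $\Z/m\Z$ as $H_1$ of a $2$-complex on $O(\log m)$ vertices by concatenating gadgets that multiply the order of a distinguished cycle, which amounts to a presentation-by-repeated-squaring. I would adapt this to the $d$-dimensional setting and, crucially, exploit the fact that a $d$-complex on $n$ vertices supports $\Theta(n^{d+1})$ top-dimensional faces, running many such gadget-chains in parallel while sharing vertices aggressively. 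The outcome is a complex $Y$ on $n = O((\log|G|)^{1/d})$ vertices carrying $G$ as a prescribed subgroup of $H_{d-1}(Y)_T$.

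The second phase is a probabilistic completion via the Lov\'asz Local Lemma. The scaffolding $Y$ will in general carry extra torsion and free summands in $H_{d-1}$ that must be killed. I would augment $Y$ by adjoining a random collection of $d$-faces drawn from a controlled model, and define a family of bad events $\{B_\alpha\}$, each asserting either the survival of an unwanted homology class in the augmented complex or the destruction of a relation encoding the desired copy of $G$. If these events can be localized---so that each depends on only a constant number of random faces and each has probability bounded suitably relative to its dependency degree---the Lov\'asz Local Lemma produces a realization avoiding every bad event, yielding $H_{d-1}(X)_T \cong G$.

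The main obstacle will be the LLL step. Torsion is a global invariant of the boundary operator, while LLL demands local bad events with a sparse dependency graph; bridging this gap will require reducing the existence of unwanted torsion or spurious cycles to the presence of bounded-size certificate subcomplexes, so that each $B_\alpha$ depends only on $O(1)$ random faces. Bounding the probabilities of these events will involve a combinatorial analysis of the Smith normal form under small perturbations, and ensuring the scaffolding survives the random augmentation may require additional ``protecting'' bad events that guard the seeded relations. I expect these technicalities---rather than the scaffolding itself---to be where most of the work lies.
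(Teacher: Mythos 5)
Your lower bound is fine and matches the paper: it is exactly Kalai's bound $|H_{d-1}(X)_T| \leq \sqrt{d+1}^{\binom{n-2}{d}}$, which gives $c_d(\log|G|)^{1/d} \leq T_d(G)$ immediately.

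The upper bound, however, has a genuine gap, and it sits precisely where the real work of the theorem lies. Your Phase 1 asserts that by ``running many gadget-chains in parallel while sharing vertices aggressively'' one can already build a deterministic scaffolding on $O((\log|G|)^{1/d})$ vertices carrying $G$ inside $H_{d-1}$. No mechanism is given for this vertex sharing, and the counting heuristic (a $d$-complex on $n$ vertices has $\Theta(n^{d+1})$ faces, so torsion of size $\exp(O(n^d))$ is not ruled out) is only an information-theoretic ceiling, not a construction; realizing a \emph{prescribed} group while reusing vertices across gadgets is exactly the difficulty the theorem must overcome. Your Phase 2 then asks the Lov\'asz Local Lemma to repair global homology: the bad events you describe (``an unwanted homology class survives,'' ``a seeded relation is destroyed'') are not local in any useful sense --- whether a class survives or a Smith normal form changes depends on the entire boundary matrix, so these events neither have bounded dependency degree nor admit the probability-versus-degree tradeoff the LLL needs, and you acknowledge this without resolving it. Moreover, adjoining random $d$-faces can kill free summands but gives no control over newly created or destroyed torsion; this is the same obstruction the paper itself points out when discussing a $\Q$-acyclic strengthening.

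The paper's route avoids both problems by splitting the work differently: first a deterministic ``sphere-and-telescope'' complex $X$ on $O(\log|G|)$ vertices (linear, not $d$-th root) with $H_{d-1}(X)_T \cong G$ \emph{and bounded degree} $\Delta(X) \leq K-1$; then a compression step in which the LLL is applied not to faces but to vertex colorings. One finds a proper coloring with $O(n^{1/d})$ colors such that no two $(d-1)$-faces receive the same multiset of colors --- these bad events (two specific faces getting equal patterns) are genuinely local because of the degree bound --- and then identifies vertices of the same color. The torsion is preserved under this identification by a purely deterministic lemma: since $X/X^{(d-2)}$ and the pattern complex modulo its $(d-2)$-skeleton are homeomorphic, and quotienting by the $(d-2)$-skeleton preserves the torsion of $H_{d-1}$, one gets $H_{d-1}((X,c))_T \cong G$ on $O((\log|G|)^{1/d})$ vertices. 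If you want to salvage your outline, the missing ideas to import are (i) insisting on bounded degree in the deterministic stage, and (ii) making the randomness act on vertex identifications with local combinatorial bad events, rather than on faces with global homological bad events.
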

The lower bound is already known. Indeed it is given by the following theorem, which appears to be first due to \cite{Kalai} but a result like this also appears in \cite{Soule} who attributes it to Gabber. A proof may be found in, for example, \cite{Kalai} or \cite{HKP2}. 
\begin{unnumtheorem}[Theorem 4, part 1 of \cite{Kalai}]
If $X$ is a $d$-dimensional simplicial complex on $n$ vertices then $|H_{d-1}(X)_T| \leq \sqrt{d+1}^{\binom{n - 2}{d}}$. 
\end{unnumtheorem}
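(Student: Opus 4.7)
The plan is to realize $|H_{d-1}(X)_T|$ as a divisor of an integer determinant whose columns have $\ell^2$-norm at most $\sqrt{d+1}$ and then apply Hadamard's inequality; the exponent $\binom{n-2}{d}$ will emerge from a two-vertex reduction of the presenting matrix that exploits the contractibility of $\Delta^{n-1}$.

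The algebraic setup is standard. Since the quotient $C_{d-1}(X)/Z_{d-1}(X)$ embeds in the free group $C_{d-2}(X)$ via $\partial_{d-1}$ and hence is torsion-free, the torsion of $H_{d-1}(X)$ coincides with the torsion of $\mathrm{coker}(\partial_d^X) = C_{d-1}(X)/B_{d-1}(X)$. By Smith normal form and the determinantal-divisor identity, this torsion order equals the greatest common divisor of the $r \times r$ minors of any matrix representation of $\partial_d^X$, where $r = \mathrm{rank}_{\mathbb{Z}}(\partial_d^X)$; in particular, $|H_{d-1}(X)_T|$ divides the absolute value of every nonzero $r \times r$ minor, so it suffices to present one.

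Next, embed $X \subseteq \Delta^{n-1}$ and fix distinguished vertices $v_0, v_1 \in [n]$. Since $Z_{d-1}(\Delta^{n-1})/Z_{d-1}(X)$ is torsion-free, we have $H_{d-1}(X)_T = (Z_{d-1}(\Delta^{n-1})/B_{d-1}(X))_T$. The chain contraction of the contractible complex $\Delta^{n-1}$ with cone point $v_0$ furnishes an integer basis of $Z_{d-1}(\Delta^{n-1})$ consisting of cycles $\partial(\tau\cup\{v_0\})$ indexed by the $(d-1)$-subsets $\tau$ of $[n]\setminus\{v_0\}$, so $Z_{d-1}(\Delta^{n-1})$ is canonically identified with $C_{d-1}$ of the full simplex on $[n]\setminus\{v_0\}$; under this identification each generator $\partial\sigma$ of $B_{d-1}(X)$ becomes either the basis element $\sigma\setminus\{v_0\}$ (when $v_0 \in \sigma$) or the honest boundary $\partial\sigma$ computed inside the smaller simplex (when $v_0 \notin \sigma$). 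Iterating this construction using $v_1$ and the contractibility of the simplex on $[n]\setminus\{v_0\}$ identifies the relevant target with $C_{d-1}$ of the simplex on $[n]\setminus\{v_0,v_1\}$, a free abelian group of rank exactly $\binom{n-2}{d}$. In the resulting presentation every column is $\pm 1$-valued with at most $d+1$ nonzero entries, hence has $\ell^2$-norm at most $\sqrt{d+1}$, and Hadamard's inequality applied to any maximal minor yields the bound $|H_{d-1}(X)_T| \leq (\sqrt{d+1})^{\binom{n-2}{d}}$.

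The main obstacle is making the iterated reduction rigorous: after the first identification, $B_{d-1}(X)$ sits inside $C_{d-1}$ of the simplex on $[n]\setminus\{v_0\}$ rather than inside its cycle subgroup, so the same chain-contraction idea cannot be repeated verbatim. My plan to address this is to split $C_{d-1}$ of the simplex on $[n]\setminus\{v_0\}$ as $Z_{d-1} \oplus K$ using the $v_1$-chain-contraction, and to argue that modding out by the generators of $B_{d-1}(X)$ that project nontrivially to $K$ performs integer pivoting on those coordinates, collapsing the corresponding rows of the presenting matrix without enlarging any invariant factor, so that the torsion-contributing part of the presentation indeed sits in a free module of rank $\binom{n-2}{d}$.
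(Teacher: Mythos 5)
Your algebraic setup is sound and matches the opening moves of the standard (Kalai-style) argument: reducing to the torsion of $\mathrm{coker}(\partial_d)$, replacing $Z_{d-1}(X)$ by $Z_{d-1}(\Delta^{n-1})$, using the cone basis $\{\partial(v_0 * \tau)\}$ to identify $Z_{d-1}(\Delta^{n-1})$ with $C_{d-1}$ of the simplex on $[n]\setminus\{v_0\}$, and invoking divisibility of every nonzero maximal minor by the torsion order. This much is correct and already yields $|H_{d-1}(X)_T| \leq \sqrt{d+1}^{\binom{n-1}{d}}$. The gap is exactly where you flagged it, and the proposed repair does not close it. After the first reduction, a generator coming from a $d$-face $\sigma \ni v_0$ becomes the standard basis vector $e_{\sigma\setminus v_0}$, which is not a cycle of the smaller simplex; in the splitting $C_{d-1} = Z_{d-1} \oplus K$ given by the $v_1$-contraction it decomposes (when $v_1 \notin \sigma$) as one basis vector of $Z_{d-1}$ plus $d$ nonzero entries in $K$. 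Pivoting such columns out on their $K$-coordinates is unimodular and preserves the remaining invariant factors --- that part of your plan is fine --- but each pivot adds multiples of the pivoting column to every other column meeting the same $K$-row, and after the elimination the surviving columns are integer combinations whose entries need not be $\pm 1$ and whose supports need not have size at most $d+1$. The obstruction is not about invariant factors but about preserving the column-norm hypothesis of Hadamard, and the elimination generically destroys it.

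The known proof obtains the exponent $\binom{n-2}{d}$ by a different mechanism: it does not shrink the target below rank $\binom{n-1}{d}$ at all. Instead it observes that in the one-cone-point presentation the columns indexed by $d$-faces containing the cone vertex are unit vectors, so a maximal minor chosen to contain all of them contributes a Hadamard factor of $1$ for each such column and $\sqrt{d+1}$ only for the remaining $r - f$ columns, where $f$ is the number of $d$-faces of $X$ through the cone vertex. Turning $r - f$ into $\binom{n-1}{d} - \binom{n-2}{d-1} = \binom{n-2}{d}$ requires a vertex lying in at least $\binom{n-2}{d-1}$ of the $d$-faces, which is supplied by first reducing to the $\Q$-acyclic case (where $f_d = \binom{n-1}{d}$, so averaging over vertices gives one contained in at least $\frac{d+1}{n}\binom{n-1}{d} \geq \binom{n-2}{d-1}$ faces). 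Your proposal contains neither this counting nor the reduction to $\Q$-acyclic complexes, so even with the pivoting repaired it would not produce the stated exponent. (For context: the paper does not prove this statement itself but cites \cite{Kalai} and \cite{HKP2}; also, the weaker bound your first step does establish would still suffice for the lower bound in Theorem 1.)
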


We should note here that \cite{Kalai} states the above theorem for $X$ a $d$-dimensional $\Q$-acyclic complex on $n$ vertices, however it can be checked that this implies the result over all $d$-dimensional simplicial complexes on $n$ vertices, see for example \cite{HKP2}. \\

\section{Overview of the proof} \label{overview}

Let $X$ be a finite simplicial complexes. Denote by $\Delta_{i, j}(X)$ the maximum degree of an $i$-dimensional face in $j$-dimensional faces, that is 
$$\Delta_{i, j}(X) = \max_{\sigma \in \skel_i(X)} |\{\tau \in \skel_j(X) \mid \sigma \subseteq \tau \}|,$$ 
where $\skel_k(X)$ denotes the set of $k$-dimensional faces of $X$.
We denote by $\Delta(X)$ the maximum over $i$ and $j$ of $\Delta_{i, j}(X)$. Throughout the proof of Theorem 1, it will be important that $\Delta(X)$ is bounded, for various simplicial complexes $X$. Of course, if $X$ is a simplicial complex and $\Delta_{0, 1}(X)$ is bounded by some constant then $\Delta(X)$ is bounded by a constant depending on $\Delta_{0, 1}(X)$. Nevertheless, in the interest of simplifying statements and proofs, it is convenient to have the notation $\Delta(X)$ and a single bound for it.\\

With this notation in hand, we are ready to give an outline of the proof of the main theorem. The goal of the paper will be to provide a construction, given a dimension $d \geq 2$, which proves that the upper bound in the statement of Theorem 1 is correct. The first step will be to prove the following lemma.
\begin{lemma}\label{general}
For every $d \geq 2$, there exists a constant $K$ depending only on $d$ so that for every finite abelian group $G$ there is a $d$-dimensional simplicial complex $X$ on at most $K \log_2 |G|$ vertices with $\Delta(X) \leq K - 1$ and $H_{d - 1}(X)_T$ isomorphic to $G$.
\end{lemma}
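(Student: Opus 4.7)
My plan is to reduce to cyclic groups and then construct a small bounded-degree $d$-complex realizing each cyclic factor. By the structure theorem for finite abelian groups, any $G$ decomposes as $G \cong \bigoplus_{i=1}^{k} \Z/m_i\Z$ with $\log_2|G| = \sum_i \log_2 m_i$. Since $H_{d-1}$ of a disjoint union of $d$-complexes is the direct sum of the summand $H_{d-1}$'s (for $d \geq 2$), it suffices to construct, for each cyclic group $\Z/m\Z$, a $d$-complex $Y_m$ on $O_d(\log m)$ vertices with $\Delta(Y_m) = O_d(1)$ and $H_{d-1}(Y_m)_T \cong \Z/m\Z$; the disjoint union $\bigsqcup_i Y_{m_i}$ is then a $d$-complex on $O_d(\log |G|)$ vertices whose $\Delta$ is the maximum of the $\Delta(Y_{m_i})$, hence also $O_d(1)$, and whose torsion in $H_{d-1}$ is exactly $G$.

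For the cyclic case I would adapt Speyer's repeated-squares idea to dimension $d$. Write $m = \sum_{j \in S} 2^j$ in binary, where $S \subseteq \{0, 1, \ldots, \ell\}$ and $\ell = \lfloor \log_2 m \rfloor$. On pairwise disjoint vertex sets of bounded size I introduce simplicial $(d-1)$-spheres $Z_0, Z_1, \ldots, Z_\ell$. Between consecutive $Z_i$ and $Z_{i+1}$ I glue a ``doubling gadget'' $D_i$: a $d$-dimensional simplicial complex on a constant number of new vertices whose boundary as a $d$-chain equals $2[Z_i] - [Z_{i+1}]$. One final ``summation gadget'' $D^{\ast}$ is glued with $d$-chain boundary $\sum_{j \in S} [Z_j]$. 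The cellular boundary map $\partial_d$ of the resulting $Y_m$ then has a block-triangular structure encoding the relations $2[Z_i] = [Z_{i+1}]$ together with $\sum_{j \in S} [Z_j] = 0$. These relations reduce to the presentation $m [Z_0] = 0$, so the Smith normal form produces a single invariant factor $m$ in degree $d-1$, giving $H_{d-1}(Y_m)_T \cong \Z/m\Z$.

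The bounded-degree condition is then easy once the gadgets exist: each $Z_i$ lies in at most three gadgets ($D_{i-1}$, $D_i$, and possibly $D^{\ast}$), and each gadget introduces only $O_d(1)$ new vertices and $d$-faces, so every vertex of $Y_m$ is contained in $O_d(1)$ top-dimensional faces, and hence in $O_d(1)$ edges as noted in the introduction to Section \ref{overview}. The main obstacle is the explicit construction of $D_i$ with chain boundary exactly $2[Z_i] - [Z_{i+1}]$ and with bounded simplicial complexity. For $d = 2$ this is the triangulated pair-of-pants of Speyer's post. For $d \geq 3$ I would realize $D_i$ as a simplicial mapping cylinder of a degree-$2$ simplicial map between bounded triangulations of $S^{d-1}$, built inductively from the classical degree-$2$ map of $S^1$ on six vertices by repeated suspension (at each step subdividing carefully so the cylinder remains simplicial and locally bounded). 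Carrying out this mapping-cylinder construction so that the vertex count and vertex degree depend only on $d$, and verifying the chain-level boundary is exactly $2[Z_i] - [Z_{i+1}]$ in every dimension, is the technical crux of the lemma.
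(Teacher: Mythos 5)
Your overall strategy is the same as the paper's: reduce to cyclic groups via the structure theorem and a disjoint union, write $m$ in binary, and realize the repeated-squares presentation by a telescope of doubling gadgets (mapping-cylinder-of-degree-2 pieces, built up from the $d=2$ punctured projective plane by suspension) together with one gadget imposing $\sum_{j\in S}[Z_j]=0$. However, two genuine gaps remain. First, the doubling gadget for $d\ge 3$ is exactly the technical crux you defer: the paper carries this out (Lemma \ref{Pdlemma}) not by triangulating a mapping cylinder but by suspending the $d$-dimensional gadget itself, which automatically transports the relation $2a=b$ up one dimension, and then coning over the two distinguished sphere boundaries with the suspension points so that the generators are again represented by \emph{simplex boundaries} with consistent (``coherent'') orientations; without some such device your inductive step has no proof, and the orientation bookkeeping you wave at (``the chain-level boundary is exactly $2[Z_i]-[Z_{i+1}]$'') is precisely where care is needed.

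Second, your summation gadget $D^{\ast}$ cannot have $O_d(1)$ vertices as you assert: its boundary is a sum of $k$ pairwise disjoint $(d-1)$-spheres with $k$ as large as $\log_2 m$, so it necessarily has $\Theta(k)$ facets and vertices. That by itself is compatible with the vertex count, but then the bounded-degree claim for $Y_m$ no longer follows from ``each gadget is $O_d(1)$''; you need a separate construction of a bounded-degree complex on $O_d(k)$ vertices whose $d$-chain group contains a chain with boundary exactly $\sum_{j\in S}[Z_j]$, with the $k$ spheres vertex-disjoint and nonadjacent (the latter is needed so the attachments stay simplicial). This is the content of the paper's Lemma \ref{sphere} and Lemma \ref{Y2lemma}: a bounded-degree triangulation of $S^d$ on $O_d(k)$ vertices with $2k$ disjoint nonadjacent facets, from which one deletes $k$ facets carrying coefficient $-1$ in the fundamental cycle so that the deleted boundaries are coherently oriented. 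Finally, your Smith-normal-form conclusion requires an argument that the gadgets contribute no further relations or torsion beyond the intended ones; the paper establishes this via Mayer--Vietoris applied to the attachments (Lemmas \ref{attachinglemma} and \ref{finalstep}), where the disjointness, induced-subcomplex, and coherent-orientation hypotheses are exactly what make the image of the connecting map computable. So the plan is the right one, but the two constructions you leave unspecified, plus the homological verification, constitute most of the actual proof.
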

Of course this lemma alone does not prove the upper bound in Theorem 1, but using this initial construction we will build a smaller complex which does. Towards explaining this ``reduction step" we introduce the following definition.
\begin{definition}
If $X$ is a simplicial complex with a coloring $c$ of $V(X)$ we define the \emph{pattern} of a face to be the multiset of colors on its vertices. If $c$ is a proper coloring, in the sense that no two vertices connected by an edge receive the same color, we define the \emph{pattern complex} $(X, c)$ to be the simplicial complex on the set of colors of $c$ so that a subset $S$ of the colors of $c$ is a face of $(X, c)$ if and only if there is a face of $X$ with $S$ as its pattern. It is easy to see that this is a simplicial complex. Indeed if $S$ is a set of colors which is a pattern for some face $\sigma$ of $X$ then for any $S' \subseteq S$, $S'$ is a pattern for some face of $\sigma$.
\end{definition}

The relevant fact about the pattern complex $(X, c)$ that we will use is the following lemma:

\begin{lemma}\label{reduce}
If $X$ is a $d$-dimensional simplicial complex and $c$ is a proper coloring of the vertices of $X$ so that no two $(d-1)$-dimensional faces of $X$ have the same pattern then the complex $(X, c)$ has $H_{d - 1}(X)_T \cong H_{d - 1}((X, c))_T$.
\end{lemma}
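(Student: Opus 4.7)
My plan is to exploit the simplicial map $\pi\colon X \to Y := (X,c)$ that sends each vertex of $X$ to its color. This map is well-defined on all faces because $c$ is proper, so any face's vertex set maps bijectively to its pattern. Let $\pi_*^k \colon C_k(X) \to C_k(Y)$ denote the induced chain map (with some fixed sign convention). The hypothesis that no two $(d-1)$-faces of $X$ share a pattern says that $\pi_*^{d-1}$ is injective on basis elements; conversely, every $(d-1)$-face of $Y$ is by definition the pattern of some face of $X$, which must itself be $(d-1)$-dimensional since $c$ is proper, so $\pi_*^{d-1}$ is also surjective on bases. Hence $\pi_*^{d-1}$ is an isomorphism of chain groups. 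The same definitional argument in top dimension shows that $\pi_*^d$ is at least surjective.

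Now identify $C_{d-1}(X) \cong C_{d-1}(Y)$ via $\pi_*^{d-1}$, and let $Z^X, Z^Y$ and $B^X, B^Y$ denote the respective $(d-1)$-cycle and $(d-1)$-boundary subgroups inside this common group. Surjectivity of $\pi_*^d$ together with the chain-map identity $\pi_*^{d-1}\partial_d^X = \partial_d^Y \pi_*^d$ yields $B^X = B^Y$. On the cycle side, the identity $\partial_{d-1}^Y \pi_*^{d-1} = \pi_*^{d-2}\partial_{d-1}^X$ immediately gives $Z^X \subseteq Z^Y$, and moreover the map $\partial_{d-1}^X$ descends to an injection
$$Z^Y/Z^X \hookrightarrow \ker\bigl(\pi_*^{d-2}\bigr) \subseteq C_{d-2}(X).$$
Since $C_{d-2}(X)$ is free abelian, any subgroup of it is free abelian, and in particular $Z^Y/Z^X$ is torsion-free.

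Combining these ingredients produces the short exact sequence
$$0 \longrightarrow H_{d-1}(X) \longrightarrow H_{d-1}(Y) \longrightarrow Z^Y/Z^X \longrightarrow 0$$
whose cokernel is torsion-free. A standard diagram chase (any torsion class in $H_{d-1}(Y)$ maps to $0$ in the torsion-free quotient, hence lifts uniquely to $H_{d-1}(X)$, where the first map is injective) then yields $H_{d-1}(X)_T \cong H_{d-1}(Y)_T$. The substantive step is the torsion-freeness of $Z^Y/Z^X$, and the point to watch carefully is that the hypothesis imposes no restriction on $(d-2)$-faces of $X$ with coincident patterns, so $\ker \pi_*^{d-2}$ is generally nontrivial; the argument succeeds precisely because this kernel still sits inside the free abelian group $C_{d-2}(X)$.
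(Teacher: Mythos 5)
Your proof is correct, but it takes a genuinely different route from the paper's. The paper works with the same simplicial map $f\colon X\to (X,c)$, yet argues topologically rather than at the chain level: it first notes that the hypothesis on $(d-1)$-faces forces distinct $d$-faces to have distinct patterns as well, so $f$ induces a homeomorphism $X/X^{(d-2)}\to (X,c)/(X,c)^{(d-2)}$, and then invokes the standard fact (Theorem 2.13 of Hatcher together with the long exact sequence of the pair $(X,X^{(d-2)})$) that collapsing the $(d-2)$-skeleton of a $d$-complex preserves the torsion of $H_{d-1}$. You instead verify everything by hand: $\pi_*^{d-1}$ is an isomorphism of chain groups and $\pi_*^d$ is surjective, giving $B^X=B^Y$; the quotient $Z^Y/Z^X$ embeds via $\partial_{d-1}^X$ into the free abelian group $C_{d-2}(X)$, so your short exact sequence has torsion-free cokernel, and the diagram chase identifying the torsion subgroups is exactly right. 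What your version buys is self-containment---no appeal to good pairs or relative homology---and an explicit accounting of why nothing need be assumed about coincident patterns on $(d-2)$-faces: their identifications land in $\ker\pi_*^{d-2}$, which only needs to sit inside a free abelian group. What the paper's version buys is brevity and a small bonus used later: the homeomorphism of quotients immediately gives $H_d((X,c))\cong H_d(X)$, which the cohomological remark in Section 6 relies on; your chain-level description yields the same fact, but only after additionally observing that $\pi_*^d$ is injective because distinct $d$-faces have distinct patterns.
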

\begin{proof}
Suppose that $V(X)$ is colored properly by $c$ with no two $(d-1)$-dimensional faces receiving the same pattern. We can define a simplicial map $f : X \rightarrow (X, c)$ sending each vertex $v$ to $c(v)$. Since no two $(d-1)$-dimensional faces receive the same pattern, we also have that no two $d$-dimensional faces receive the same pattern. Therefore $f$ induces a homeomorphism from $X/X^{(d - 2)}$ to $(X, c)/(X, c)^{(d - 2)}$. Now, taking the quotient of a $d$-dimensional CW-complex by its $(d - 2)$-skeleton preserves the torsion part of the $(d - 1)$st homology group. (This follows, for example, from theorem 2.13 from \cite{Hatcher}.) Thus $H_{d-1}(X)_T \cong H_{d - 1}(X/X^{(d - 2)})_T \cong H_{d - 1}((X, c)/(X, c)^{(d - 2)})_T \cong H_{d - 1}((X, c))_T$.
 \end{proof}

Thus the second step in the proof of Theorem 1 will be to show that there is a coloring of the vertices of the initial construction in a way that the pattern complex will be the final construction that we want. This is accomplished using the probabilistic method in proving the following lemma in Section \ref{final}.

\begin{lemma}\label{colorings}
Let $X$ be a $d$-dimensional simplicial complex, for $d \geq 2$, on $n$ vertices with $\Delta(X)\leq K - 1$ for some integer $K \geq 5$, then there exists a proper coloring $c$ of $V(X)$ having at most $18K^8d^6\sqrt[d]{n}$ colors so that no two $(d-1)$-dimensional faces of $X$ receive the same pattern by $c$.
\end{lemma}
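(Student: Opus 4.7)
The plan is to color the vertices of $X$ independently and uniformly at random from a palette of $N := 18K^8 d^6 \sqrt[d]{n}$ colors, and to apply the asymmetric Lov\'asz Local Lemma to show that with positive probability the resulting coloring is both proper and assigns distinct patterns to all $(d-1)$-faces. The bad events are: for each edge $e$, the event $A_e$ that both endpoints of $e$ receive the same color, with $\Pr[A_e] = 1/N$; and for each unordered pair $\{\sigma, \tau\}$ of distinct $(d-1)$-faces, the event $B_{\sigma, \tau}$ that the multisets of colors on $\sigma$ and on $\tau$ coincide. Setting $k := |\sigma \cap \tau|$ and using that the colors on $\sigma \setminus \tau$ and $\tau \setminus \sigma$ are i.i.d.\ uniform on disjoint vertex sets, one checks that $\Pr[B_{\sigma, \tau}] \leq (d-k)!/N^{d-k}$ for $0 \leq k \leq d-1$. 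Avoiding all bad events simultaneously gives the required coloring.

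Next I would analyze the dependency structure, which is controlled by $\Delta(X) \leq K-1$. Two events are independent whenever their supporting vertex sets are disjoint. An edge event $A_e$ has at most $2(K-1)$ edge-neighbors. For a pair event $B_{\sigma,\tau}$, each vertex $v \in \sigma \cup \tau$ lies in at most $K-1$ faces $\sigma'$; for each such $\sigma'$, the number of $\tau'$ with $|\sigma' \cap \tau'| = k' \geq 1$ is at most $\binom{d}{k'}(K-1)$, since the $k'$ shared vertices form a $(k'-1)$-face of $X$ and $\Delta(X) \leq K-1$. This bounds the number of pair-neighbors at intersection level $k' \geq 1$ by $4d\binom{d}{k'}(K-1)^2$. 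The delicate case is $k' = 0$: the number of disjoint pair-neighbors is only bounded by $4d(K-1)\cdot M \leq 4(K-1)^2 n$, where $M \leq n(K-1)/d$ is the total number of $(d-1)$-faces.

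To run the asymmetric LLL I would assign weights $x_{A_e} = 2/N$ and $y_k = e \cdot (d-k)!/N^{d-k}$, and use the estimate $1 - x \geq e^{-2x}$. Each LLL inequality then reduces to showing that a certain weighted sum of (neighbor count) times (neighbor weight) is $O(1)$. Summing the $k' \geq 1$ pair contributions yields a geometric series in $d/N$, bounded by $O(K^2 d^2/N)$, which is negligible once $N \gg d^2$. The contribution from $k' = 0$ is $y_0 \cdot O(K^2 n) = O(K^2 d! \cdot n/N^d)$; this is independent of $n$ because $N^d$ is proportional to $n$, and for the stated value of $N$ it is at most $O(K^2 d!/(18K^8 d^6)^d)$, absurdly small. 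The generous exponents $K^8$ and $d^6$ leave ample slack for every LLL inequality to hold for every $K \geq 5$, $d \geq 2$, and $n \geq 1$.

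The main obstacle is precisely the disjoint-pair case: there are up to $\Theta(n^2)$ disjoint pairs of $(d-1)$-faces globally and each $B_{\sigma, \tau}$ depends on $\Theta(n)$ of them, so no bound on dependency count uniform in $n$ is available and the symmetric LLL cannot be applied directly. The key observation, and the reason the asymmetric version of the LLL is essential, is that the probability of a disjoint-pair event is only $O(1/N^d)$; weighting it by $y_0 \propto 1/N^d$ makes $y_0 \cdot (\text{neighbor count})$ a constant precisely when $N^d$ is proportional to $n$. This scaling is exactly what forces the exponent $1/d$ on $n$ and matches the lower bound recalled after Theorem~1.
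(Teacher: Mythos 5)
Your proposal is correct, but it takes a different route from the paper's. The paper does not run one LLL over all bad events; instead it builds the coloring as a product $c=(c_1,c_2,c_3)$ of three independent colorings: $c_1$ is a greedy proper coloring of the $1$-skeleton with $K$ colors, $c_2$ kills coincident patterns among \emph{intersecting} $(d-1)$-faces via the symmetric LLL with $3d^5K^5$ colors, and $c_3$ kills coincident patterns among \emph{disjoint} $(d-1)$-faces via a second, separate application of the symmetric LLL with $6K^2d\sqrt[d]{n}$ colors; the palette sizes multiply to give $18K^8d^6\sqrt[d]{n}$. You instead color once from the full palette and handle edge events and pair events at every intersection level simultaneously with the asymmetric LLL, choosing weights $x_{A_e}=2/N$ and $y_k\propto (d-k)!/N^{d-k}$. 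The crucial point is the same in both arguments: for disjoint pairs the dependency count is $\Theta(K^2 n)$ while the event probability is $O(1/N^d)$, and these balance exactly when $N^d\sim n$ --- in the paper this shows up as $e\cdot\frac{1}{6^dK^{2d}n}(2K^2n+1)\le 1$ in the symmetric LLL, in your version as $y_0\cdot O(K^2n)=O(1)$. Your counting ($\binom{d}{k'}(K-1)$ faces through a fixed $(k'-1)$-face, $M\le n(K-1)/d$ total $(d-1)$-faces) and probability bounds are sound, and the slack in $18K^8d^6$ easily absorbs the weighted neighbor sums, including the cross terms (pair-neighbors of edge events and edge-neighbors of pair events) that your sketch only gestures at. What each approach buys: the paper's decomposition needs only the symmetric LLL and keeps each verification a one-line inequality, at the cost of multiplying three palettes together (hence the large constant $18K^8d^6$); your unified asymmetric-LLL argument is structurally cleaner, avoids the product blow-up, and with the same bookkeeping would in fact deliver the lemma with a noticeably smaller palette (roughly $O(K^2d\sqrt[d]{n})$ colors suffices), which is relevant to the paper's concluding remarks about improving $C_d$.
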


Now assuming Lemmas \ref{general} and \ref{colorings} we give the proof of the upper bound in Theorem 1.
\begin{proof}[Proof of the upper bound in Theorem 1]
Fix $d \geq 2$, and let $G$ be a finite abelian group. By Lemma \ref{general} there exists a constant $K \geq 5$ depending only on $d$ and a simplicial complex $X$ with $H_{d - 1}(X)_T \cong G$, $\Delta(X) \leq K - 1$, and $|V(X)| \leq K \log_2 |G|$. Now by Lemma \ref{colorings}, there is a coloring $c$ of the vertices of $X$ with at most $18K^8d^6 \sqrt[d]{K \log_2 |G|}$ colors so that no two $(d - 1)$-dimensional faces of $X$ receive the same pattern by $c$. Therefore by Lemma \ref{reduce}, $H_{d - 1}((X, c))_T \cong G$, and so
$$T_d(G) \leq |V((X, c))| \leq \frac{18K^{8 + d^{-1}} d^6}{\sqrt[d]{\log 2}} \sqrt[d]{\log |G|},$$
proving Theorem 1 with $C_d =  18K^{8 + d^{-1}} d^6/\sqrt[d]{\log 2}$.
 \end{proof}
\section{The Initial Construction} \label{init}
It is easy to see that Lemma \ref{general}, the first step in our proof of Theorem 1, is implied by the following special case:

\begin{lemma} \label{const}
For every $d \geq 2$ there exists a constant $K$ depending only on $d$ so that for every integer $m \geq 2$ there is a $d$-dimensional simplicial complex $X$ on at most $K \log_2 m$ vertices with $\Delta(X) \leq K-1$ and  $H_{d - 1}(X)_T$ isomorphic to $\Z/m\Z$.
\end{lemma}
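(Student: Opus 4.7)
The plan is to build $X$ as a chain of $k := \lceil \log_2 m\rceil$ copies of a bounded-size ``doubling gadget'' $D$, followed by a small ``combining + cone'' subcomplex that imposes a single homological relation encoded by the binary expansion $m=\sum_{i\in I}2^i$. I fix once and for all, depending only on $d$, a simplicial triangulation $\Sigma$ of $S^{d-1}$ on $O(1)$ vertices (for instance $\partial\Delta^d$), and a simplicial complex $D$, on $O(1)$ vertices and with $\Delta(D) = O(1)$, triangulating the mapping cylinder of a simplicial degree-$2$ self-map $f\colon \Sigma\to\Sigma$. The complex $D$ contains two disjoint copies $S_-,S_+$ of $\Sigma$ as subcomplexes, deformation retracts onto $S_+$, and hence has $H_{d-1}(D)\cong \Z$ torsion-free with $[S_-]=2[S_+]$. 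For $d=2$ this is the M\"obius band, matching Speyer's construction; for $d\ge 3$ such a $D$ can be constructed directly, for instance by suspending a lower-dimensional model and refining.

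Form $Y$ by gluing $D^{(1)},\dots,D^{(k)}$ along identifications $S_+^{(i)}=S_-^{(i+1)}$, and let $z_i$ denote this shared triangulated sphere, with $z_0 := S_-^{(1)}$ and $z_k := S_+^{(k)}$. Iterated Mayer--Vietoris (each pairwise intersection is $S^{d-1}$ and each $D^{(i)}$ is torsion-free in homology) shows that $H_{d-1}(Y)\cong \Z$ is torsion-free, generated by $[z_k]$, with $[z_{i-1}]=2[z_i]$, so $[z_j]=2^{k-j}[z_k]$ for every $j$. Next, introduce a ``combining gadget'' $C$: a bounded-size triangulation of the $d$-ball with two disjoint open sub-$d$-balls removed, each of its three boundary spheres a copy of $\Sigma$. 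Then $C$ is homotopy equivalent to $S^{d-1}\vee S^{d-1}$, again torsion-free in homology, and its outer boundary sphere is homologous (with appropriate orientations) to the sum of the two inner boundary spheres. Using $|I|-1$ copies of $C$ arranged as a binary tree glued onto $Y$, merge the cycles $\{z_{k-i}:i\in I\}$ into a single sphere $z^*$ with $[z^*]=\sum_{i\in I}2^i[z_k]=m[z_k]$, and finally cone $z^*$ off by attaching a triangulated $d$-ball with boundary $z^*$; call the resulting complex $X$.

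Each gadget uses $O(1)$ vertices, and the total number of gadgets is $k + (|I|-1) + 1 = O(\log m)$, so $|V(X)|\le K\log_2 m$ for some constant $K$ depending only on $d$; since each vertex lies in at most a bounded number of adjacent gadgets, $\Delta(X)\le K-1$ as well (enlarging $K$ if necessary so that also $K\ge 5$). A direct Mayer--Vietoris computation confirms that no gluing along $(d-1)$-spheres introduces torsion into $H_{d-1}$, so before the final cone $H_{d-1}$ remains free abelian, and the cone then adds precisely the relation $m[z_k]=0$, yielding $H_{d-1}(X)_T\cong \Z/m\Z$ as required.

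The main technical obstacle is the explicit construction of the doubling gadget $D$ with bounded parameters in every dimension $d\ge 3$. For $d=2$ it is classical, but for larger $d$ one must simplicially realize the mapping cylinder of a degree-$2$ self-map of $S^{d-1}$ on a bounded number of vertices without blowing up any face's local degree. A natural approach is to proceed inductively via suspensions or prism constructions from $D_2$, applying a bounded number of barycentric refinements to realize the simplicial degree-$2$ map while keeping both $|V(D)|$ and $\Delta(D)$ bounded by a function of $d$ alone. Once such a $D$ is in hand, the combining gadget $C$ and the cone are essentially routine, and the vertex-count and degree accounting immediately gives the lemma with a constant $K=K(d)$.
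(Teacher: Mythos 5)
Your overall architecture---a telescope of bounded-size doubling gadgets realizing the repeated-squaring relations $2\gamma_i=\gamma_{i+1}$, followed by a piece imposing the single relation read off from the binary expansion of $m$---is the same strategy as the paper's sphere-and-telescope construction. Your way of imposing the last relation (a binary tree of $|I|-1$ triangulated ``pair of pants'' gadgets capped by a cone) is a legitimate variant of the paper's single bounded-degree triangulation of $S^d$ with $k$ disjoint, nonadjacent facets removed, and it would even let you bypass the paper's Lemma \ref{sphere}, since each pants gadget has only $O_d(1)$ vertices and the degree bound is automatic. Your homological bookkeeping is also essentially right: each gluing along $(d-1)$-spheres identifies a primitive generator of the new piece, so no torsion appears before the final cone, and for $d=2$ the extra free summands do not affect the torsion part. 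You do leave implicit the orientation coherence needed so that the pants relation really comes out as a sum, and the verification that each gluing yields a simplicial complex (induced subcomplexes, injective attaching maps); the paper handles these via ``coherently represented'' simplex boundaries and an attaching lemma, and adding them to your argument would be routine.

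The genuine gap is the step you yourself defer: the existence, for every $d\ge 3$, of a doubling gadget $D$ with $O_d(1)$ vertices, bounded degree, $H_{d-1}(D)\cong\Z$, and---crucially---both ends $S_-,S_+$ copies of the \emph{same} fixed triangulation $\Sigma$ of $S^{d-1}$, so that copies can be chained end-to-end. This is precisely the content of the paper's Lemma \ref{Pdlemma}, and your suggested route does not obviously go through. A simplicial degree-two \emph{self}-map of a fixed finite triangulation $\Sigma$ cannot exist (a simplicial map sends each facet to at most one facet, while degree two forces at least $2|f_{d-1}(\Sigma)|$ facets in the source), so to make the map simplicial you must subdivide the source; but then the two boundary spheres of the mapping cylinder carry different triangulations and the identifications $S_+^{(i)}=S_-^{(i+1)}$ no longer make sense. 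Likewise, if you suspend a lower-dimensional gadget, the distinguished classes are represented by suspensions of simplex boundaries (bipyramids), not by copies of $\partial\Delta^d$. The paper repairs exactly this by adjoining two extra top-dimensional faces to the suspension (and correcting an orientation sign) so that the two classes become coherently represented by vertex-disjoint $d$-simplex boundaries; some argument of this kind is required in your proof, and without it the lemma is not established for $d\ge 3$.
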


\begin{proof}[Proof of Lemma \ref{general} from Lemma \ref{const}]
Fix $d$ and let $G = \Z/m_1 \Z \oplus \Z/m_2 \Z \oplus \cdots \oplus \Z / m_l \Z$ with $m_1 | m_2 | \cdots | m_l$ be an arbitrary finite abelian group. By Lemma \ref{const}, there is a constant $K$ so that for each $i \in [l]$ there exists $X_i$ so that $H_{d - 1}(X_i)_T \cong \Z/m_i \Z$ with $\Delta(X_i) \leq K-1$ and $|V(X_i)| \leq K \log_2(m_i)$. Let $X$ be the disjoint union of all the $X_i$. Clearly, $\Delta(X) \leq K-1$, $H_{d - 1}(X)_T \cong G$, and $|V(X)| \leq \sum_{i = 1}^l K \log_2(m_i) = K \log_2(m_1m_2 \cdots m_l) = K \log(|G|).$
 \end{proof}

The main purpose of this section and Section \ref{gencasesection} will be to prove Lemma \ref{const}. We will prove this theorem by giving an explicit construction which we call the sphere-and-telescope construction. The idea is to construct a space with a ``repeated squares presentation" of $\Z/m\Z$ as $H_{d - 1}(X)_T$. Given $m$, write its binary expansion as $m = 2^{n_1} + \cdots + 2^{n_k}$ with $0 \leq n_1 < n_2 < \cdots < n_k$, then $\Z/m\Z$ is given by the abelian group presentation 
$$\langle \gamma_0, \gamma_1, ..., \gamma_{n_k} \mid 2\gamma_0 = \gamma_1, 2\gamma_1 = \gamma_2, ..., 2\gamma_{n_k - 1} = \gamma_{n_k}, \gamma_{n_1} + \gamma_{n_2} + \cdots + \gamma_{n_k} = 0\rangle.$$ The goal is to construct a simplicial complex with this presentation as the presentation for $H_{d - 1}(X)_T$ so that each $\gamma_i$ is a homology class of $H_{d - 1}(X)$ represented by the boundary of a $d$-simplex $z_i$. This will be accomplished by constructing two simplicial complexes $Y_1$ and $Y_2$ and attaching them to one another to build $X$. The required properties of $Y_1$ and $Y_2$ will be that $H_{d - 1}(Y_1) \cong \langle \gamma_0, \gamma_1, ..., \gamma_{n_k} \mid 2 \gamma_0 = \gamma_1, ..., 2 \gamma_{n_k - 1}  = \gamma_{n_k} \rangle$, $H_{d - 1}(Y_2) \cong \langle \tau_1, \tau_2, ..., \tau_k \mid \tau_1 + \tau_2 + \cdots + \tau_k = 0 \rangle$, and that $Y_1$ and $Y_2$ may be attached to one another in such a way that at the level of $(d-1)$st homology $\gamma_{n_i}$ is identified to $\tau_i$ for all $i \in \{1, 2, ..., k\}$.\\

To illustrate the idea of the construction, Figure \ref{fig:space1} shows that topological space that we would build in the case that $m = 25$ and $d = 2$. The full triangulation is omitted, but it is the space we construct up to homeomorphism. On the righthand side of the figure we have the telescope portion of the construction. Each segment is a punctured projective plane, or equivalently the mapping cylinder for the degree 2 map from $S^1$ to $S^1$. With the labeling on each copy of the punctured projective plane, at the level of homology we have the relators $2 \gamma_0 = \gamma_1, 2\gamma_1 = \gamma_2, 2 \gamma_2 = \gamma_3$, and $2\gamma_3 = \gamma_4$ where $\gamma_0, ..., \gamma_4$ are homology classes each represented by an $S^1$. The lefthand side is the sphere portion of the construction, though in reality it is a multipunctured sphere. With the labeling on the cycles we have that the first homology group of this space is given by $\langle \gamma_0, \gamma_1, \gamma_3 \mid \gamma_0 + \gamma_3 + \gamma_4 = 0 \rangle$. According to the identifications of different copies of $S^1$ in the figure we get that the torsion part of the first homology group for this space is given by $\langle \gamma_0, ..., \gamma_4 \mid 2\gamma_0 = \gamma_1, ..., 2\gamma_3 = \gamma_4, \gamma_0 + \gamma_3 + \gamma_4 = 0 \rangle$ thus we have that the homology class $\gamma_i = 2^i \gamma_0$, and since $25 = 2^0 + 2^3 + 2^4$ we have that the torsion part of the homology group is $\Z/25\Z$. It is also worth pointing out that we do get three free homology classes by how we attach the segments of the telescope as handles to the sphere.

\begin{center}
\begin{figure}[h]
\centering
\tikzstyle{every node}=[]
\begin{centering}                        
\begin{tikzpicture}[thick,scale=1.5]
\begin{scope}[xshift = -2 cm, scale = 0.9]
\shade[ball color = gray, opacity = 0.8] (0, 0) circle (2cm);
\draw (0, 0) circle (2cm);
\draw(-2, 0) arc(180:360:2 and 0.5);
\draw(2, 0) [dashed] arc(0: 180: 2 and 0.5);
\draw[fill = white, opacity = 0.8](0.4, 1.6) arc(0:360:0.4 and 0.25);
\draw[fill = white, opacity = 0.8](-0.6, -1.4) arc(-60:-420:0.35 and 0.25);
\draw[fill = white, opacity = 0.8](0.8, -1) arc(0:-360:0.3 and 0.25);

\tikzstyle{every node} = [circle, draw, fill = black, inner sep = 0, minimum width = 4];
\draw[<-, opacity  = 1](0.4, 1.6) arc(0:360:0.4 and 0.25) node[midway]{};
\draw[<-, opacity = 1](-0.6, -1.4) arc(-60:-420:0.35 and 0.25) node[midway]{};
\draw[<-, opacity = 1](0.8, -1) arc(0:-360:0.3 and 0.25) node[midway]{};

\tikzstyle{every node}=[]
\draw (0.5, 1.35) node {$\gamma_0$};
\draw(-0.35, -1.5) node {$\gamma_3$};
\draw(0.9, -0.9) node {$\gamma_4$};
\end{scope}

\begin{scope}[xshift = 1 cm]

\tikzstyle{every node} = [circle, draw, fill = black, inner sep = 0, minimum width = 4];
\draw[fill = gray, opacity = 0.8] (0, 1.5) circle (0.4 cm);
%\draw (0, 1.1) node{};
%\draw (0, 1.9) node{};
\draw[fill = gray, opacity = 0.8] (0, 0.5) circle (0.4 cm);
%\draw (0, 0.1) node{};
%\draw (0, 0.9) node{};
\draw[fill = gray, opacity = 0.8] (0, -0.5) circle (0.4 cm);
%\draw (0, -0.1) node{};
%\draw (0, -0.9) node{};
\draw[fill = gray, opacity = 0.8] (0, -1.5) circle (0.4 cm);
%\draw (0, -1.1) node{};
%\draw (0, -1.9) node{};

\draw[fill = white, opacity = 1] (0, 1.5) circle (0.2 cm);
\draw[fill = white, opacity = 1] (0, 0.5) circle (0.2 cm);
\draw[fill = white, opacity = 1] (0, -0.5) circle (0.2 cm);
\draw[fill = white, opacity = 1] (0, -1.5) circle (0.2 cm);

\draw [->, opacity = 1] (-0.4, 1.5) arc(180:360:0.4) node [midway] {};
\draw [->, opacity = 1] (0.4, 1.5) arc(0:180:0.4) node [midway] {};

\draw [->, opacity = 1] (-0.4, 0.5) arc(180:360:0.4) node [midway] {};
\draw [->, opacity = 1] (0.4, 0.5) arc(0:180:0.4) node [midway] {};

\draw [->, opacity = 1] (-0.4, -0.5) arc(180:360:0.4) node [midway] {};
\draw [->, opacity = 1] (0.4, -0.5) arc(0:180:0.4) node [midway] {};

\draw [->, opacity = 1] (-0.4, -1.5) arc(180:360:0.4) node [midway] {};
\draw [->, opacity = 1] (0.4, -1.5) arc(0:180:0.4) node [midway] {};

\draw [->, opacity = 1] (0, 1.3) arc(-90:90:0.2);
\draw [->, opacity = 1] (0, 0.3) arc(-90:90:0.2);
\draw [->, opacity = 1] (0, -0.7) arc(-90:90:0.2);
\draw [->, opacity = 1] (0, -1.7) arc(-90:90:0.2);

\draw (0, 1.3) node {};
\draw (0, 0.3) node {};
\draw (0, -0.7) node {};
\draw (0, -1.7) node {};

\tikzstyle{every node}=[]
\draw (0.6, 1.5) node{$\gamma_0$};
\draw (-0.6, 1.5) node{$\gamma_0$};

\draw (0.6, 0.5) node{$\gamma_1$};
\draw (-0.6, 0.5) node{$\gamma_1$};

\draw (0.6, -1.5) node{$\gamma_3$};
\draw (-0.6, -1.5) node{$\gamma_3$};

\draw (0.6, -0.5) node{$\gamma_2$};
\draw (-0.6, -0.5) node{$\gamma_2$};

\draw (0, 1.5) node{$\gamma_1$};
\draw (0, 0.5) node{$\gamma_2$};
\draw (0, -0.5) node {$\gamma_3$};
\draw (0, -1.5) node {$\gamma_4$};
\end{scope}
\end{tikzpicture}
\caption{The topological space which we triangulate in our construction for $m = 25$}\label{fig:space1}
\end{centering}
\end{figure}
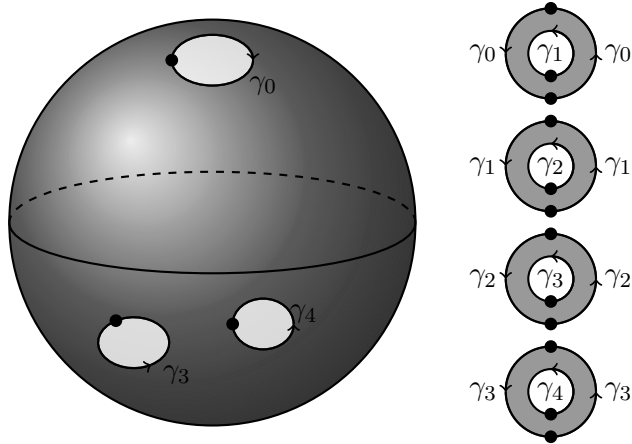
\end{center}

In higher dimensions the idea is exactly the same, but we increase the dimension of the building blocks appropriately. The new free homology classes created by attaching the telescope to the sphere will always occur in $H_1$, and so in fact in higher dimensions the $(d-1)$st homology group will be exactly the torsion group we want; there will be no free part.\\

In this section we prove Lemma \ref{const} in the $d = 2$ case. The full details for the construction in an arbitrary number of dimensions is given as Section \ref{gencasesection} however the $d = 2$ case illustrates the idea without having to get into all of the more technical details that are necessary in higher dimensions. Furthermore, a good understanding of the $d = 2$ case helps in understanding the general case.

\begin{unnumlemma}[Statement of Lemma \ref{const} for $d = 2$]\label{2case}
For every integer $m \geq 2$, there exists a simplicial complex $X$ so that $\Delta(X) \leq 34$, $|V(X)| \leq 50 \log_2 m$, and $H_1(X)_T \cong \Z/m\Z$. 
\end{unnumlemma}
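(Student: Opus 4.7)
The plan is to carry out the sphere-and-telescope construction as an explicit triangulation. Given $m \geq 2$, write its binary expansion $m = 2^{n_1} + 2^{n_2} + \cdots + 2^{n_k}$ with $0 \leq n_1 < n_2 < \cdots < n_k$, and note $n_k \leq \log_2 m$ and $k \leq \log_2 m + 1$. I would build two triangulated pieces $Y_1$ (the telescope) and $Y_2$ (the multi-punctured sphere), each carrying distinguished triangulated $S^1$'s to be glued together.

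For the telescope, first I would exhibit one constant-size simplicial Möbius band $B$ containing two disjoint subcomplex 3-cycles: the boundary cycle $\partial B$ and a core cycle $S \subset \Int(B)$ that generates $H_1(B) \cong \Z$, with $[\partial B] = 2[S]$. Taking $n_k$ labeled copies $B_1, \ldots, B_{n_k}$ and identifying $\partial B_i$ with $S(B_{i+1})$ vertex-for-vertex yields a simplicial complex $Y_1$ carrying triangulated 3-cycles $C_0 := S(B_1)$, $C_i := \partial B_i = S(B_{i+1})$ for $1 \leq i < n_k$, and $C_{n_k} := \partial B_{n_k}$. A direct cellular computation (or iterated Mayer--Vietoris) gives $H_1(Y_1) \cong \langle \gamma_0, \ldots, \gamma_{n_k} \mid \gamma_{i+1} = 2\gamma_i\rangle \cong \Z$, where $\gamma_i := [C_i]$. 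For $Y_2$ I would triangulate a 2-sphere with $k$ open triangles removed as a necklace of fixed-width triangular strips connecting $k$ vertex-disjoint boundary 3-cycles $D_1, \ldots, D_k$; this uses $O(k)$ vertices with every vertex in only boundedly many faces, and $H_1(Y_2)$ has presentation $\langle \tau_1, \ldots, \tau_k \mid \tau_1 + \cdots + \tau_k = 0\rangle$.

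Next I form $X := Y_1 \cup_\phi Y_2$ by identifying $D_j$ with the 3-cycle $C_{n_j}$ vertex-for-vertex for each $j$. Since every identification is between two matching 3-cycles with no filled triangle, $X$ is a genuine simplicial complex. A Mayer--Vietoris computation with $Y_1 \cap Y_2 = \bigsqcup_j D_j$ (equivalently, reading off cellular $H_1(X)$ directly) yields the presentation $\langle \gamma_0, \ldots, \gamma_{n_k} \mid \gamma_{i+1} = 2\gamma_i,\ \gamma_{n_1} + \cdots + \gamma_{n_k} = 0\rangle$. Substituting $\gamma_i = 2^i \gamma_0$ reduces the last relation to $m \gamma_0 = 0$, so $H_1(X) \cong \Z/m\Z \oplus \Z^{k-1}$ and in particular $H_1(X)_T \cong \Z/m\Z$.

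Finally, the vertex count: each new block contributes a constant number of new vertices (all but the 3 shared with the previous block), giving $|V(Y_1)| = O(n_k)$; the sphere contributes $O(k)$ new vertices; the gluings with $Y_2$ identify but do not add vertices. With care, the bound $|V(X)| \leq 50 \log_2 m$ follows. I expect the main obstacle to be the degree bound $\Delta(X) \leq 34$: at each glued 3-cycle $C_i$ (for $0 < i < n_k$) the local valence around a vertex is the sum of its valences from $B_i$ and $B_{i+1}$, and at the sphere attachments one must also add the contribution from $Y_2$. So the delicate part is exhibiting an explicit Möbius-band block $B$ and a sphere pattern $Y_2$ with sufficiently small local complexity that every such superposition stays within the budget of $34$; this dictates both the specific triangulation of $B$ (with 3-vertex boundary and 3-vertex core) and the numerical constants $34$ and $50$ appearing in the statement.
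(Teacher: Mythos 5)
Your plan is the paper's construction in all essentials: the paper's telescope block is exactly a triangulated M\"obius band (the icosahedral projective plane with one face removed --- $6$ vertices, $9$ triangles, boundary $3$-cycle glued to the core $3$-cycle of the next block), its $Y_2$ is a bounded-degree triangulation of $S^2$ with $k$ pairwise nonadjacent faces deleted, and the final computation is the same Mayer--Vietoris argument giving $H_1(X)\cong \Z^{k-1}\oplus\Z/m\Z$. The part you defer (``exhibiting an explicit M\"obius-band block and sphere pattern with small local complexity'') is precisely where the stated constants come from: the paper's block gives $\Delta(Y_1)\le 10$ with $|V(Y_1)|=3(n_k+1)$, its sphere (built by repeated bistellar $0$-moves) gives $\Delta(Y_2)\le 24$ with $|V(Y_2)|\le 14k+16$, whence $\Delta(X)\le 10+24=34$ and $|V(X)|\le 17\log_2 m+33\le 50\log_2 m$. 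So for the statement as written, with the numbers $34$ and $50$, the explicit blocks are not an optional refinement but the substance of the quantitative claim.

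There is one genuine gap you do not flag: orientation compatibility at the gluing. Identifying $D_j$ with $C_{n_j}$ ``vertex-for-vertex'' yields, in general, a relation $\epsilon_1\gamma_{n_1}+\cdots+\epsilon_k\gamma_{n_k}=0$ with signs $\epsilon_j\in\{\pm 1\}$ determined by whether the identification matches the orientation of the class $\tau_j$ in your presentation of $H_1(Y_2)$ with the positively oriented representative of $\gamma_{n_j}$. If any sign comes out wrong you get $\bigl(\sum_j\epsilon_j 2^{n_j}\bigr)\gamma_0=0$, which is not $m\gamma_0=0$; for instance $m=3$ with signs $+,-$ collapses the torsion entirely. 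The paper spends real effort here: it places $2k$ disjoint, nonadjacent candidate faces on the sphere, observes that in the fundamental $2$-cycle (all coefficients $\pm1$) at least $k$ of them carry coefficient $-1$, deletes only those, and orders/orients everything so that each $\tau_i$ is the positively oriented boundary of a deleted face and the gluing map is order-preserving on vertices; only then is the relation $\tau_1+\cdots+\tau_k=0$ with all plus signs, matching the coherently oriented $\gamma_{n_i}$. In your necklace version the fix is available --- orient the punctured sphere and check that the vertex-for-vertex identification carries the induced boundary orientation of each $D_j$ to the chosen orientation of $C_{n_j}$ --- but it must be stated and verified, not absorbed into ``a Mayer--Vietoris computation yields the presentation.''
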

\begin{proof}
Let $m$ be given. Write $m$ in its binary expansion $m = 2^{n_1} + 2^{n_2} + \cdots + 2^{n_k}$ with $0 \leq n_1 < n_2 < \cdots < n_k$. We first build the telescope portion of the construction. This is accomplished by attaching several copies of the standard triangulation of the punctured projective plane end-to-end (The triangulation is the one obtained by taking the triangulation of the projective plane obtained from antipodal identification on the icosahedron and removing a single face.) Explicitly the telescope portion of the construction, denoted $Y_1$, is the 2-dimensional simplicial complex with vertex set $v_0, v_1, v_2, v_3,, ..., v_{3n_k + 2}$ having as its facets:
\begin{eqnarray*}
\skel_2(Y_1) &=& \{[v_{3i}, v_{3i + 1}, v_{3i + 4}], [v_{3i + 1}, v_{3i + 2}, v_{3i + 4}], [v_{3i + 2}, v_{3i + 4}, v_{3i + 5}], [v_{3i}, v_{3i + 2}, v_{3i + 5}], \\
&& [v_{3i}, v_{3i + 1}, v_{3i + 5}], [v_{3i + 1}, v_{3i + 3}, v_{3i + 5}], [v_{3i + 1}, v_{3i + 2}, v_{3i + 3}], [v_{3i}, v_{3i + 2}, v_{3i + 3}], \\
&& [v_{3i}, v_{3i + 3}, v_{3i + 4}] \mid i = 0, 1, 2, ..., (n_k - 1)\}
\end{eqnarray*}
The faces of $Y_1$ are easier to see from a picture. Figure \ref{fig:projplane} shows a ``building block" of $Y_1$, the triangulated projective plane with a face removed. The full set of faces of $Y_1$ are the faces in Figure \ref{fig:projplane} as $i$ ranges over $\{0, 1, 2, ..., (n_k - 1)\}$. 
\begin{center}
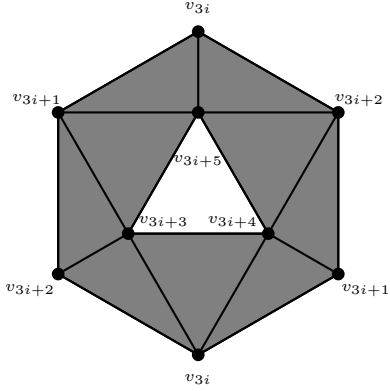
\begin{figure}[h]
\centering
\tikzstyle{every node}=[circle, draw, fill=black,
                        inner sep=0pt, minimum width=4pt]
\begin{centering}                        
\begin{tikzpicture}[thick,scale=2.15]
%\foreach \x in {0,72, ..., 288}
\draw[fill = gray] (90: 1) -- (150: 1) -- (210:1) -- (270:1) -- (330:1) -- (390: 1) -- cycle;
\draw[fill = white] (90:0.5) -- (210:0.5) -- (330:0.5) -- cycle;
\foreach \x in {90, 150, ..., 390}
{
%\draw (0,0)--(\x:1);
\draw (\x:1)--(\x+60:1);
}
%\draw[fill = gray, opacity = 0.1] (0, 0) -- (0:1) -- (60:1) -- cycle;
%\foreach \x in {0,72, ..., 288}

\foreach \x in {90, 210, 330}
{
	\draw (\x:0.5) -- (\x + 120:0.5);
	\draw (\x: 0.5) -- (\x: 1);
}

\foreach \x in {90, 210, 330}
{
	\draw(\x:0.5) -- (\x + 60: 1);
	\draw(\x:0.5) -- (\x - 60: 1);
}

\foreach \x in {90, 210, 330}
{
	\draw (\x:0.5) node{};
}
\foreach \x in {90, 150, ..., 390}
{
\draw(\x:1) node {};
}
\tikzstyle{every node}=[]

{
\tiny
\draw (90:1.15) node {$v_{3i}$};
\draw(150:1.15) node{$v_{3i + 1}$};
\draw(210:1.20) node{$v_{3i + 2}$};
\draw (270:1.15) node {$v_{3i}$};
\draw(330:1.20) node{$v_{3i + 1}$};
\draw(390:1.15) node{$v_{3i + 2}$};
}

{
\tiny
\draw (90:0.2) node {$v_{3i + 5}$};
\draw(220:0.28) node{$v_{3i + 3}$};
\draw(320:0.28) node{$v_{3i + 4}$};

}
%\draw(0,0) node{};

%\fill[gray] (0, 0) -- (0:1) -- (60:1) -- cycle;

\end{tikzpicture} 
\end{centering}
\caption{The building block for the telescope construction}\label{fig:projplane}
\end{figure}
\end{center}
If we order the vertices according to their natural ordering and let that ordering induce an orientation on all the edges and faces of $Y_1$ then letting $\gamma_i$ denote the 1-cycle of $Y_1$ represented by $[v_{3i}, v_{3i + 1}] - [v_{3i}, v_{3i + 2}] + [v_{3i + 1}, v_{3i + 2}]$ for $i \in \{0, 1, 2, ..., n_k\}$, we have that $2 \gamma_i - \gamma_{i + 1}$ is a 1-boundary of $Y_1$ for all $i \in \{0, ..., n_{k} - 1\}$. Now $H_1(Y_1)$ can be presented as $\langle \gamma_0, \gamma_1, ..., \gamma_{n_k} \mid 2\gamma_0 = \gamma_1, 2\gamma_1 = \gamma_2, ..., 2\gamma_{n_k - 1} = \gamma_{n_k} \rangle$. (There is a bit to do to check this; one way to check is by induction on $n_k$, if $n_k = 1$, then $Y_1$ is just the triangulated projective plane with 1 face removed. The inductive step follows from the Mayer--Vietoris sequence; full details for this are provided in Section \ref{Y1details}.) Note that $\Delta(Y_1) \leq 10$ and $|V(Y_1)| = 3(n_k + 1)$.\\

Next we construct a complex $Y_2$ which we will attach to $Y_1$ in a certain way to build our complex $X$. As the name of the construction and Figure \ref{fig:space1} suggest, $Y_2$ will be the ``sphere" portion of our construction. Indeed we will construct $Y_2$ by simply removing the right number of faces from a certain triangulation of $S^2$. However, we have to keep the degree of the complex bounded regardless of $m$, so we want to choose our triangulation of $S^2$ carefully. Toward that we use the following lemma to get a triangulation of $S^2$ from which we delete faces to obtain $Y_2$. We prove this as Lemma \ref{sphere} in Section \ref{Y2details}.

\begin{lemma}\label{2sphere}
For every $k \in \N$ there exists a triangulation $T = T(k)$ of $S^2$ so that $\Delta_{0, 1}(T) \leq 24$, $V(T) = 7k + 16$, and $T$ has $k$ 2-dimensional faces which are vertex-disjoint and have no edges from the vertices of one to the vertices of another.
\end{lemma}

To build $Y_2$, start by using Lemma \ref{2sphere} to get a triangulation of $S^2$, $T$ with $\Delta(T) \leq 24$ (this follows from the statement since $\Delta_{0, 1}(T) \leq 24$ and every edge is contained in exactly two faces), $|V(T)| \leq 7(2k) + 16$, and $2k$ 2-dimensional faces, $t_1$, $t_2$, ..., $t_{2k}$ which are vertex disjoint from one another and have no edges from the vertices of one to the vertices of another. Now assign an ordering to the vertices of $T$ and give the faces of $T$ the orientation induced by this ordering. Since $T$ is a triangulated 2-sphere there is a 2-chain $(x_1, ..., x_l)$ (where $l$ is the number of 2-dimensional faces of $T$) so that $|x_i| = 1$ for all $i$ and so that $\partial_2(x_1, ..., x_l) = 0$ where $\partial_2$ denotes the top-dimensional boundary matrix of $T$. Now without loss of generality at least $k$ of the faces $t_1,t_2, ..., t_{2k}$ have a coefficient of $-1$ in the 2-chain $x$. It follows that $k$ of these faces may be removed to create an oriented simplicial complex, which we call $Y_2$, which has $H_1(Y_2) = \langle \tau_1, \tau_2, ..., \tau_k \mid \tau_1 + \tau_2 + \cdots + \tau_k = 0 \rangle$ where each $\tau_i$ represents the positively-oriented boundary of a removed face. For each $i$, let $w_{3i}, w_{3i + 1}, w_{3i + 2}$ denote the vertices of the face boundary representing $\tau_i$. That is, $\tau_i$ is represented by the 1-cycle $[w_{3i}, w_{3i + 1}] - [w_{3i}, w_{3i + 2}] + [w_{3i + 1}, w_{3i + 2}]$ where $w_{3i} < w_{3i + 1} < w_{3i + 2}$ in the vertex ordering on $Y_2$.\\

Now $Y_1$ and $Y_2$ will be attached together in a particular way to build the complex $X$ which has $H_1(X)_T \cong \Z/m\Z$. Let $S$ denote the subcomplex of $Y_2$ induced by $w_3$, $w_4$, $w_5$, $w_6$, $...$, $w_{3k}$, $w_{3k + 1}$, $w_{3k + 2}$. Since the faces we deleted from $T$ to build $Y_2$ are vertex-disjoint and have no edges between any two of them, $S$ is a disjoint union of $k$ triangle boundaries. Let $f: S \rightarrow Y_1$ be the simplicial map defined by $w_{3i} \mapsto v_{3n_i}$, $w_{3i + 1} \mapsto v_{3n_i + 1}$, and $w_{3i + 2} \mapsto v_{3n_i + 2}$. Now let $X = Y_1 \sqcup_f Y_2$, that is, $Y_1$ with $Y_2$ attached along $S$ via $f$ (this is defined as a topological space in Chapter 0 of \cite{Hatcher} as the the quotient of the disjoint union of $Y_1$ and $Y_2$ by attaching each point $s \in S$ to its image $f(s)$ in $Y_1$). Since $f$ is injective and $S$ is an induced subcomplex of $Y_2$, $X$ is a simplicial complex (this is proved as Lemma \ref{attaching}). \\
 
 Now we use the Mayer--Vietoris sequence to show that $H_1(X) = \Z^{k - 1} \oplus \Z/m\Z$. From the Mayer--Vietoris sequence we have the following exact sequence:

\begin{center}
$\begin{CD}
H_{1}(S) @>h>>  H_1(Y_2) \oplus H_1(Y_1) @>g>> H_1(X) @>>>\tilde{H}_{0}(S) @>>> 0 \\
\end{CD}$\\
\end{center}
We claim that $(H_1(Y_1) \oplus H_1(Y_2)) / \Im(h) \cong \Z/m$. This follows since $f$ has the effect of identifying the 1-cycle $[w_{3i}, w_{3i + 1}] - [w_{3i}, w_{3i + 2}] + [w_{3i + 1}, w_{3i + 2}]$ to the 1-cycle $[v_{3n_i}, v_{3n_i + 1}] - [v_{3n_i}, v_{3n_i + 2}] + [v_{3n_i + 1}, v_{3n_i + 2}]$, that is $f$ identifies $\tau_i$ to $\gamma_{n_i}$. It follows that the image of $h$ is $\langle(\tau_i, -\gamma_{n_i})_{i = 1}^k\rangle$ therefore
\begin{eqnarray*}
(H_1(Y_1) \oplus H_1(Y_2)) / \Im(h) &\cong& \langle \gamma_0, \gamma_1, ..., \gamma_{n_k}, \tau_1, ..., \tau_k \mid 2\gamma_0 = \gamma_1, 2\gamma_1 = \gamma_2, ..., 2\gamma_{n_k - 1} = \gamma_{n_k}, \\
&&\tau_1 + \tau_2 + \cdots + \tau_k = 0, \gamma_{n_1} = \tau_1, ..., \gamma_{n_k} = \tau_k \rangle \\
&\cong& \langle \gamma_0, \gamma_1, ..., \gamma_{n_k} \mid 2\gamma_0 = \gamma_1, ..., 2\gamma_{n_k - 1} = \gamma_{n_k}, \gamma_{n_1} + \gamma_{n_2} + \cdots + \gamma_{n_k} = 0 \rangle \\
&\cong& \langle \gamma_0 | (2^{n_1} + 2^{n_2} + \cdots + 2^{n_k})\gamma_0 = 0 \rangle \\
&\cong& \Z/m\Z.\\
\end{eqnarray*}
Therefore the the image of $g$ is isomorphic to $\Z/m\Z$ and since $\tilde{H}_0(S)$ is free of rank $k - 1$, exactness implies that $H_1(X) \cong \Z^{k - 1} \oplus \Z/m\Z$.\\

Finally $\Delta(X) \leq \Delta(Y_1) + \Delta(Y_2) \leq 10 + 24 = 34$. Also, $k \leq \log_2 m + 1$ and $n_k \leq \log_2 m$, and therefore 
$|V(X)| \leq |V(Y_1)| + |V(Y_2)| \leq 3(n_k + 1) + 14k + 16 \leq 3(\log_2 m + 1) + 14(\log_2 m + 1) + 16 \leq 17 \log_2 m + 33 \leq 50 \log_2 m$. This completes the proof.
 \end{proof}

%As an example, Figure 3 shows the space $X$ that we triangulate in the proof of the above lemma. While we do not show the full triangulation in the figure (as it would have 394 vertices), the space in the figure is the space that we build up to homeomorphism. The figure also illustrates how $Y_1$ (right), which we call the telescope part of the construction, is built out of the building blocks in Figure 2 and how it intersects a multipunctured sphere $Y_2$ in a way that gives the $\Z/25\Z$ as the torsion subgroup in the first homology group. Indeed, at the level of homology $Y_1$ gives the relators $2 \gamma_0 = \gamma_1$, $2 \gamma_1 = \gamma_2$, $2 \gamma_2 = \gamma_3$, and $2 \gamma_3 = \gamma_4$. For the punctured sphere $Y_2$ we see that $\gamma_0 + \gamma_3 + \gamma_4$ is null-homologous. Thus we have that $\gamma_0 + 2^3 \gamma_0 + 2^4 \gamma_0 = 0$, that is $25 \gamma_0 = 0$. \\

We credit a portion of the construction in the $d = 2$ case of Lemma \ref{const} to Speyer \cite{Speyer}. Indeed the construction in \cite{Speyer} includes the same telescope portion that we have here. However, the two constructions vary in how they add the relator $\gamma^{n_1} + \gamma^{n_2} + \cdots + \gamma^{n_k} = 0$ to the first homology group. We use a different method for our construction in order to guarantee bounded degree. \\

The construction to prove Lemma \ref{const} in higher dimensions is similar. Given $m = 2^{n_1} + \cdots + 2^{n_k}$ with $\Z/m\Z \cong \langle \gamma_0, \gamma_1, ... \gamma_{n_k} \mid 2\gamma_0 = \gamma_1, 2 \gamma_1 = \gamma_2, ..., 2 \gamma_{n_k - 1} = \gamma_{n_k}, \gamma_{n_1} + \cdots + \gamma_{n_k} = 0 \rangle$, our goal is a simplicial complex $X$ where each $\gamma_i$ is represented by a positively-oriented $d$-simplex boundary all of which are disjoint from one another, so that there is a $d$-chain with $2 \gamma_i = \gamma_{i + 1}$ as its boundary for each $i \in \{0, 1, ..., n_{k-1}\}$, and a $d$-chain with $\gamma_{n_1} + \cdots + \gamma_{n_k}$ as its boundary.

\section{The Final Construction}\label{final}
In this section we use the probabilistic method, in particular the Lov\'{a}sz Local Lemma, to show how to finish the proof of Theorem 1 by proving Lemma \ref{colorings}; we save the details of the initial construction when $d > 2$ for Section \ref{gencasesection}. We begin by stating the Lov\'{a}sz Local Lemma as it is stated in Chapter 5 of \cite{AS}.
\begin{lovaszlemma}[\cite{EL}]
Let $A_1, A_2, ..., A_n$ be events in an arbitrary probability space. Suppose that each event $A_i$ is mutually independent of all the other events $A_j$ but at most $t$, and that $\Pr[A_i] \leq p$ for all $1 \leq i \leq n$. If $ep(t+1) \leq 1$ then $\Pr[\bigwedge_{i = 1}^n \overline{A_i}] > 0$
\end{lovaszlemma}
We now prove Lemma \ref{colorings}. In proof, we implicitly treat $\sqrt[d]{n}$ as an integer, when really we mean $\lceil \sqrt[d]{n} \rceil$.

\begin{proof}[Proof of Lemma \ref{colorings}]
We will find three colorings $c_1, c_2, c_3$ of $V(X)$ so that $c_1$ is a proper coloring of $X^{(1)}$, $c_2$ has no pair of intersecting $(d-1)$-dimensional faces receiving the same pattern, and $c_3$ has no pair of disjoint $(d-1)$-dimensional faces receiving the same pattern. We then let $c = (c_1, c_2, c_3)$, and $c$ will have the required properties with $|c| = |c_1||c_2||c_3|$ and we will show that this is bounded by the value in the statement. Finding $c_1$ is easy; there is a proper coloring of $X^{(1)}$ by at most $K$ colors since the vertex degree is bounded above by $K - 1$, choose such a proper coloring for $c_1$. To show that $c_2$ and $c_3$ exist, we will use the Lov\'{a}sz Local Lemma.\\

 We first show that there exists a coloring $c_2$ on at most $3d^5K^5$ colors so that no pair of intersecting $(d-1)$-dimensional faces receive the same pattern by $c_2$. We may define a graph $H$ from $X$ by letting the vertices of $H$ be the $(d-1)$-dimensional faces of $X$ with $(\sigma, \tau) \in E(H)$ if and only if $\sigma \cap \tau \neq \emptyset$, a coloring of $V(X)$ induces a coloring of $H$ by the patterns on the $(d-1)$-dimensional faces. We wish to show that there is a coloring of $V(X)$ by at most $3d^5K^5$ colors which induces a proper coloring on $H$. Consider the probability space of colorings of $V(X)$ by coloring each vertex uniformly at random from among a set of $3d^5K^5$ colors. For a fixed $(d-1)$-dimensional face $\sigma$, let $A_{\sigma}$ denote the event that some neighbor of $\sigma$ in $H$ receives the same pattern as $\sigma$. \\

By the bounded degree condition on $X$, for a $(d-1)$-dimensional face $\sigma$ the number of faces $\tau$ so that $(\sigma, \tau) \in E(H)$ is at most $dK$. Now $\sigma$ and $\tau$ are most likely to receive the same pattern if they share $(d-1)$ vertices, and so the probability that $\sigma$ and $\tau$ receive the same pattern is at most $((d - 1)/(3d^5K^5))^{d - 1}$ (This is an upper bound on the probability that the same multiset of colors used for $\sigma$ are used for $\tau$ given that $\sigma$ and $\tau$ meet at a single vertex); this probability is bounded above by $1/(3d^5K^5)$. Thus the probability of $A_{\sigma}$ is bounded above by $dK/(3d^5K^5)$ by a union bound over the neighbors of $\sigma$. We also observe that if the distance between $\sigma$ and $\tau$ in $H$ is at least 4, then $A_{\sigma}$ and $A_{\tau}$ are mutually independent as they are on disjoint vertex sets. Indeed if $A_{\sigma}$ and $A_{\tau}$ are not independent then some vertex $v$ must appear both in a face $\tau'$ in the closed neighborhood of $\tau$ in $H$ and in some face $\sigma'$ in the closed neighborhood of $\sigma$ in $H$. Thus there is a path $\sigma, \sigma', \tau', \tau$ of length at most 3 from $\sigma$ to $\tau$. It follows that each $A_{\sigma}$ is mutually independent from all $A_{\tau}$ but at most $(dK)^4$. Thus we may apply the Lov\'{a}sz Local Lemma since $$e\frac{dK}{3d^5K^5}(d^4K^4 + 1) \leq 3d^5K^5/(3d^5 K^5) = 1.$$ It follows that there is a coloring so that no $A_{\sigma}$ holds, i.e. a coloring so that no two intersecting $(d-1)$-dimensional faces receive the same pattern. Let $c_2$ be one of these colorings. \\

We now handle the disjoint $(d-1)$-dimensional faces, again using the Lov\'{a}sz Local Lemma. Consider the probability space of colorings of $V(X)$ by coloring each vertex uniformly at random from among a set of $6K^2d \sqrt[d]{n}$ colors. For $\sigma$ and $\tau$ disjoint $(d - 1)$-dimensional faces, let $A_{(\sigma, \tau)}$ denote the event that $\sigma$ and $\tau$ receive the same pattern. We have that 
$$\Pr(A_{\sigma, \tau}) \leq \frac{d^d}{6^d K^{2d}d^d n} = \frac{1}{6^dK^{2d} n}.$$
As in the case for $\sigma \cap \tau \neq \emptyset$, the bound on the probability above comes from an upper bound on the probability that the same multiset of colors used for $\sigma$ are used for $\tau$. Now for $(\sigma, \tau)$ and $(\sigma', \tau')$, $A_{(\sigma, \tau)}$ and $A_{(\sigma', \tau')}$ are mutually independent if the two pairs are on disjoint vertex sets. It follows that for any $(\sigma, \tau)$ the number of $(\sigma', \tau')$ so that $A_{(\sigma, \tau)}$ is not disjoint from $A_{(\sigma', \tau')}$ is at most $2(dK (K/d) n)$ (pick one of the at most $dK$ faces adjacent to $\sigma$ for $\sigma'$ and then any other of the at most $(K/d) n$ faces for $\tau$, then reverse the roles of $\sigma$ and $\tau$). Thus the Lov\'{a}sz Local Lemma may be used since 
$$e \frac{1}{6^dK^{2d}n} (2K^2 n + 1) \leq \frac{6K^2 n}{6K^2 n} = 1.$$ 
It follows that there is a coloring by at most $6K^2d\sqrt[d]{n}$ colors so that no disjoint $(d-1)$-dimensional faces receive the same pattern, let $c_3$ be such a coloring. We let $c = (c_1, c_2, c_3)$ then $c$ is on at most $K(3d^5K^5)(6K^2d \sqrt[d]{n}) = 18K^8d^6 \sqrt[d]{n}$ colors and has the required properties.
 \end{proof}
\section{Full proof of Lemma \ref{const}}\label{gencasesection}
The goal of this section will be to prove Lemma \ref{const} for any dimension $d \geq 2$.  Given $m$, write its binary expansion as $m = 2^{n_1} + \cdots + 2^{n_k}$, then $\Z/m\Z$ is given by the abelian group presentation $\langle \gamma_0, \gamma_1, ..., \gamma_{n_k} \mid 2\gamma_0 = \gamma_1, 2\gamma_1 = \gamma_2,..., 2\gamma_{n_k - 1} = \gamma_{n_k}, \gamma_{n_1} + \gamma_{n_2} + \cdots + \gamma_{n_k} = 0\rangle$. The goal is to construct a simplicial complex $X$ with this presentation as the presentation for $H_{d - 1}(X)_T$ which has degree bounded by a constant $K -1$ depending only on $d$ and at most $K \log_2 m$ vertices. This is accomplished by the following series of steps for any dimension $d \geq 2$.
\begin{enumerate}
\item Show that there exists a simplicial complex $P = P(d)$ so that $H_{d - 1}(P) = \langle a, b \mid 2a = b \rangle$ with each of the homology classes $a$ and $b$ represented by an embedded, positively-oriented copy of $\partial \Delta^d$ which are vertex disjoint from one another.
\item Show that there is a constant $L = L(d)$ so that for any integer $k \geq 0$ there is a triangulation $T$ of $S^d$ so that $\Delta(T) \leq L$, $|V(T)| \leq Lk$, and with $k$ $d$-dimensional faces $t_1, ..., t_k$ which are vertex-disjoint and nonadjacent. Nonadjacent in this context means that for $i \neq j$, there are no edges between the vertices of $t_i$ and the vertices of $t_j$.
\item Prove Lemma \ref{const} for $K = \max\{2\Delta(P) + L + 1,2|V(P)| + 4L \}$ by giving an explicit construction for $m \geq 2$, with $m = 2^{n_1} + 2^{n_2} + \cdots + 2^{n_k}$, in the following series of steps:
\begin{enumerate}
\item Attach $n_k$ copies of $P$ together to create a complex $Y_1$ with $\Delta(Y_1) \leq 2\Delta(P)$ and $H_{d - 1}(Y_1) = \langle \gamma_0, \gamma_1, ..., \gamma_{n_k} \mid 2\gamma_0 = \gamma_1, 2\gamma_1 = \gamma_2, ..., 2\gamma_{n_k - 1} = \gamma_{n_k}\rangle$ where each $\gamma_i$ is represented by an embedded, positively-oriented copy of $\partial \Delta^d$, $Z_i$ which are all vertex disjoint from one another. We refer to $Y_1$ as the telescope part of the construction.
\item Use step 2 above to construct a complex $Y_2$, which is a triangulation of $S^d$ with $k$ vertex-disjoint, nonadjacent, $d$-dimensional faces removed, with $\Delta(Y_2) \leq L$ and $|V(Y_2)| \leq 2Lk$. This complex will have $H_{d - 1}(Y_2) = \langle \tau_1, \tau_2, ..., \tau_k \mid \tau_1 + \tau_2 + \cdots + \tau_k = 0 \rangle$ where each $\tau_i$ is represented by an embedded, positively-oriented copy of $\partial \Delta^d$, $Z_i'$ so that $Z_1', ..., Z_k'$ are vertex-disjoint and nonadjacent. We refer to $Y_2$ as the sphere part of the construction.
\item Attach $Y_1$ to $Y_2$ by attaching $Z_{n_i}$ to $Z_i'$ for every $i \in \{1, ..., k\}$ in a way that identifies $\gamma_{n_i}$ to $\tau_i$ so that we get a simplicial complex $X$ which has 
\begin{eqnarray*}
H_{d - 1}(X)_T &\cong& \langle \gamma_0, \gamma_1, ..., \gamma_{n_k}, \tau_1, ..., \tau_k \mid 2\gamma_0 = \gamma_1, 2\gamma_1 = \gamma_2, ..., 2\gamma_{n_k - 1} = \gamma_{n_k}, \\
&&\tau_1 + \tau_2 + \cdots + \tau_k = 0, \gamma_{n_1} = \tau_1, ..., \gamma_{n_k} = \tau_k \rangle \\
&\cong& \langle \gamma_0, \gamma_1, ..., \gamma_{n_k} \mid 2\gamma_0 = \gamma_1, ..., 2\gamma_{n_k - 1} = \gamma_{n_k}, \gamma_{n_1} + \gamma_{n_2} + \cdots + \gamma_{n_k} = 0 \rangle \\
&\cong& \Z/m\Z,\\
\end{eqnarray*}
 and for which $$\Delta(X) \leq \Delta(Y_1) + \Delta(Y_2) \leq 2\Delta(P) + L,$$ and 
$$V(X) \leq V(Y_1) + V(Y_2) \leq V(P)n_k + 2Lk \leq (2V(P) + 4L) \log_2m.$$
\end{enumerate}
\end{enumerate}

\subsection{The attaching maps}
As the steps above indicate the construction will be built in pieces which will be attached to one another in a way that the final complex $X$ will have $H_{d - 1}(X)_T \cong \Z/m\Z$. Of course we will need to be careful in how we attach simplicial complexes to one another so that the resulting space is still a simplicial complex. We will be attaching our building blocks to each other in a fairly standard way. Given two topological spaces $A$ and $B$, a subspace $S \subseteq B$, and a map $f: S \rightarrow A$, the space $A \sqcup_f B$ may be defined by taking the quotient of the disjoint union of $A$ and $B$ by the equivalence relation $s \sim f(s)$ for all $s \in S$. This type of attaching is called attaching $B$ to $A$ along $S$ via $f$ and is described in, for example, chapter 0 of \cite{Hatcher}. In the case where $A$ and $B$ are simplicial complexes, $S$ is a set of simplex boundaries contained in $B$, and $f$ is a simplicial map, attaching $B$ to $A$ along $S$ via $f$ is similar to a connected sum of simplicial complexes (see for example \cite{MatsumuraMoore}). In the case of attaching $Y_1$ to $Y_2$ our attachments will be similar to multiple handle additions (see for example \cite{Walkup}). Indeed, adding these handles creates free homology classes in $H_1$ in our construction as we mention in Section \ref{init}. \\

The main difference between our present setting and the usual simplicial connected sum of $A$ and $B$ along $S$, is that $S$ will be a collection of $d$-simplex boundaries, rather than a collection of $d$-simplicies whose interiors we delete after attachment, and that we must be careful with orientations in order to easily compute the homology of $A \sqcup_f B$ from that of $A$ and $B$. In the present setting, we will have that $A$ and $B$ are finite simplicial complexes, $S$ is an induced subcomplex of $B$, and $f : S \rightarrow A$ will be a simplicial injection, and thus a homeomorphism onto is image. We first verify that in this case $A \sqcup_f B$ remains a simplicial complex.
\begin{lemma}\label{attaching}
Let $A$ and $B$ be simplicial complexes, with $S_1$ a subcomplex of $A$ and $S_2$ an induced subcomplex of $B$, so that there is a simplicial homeomorphism $f: S_2 \rightarrow S_1$. Then $A \sqcup_f B$ is a simplicial complex.
\end{lemma}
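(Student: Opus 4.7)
The plan is to construct $A \sqcup_f B$ directly as an abstract simplicial complex $C$, and then verify that its geometric realization coincides with the topological quotient. Define the vertex set of $C$ to be $V(A) \sqcup (V(B) \setminus V(S_2))$, and let $g : V(B) \to V(C)$ be the map sending $v$ to $f(v)$ if $v \in V(S_2)$ and to $v$ otherwise. The faces of $C$ are declared to be the faces of $A$ together with the images $g(\tau)$ for $\tau$ a face of $B$. Then $C$ is the candidate simplicial structure for $A \sqcup_f B$.

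The verification has two pieces: first, that the collection of faces of $C$ really is a simplicial complex (closed under taking nonempty subsets and each face being a set of vertices, not a multiset); second, that its geometric realization is homeomorphic to $A \sqcup_f B$. For the first piece, closure under subsets is immediate from closure for $A$ and $B$. The subtler point is that $g$ must send each face $\tau \in B$ injectively to a set of vertices of the same cardinality as $\tau$, which holds because $f$ is a simplicial homeomorphism (hence injective on $V(S_2)$) and vertices of $\tau \setminus V(S_2)$ remain distinct under $g$.

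The main obstacle, and where the induced subcomplex hypothesis enters, is ruling out ``phantom collisions'' in which a face of $A$ and a face of $B$ produce the same subset of $V(C)$ but arise from genuinely different simplices of the disjoint union. Concretely, suppose $\sigma \in A$ and $\tau \in B$ satisfy $\sigma = g(\tau)$. Since $\sigma \subseteq V(A)$, every vertex of $\tau$ must lie in $V(S_2)$ (otherwise $g(\tau)$ would contain a vertex of $V(B) \setminus V(S_2)$, disjoint from $V(A)$). Because $S_2$ is induced in $B$, the face $\tau$ then actually belongs to $S_2$, so $g(\tau) = f(\tau)$ is a face of $S_1 \subseteq A$; since $A$ is a simplicial complex, $\sigma$ and $f(\tau)$ having the same vertex set forces $\sigma = f(\tau)$. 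Thus collisions only occur for pairs already identified by $f$, and $C$ is well defined.

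Finally, one observes that $g$ together with the inclusion $V(A) \hookrightarrow V(C)$ defines a continuous surjection from the disjoint union of the geometric realizations $|A| \sqcup |B|$ onto $|C|$ which identifies precisely the pairs $s \sim f(s)$ for $s \in |S_2|$. By the universal property of the topological quotient, this descends to a continuous bijection $A \sqcup_f B \to |C|$, which is a homeomorphism because $A$ and $B$ (and hence their disjoint union) are compact and $|C|$ is Hausdorff. Therefore $A \sqcup_f B$ carries the structure of the simplicial complex $C$, completing the proof.
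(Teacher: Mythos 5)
Your construction is correct and is essentially the paper's argument: the paper realizes $A \sqcup_f B$ as the pattern complex of $A \sqcup B$ under a coloring that gives each vertex of $S_1$ the color of its unique $f$-preimage and every other vertex a unique color, and that pattern complex is precisely your complex $C$, with the inducedness of $S_2$ used in the same way to rule out unintended collisions between faces of $A$ and images of faces of $B$. Your closing point-set verification that $|C|$ is homeomorphic to the topological quotient (continuous bijection from a compact space to a Hausdorff one) is a detail the paper leaves implicit rather than a different route.
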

\begin{proof}
As $f$ is a simplicial homeomorphism whose domain is an induced subcomplex of $B$ we have that $A \sqcup_f B$ may be realized as a pattern complex (as defined in Section 2 above) with respect to a certain coloring of $A \sqcup B$. First color every vertex of $B$ uniquely. Then color each $w \in S_1$ with the same color used for its unique preimage under $f$ in $S_2$. Finally color all the vertices in $A \setminus S_1$ uniquely. Now any two faces which receive the same pattern are identified together by $f$ since $S_2$ is an induced subcomplex and $f$ is determined by its image on vertices. Thus $A \sqcup_f B$ is the pattern complex of $A \sqcup B$ with respect to this coloring we have described. So it is a simplicial complex. 
 \end{proof}

The attaching maps used in the construction will always satisfy the assumptions of Lemma \ref{attaching}; thus at each step in the process the construction will be a simplicial complex. Furthermore, by attaching two simplicial complexes $A$ and $B$ in the way described in Lemma \ref{attaching} will result in a way to express $A \sqcup_f B$ as a union of two subspaces one of which is homotopy equivalent to $A$ and one of which is homotopy equivalent to $B$ with their intersection being homotopy equivalent to $S_2$. Thus we will simplify notation slightly and write $A \cup B$ for $A \sqcup_f B$ and $A \cap B$ for $S_2 \subseteq A \sqcup_f B$ when it is clear which simplicial homeomorphism $f: S_2 \rightarrow S_1$ we are using, especially as it relates to using the Mayer--Vietoris sequence to compute the homology of $A \sqcup_f B$ from the homology of $A$ and $B$. \\

While this process of attaching $A$ and $B$ is well-defined and gives us the perfect setting to use the Mayer--Vietoris sequence, there is still one issue: the orientation of the faces of $A \cup B$. To compute the homology of $A$ and the homology of $B$ we are required to choose an orientation on each face of $A$ and each face of $B$. When we attach $B$ to $A$ along $S_2$ by $f$ we get a new simplicial complex $A \cup B$. Therefore it is necessary to choose orientations of the faces of $A \cup B$ in order to compute its homology, and in particular it is necessary to make a decision for the orientation of faces of $A \cap B$. While the choice of orientations does not affect the homology groups of a simplicial complex up to group isomorphism, it may not be easy to compute the homology groups of $A \cup B$ from the homology groups of $A$ and the homology groups of $B$ if the orientations chosen for $A \cap B$ cannot be made to match the initial orientations assigned to those faces in $A$ and in $B$. Towards addressing this issue in the construction we give the following definition:
\begin{definition}
Let $A$ be an oriented simplicial complex. For $S$ a subcomplex of $A$, we say that $S$ is \textit{ coherently ordered with respect to the orientation on $A$} (or just \textit{coherently ordered} when $A$ and its orientation are clear from context) provided that there is an ordering $v_1, ..., v_k$ of the vertices of $S$ so that the orientation of each face of $S$ induced by this ordering on the vertices matches its orientation in $A$. We say that the ordering $v_1, ..., v_k$ is a \textit{coherent ordering of $S$}. 
\end{definition}

%The purpose of this definition is the observation that if $A$ and $B$ are oriented simplicial complexes with $S_1 \subseteq A$ and $S_2$ an induced subcomplex of $B$ so that $v_1, ..., v_k$ is a coherent ordering of $S_2$ and $w_1, ..., w_k$ is a coherent ordering of $S_1$ with $f: S_2 \rightarrow S_1$ a simplicial homeomorphism given by $v_i \mapsto w_i$ for $i \in [k]$, then the faces of $A \cup B$ are assigned orientations which agree on $A$ and on $B$ with the initial orientations assigned to those complexes before attaching them along $S_2$ via $f$. \\

Primarily, this definition will be used when we attach $d$-dimensional simplicial complexes $A$ and $B$ to one another so that some element of $H_{d - 1}(A)$ is identified to some element of $H_{d - 1}(B)$ in a way that makes $H_{d - 1}(A \cup B)$ easy to compute using the Mayer--Vietoris sequence. Towards that goal we define the following:

\begin{definition}
Let $A$ be an oriented simplicial complex, and let $d$ be a positive integer. We say that $\gamma \in H_{d - 1}(A)$ is \textit{coherently represented by the $d$-simplex boundary $Z$} provided that $Z$ is an embedded $d$-simplex boundary in $A$ which is induced (i.e. the interior $d$-dimensional face is not in $A$), and which has a coherent ordering $v_0, ..., v_d$, so that $\gamma$ is homologous to the $(d-1)$-cycle:
$$\sum_{i = 0}^d (-1)^i [v_0, v_1, .., \hat{v_i}, ..., v_d].$$
\end{definition}

Now, all of the attachments between building blocks in the construction will be along simplex boundaries which coherently represent cycles in homology.  It is worth mentioning here that a coherent ordering on a $d$-simplex boundary is necessarily unique when $d \geq 2$ (and irrelevant if $d = 1$). Thus we will not encounter any issues with having to choose some coherent ordering. With this definition we are ready to state and prove the main attaching lemma:
\begin{lemma}\label{attachinglemma}
Let $d$ and $t$ be positive integers with $d \geq 2$. Let $A$ and $B$ be connected, oriented simplicial complexes. Let $\gamma_1, ..., \gamma_t \in H_{d-1}(A)$ be coherently represented by the $d$-simplex boundaries $Z_1, Z_2, ..., Z_t$ respectively with $Z_i \cap Z_j = \emptyset$ for all $i \neq j$. Let $\tau_1, ..., \tau_t \in H_{d-1}(B)$ be coherently represented by the $d$-simplex boundaries $Z_1', ..., Z_t'$ respectively with $Z_i' \cap Z_j' = \emptyset$ and no edges between the vertices of $Z_i'$ and the vertices of $Z_j'$ for every $i \neq j$. For each $i$, let $v_0^i, v_1^i, ..., v_d^i$ denote the coherent ordering for $Z_i$ and $w_0^i, w_1^i, ..., w_d^i$ denote the coherent ordering for $Z_i'$. Let $f : \bigsqcup_{i = 1}^{t} Z_i' \rightarrow \bigsqcup_{i = 1}^{t} Z_i$ be the simplicial map defined by $w_j^i \mapsto v_j^i$ for all $i \in \{1, .., t\}$ and $j \in \{0, ..., d\}$. Then $A \sqcup_f B$ is a simplicial complex and $$H_{d-1}(A \sqcup_f B) \cong (H_{d-1}(A) \oplus H_{d-1}(B)) / \langle (\gamma_i, -\tau_i)_{i \in [t]} \rangle \oplus \tilde{H}_{d - 2}\left(\bigsqcup_{i = 1}^{t} S^d\right).$$

%Suppose that $S_2$ is an induced subcomplex of $B$ which is homeomorphic to a disjoint union of $t$ $d$-dimensional spheres and which has a coherent ordering $v_1, ..., v_k$. Further suppose that $S_1$ is subcomplex of $A$ which has a coherent ordering $w_1, ..., w_k$ and that there is a simplicial homeomorphism $f: S_2 \rightarrow S_1$ given by $v_i \mapsto w_i$. Let $ \tau_1, ..., \tau_t$ denote the homology classes in $H_{d}(B)$ of the positively-oriented spheres which comprise $S_2$ and $\gamma_1, ..., \gamma_t$ denote the homology classes in $H_{d}(A)$ of the positively-oriented spheres which comprise $S_1$ with $\gamma_i$ denoting the homology class which is the image of the cycle represented by $\tau_i$ for all $i \in [t]$.  Then $H_{d}(A \cup B) \cong (H_{d}(A) \oplus H_{d}(B)) / \langle (\gamma_i, -\tau_i)_{i \in [t]}\rangle \oplus \tilde{H}_{d - 1}(A \cap B)$. 
\end{lemma}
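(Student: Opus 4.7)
The plan is to verify that $A\sqcup_f B$ is a simplicial complex using Lemma \ref{attaching}, and then to compute its $(d{-}1)$-th homology from the Mayer--Vietoris sequence, with the coherent-ordering hypothesis used to pin down the identifications of homology classes.

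First I would check the simplicial-complex structure. The pairwise vertex-disjointness of the $Z_i'$ together with the absence of edges between different $Z_i'$'s implies that any face of $B$ whose vertices lie in $\bigcup_i V(Z_i')$ is contained in a single $Z_i'$; combined with the fact that each $Z_i'$ is already induced in $B$ (its only missing face is the interior $d$-face, by the definition of coherent representation), this shows that $S := \bigsqcup_{i=1}^t Z_i'$ is an induced subcomplex of $B$. Since $f$ is a simplicial injection onto $\bigsqcup_i Z_i \subseteq A$, Lemma \ref{attaching} gives that $A \sqcup_f B$ is a simplicial complex, and I will then identify $A$ and $B$ with their images as subcomplexes whose (homotopy-theoretic) intersection is $S$.

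Next I would write out the Mayer--Vietoris sequence
\begin{equation*}
H_{d-1}(S) \xrightarrow{h} H_{d-1}(A) \oplus H_{d-1}(B) \to H_{d-1}(A\sqcup_f B) \to \tilde H_{d-2}(S) \xrightarrow{\iota} \tilde H_{d-2}(A) \oplus \tilde H_{d-2}(B).
\end{equation*}
Since each $Z_i' \cong S^{d-1}$ with $d \geq 2$, we have $H_{d-1}(S) \cong \Z^t$, the $i$-th generator being the fundamental cycle of $Z_i'$ expressed via the coherent ordering $w_0^i, \dots, w_d^i$. The coherent representation of $\tau_i$ in $B$ and of $\gamma_i$ in $A$, together with the fact that $f$ maps one coherent ordering onto the other, imply that this generator maps under inclusion into $B$ to $\tau_i$ and under $f_*$ into $A$ to $\gamma_i$. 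Hence (for the standard Mayer--Vietoris sign choice) $h$ sends the $i$-th generator to $(\gamma_i, -\tau_i)$, giving $\operatorname{coker}(h) \cong (H_{d-1}(A) \oplus H_{d-1}(B))/\langle (\gamma_i, -\tau_i)_{i\in[t]}\rangle$.

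Finally I would split the resulting short exact sequence
\begin{equation*}
0 \to \operatorname{coker}(h) \to H_{d-1}(A\sqcup_f B) \to \ker(\iota) \to 0,
\end{equation*}
which splits because $\ker(\iota) \subseteq \tilde H_{d-2}(S)$ is free abelian, and then identify $\ker(\iota)$ with $\tilde H_{d-2}(\bigsqcup_{i=1}^t S^d)$ by a brief case check: for $d \geq 3$ both groups vanish since $\tilde H_{d-2}(S^{d-1}) = \tilde H_{d-2}(S^d) = 0$, while for $d = 2$ connectedness of $A$ and $B$ forces $\iota = 0$, leaving $\ker(\iota) = \tilde H_0(\bigsqcup S^1) = \Z^{t-1} = \tilde H_0(\bigsqcup S^2)$. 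The main obstacle is the sign bookkeeping in Mayer--Vietoris: the whole purpose of the coherent-ordering setup is to guarantee that $\operatorname{im}(h)$ is exactly $\langle(\gamma_i, -\tau_i)\rangle$ rather than a sign-twisted subgroup, so the coherent orderings must be invoked carefully to produce this precise image. Once that is settled the direct-sum decomposition falls out of the exact sequence.
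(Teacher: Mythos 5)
Your proposal is correct and follows essentially the same route as the paper: Lemma \ref{attaching} (via the induced-subcomplex and simplicial-injection hypotheses) gives the simplicial complex structure, and then the Mayer--Vietoris sequence for $A$, $B$ with intersection $\bigsqcup_i Z_i'$ yields the cokernel of $h$ as the quotient by $\langle(\gamma_i,-\tau_i)\rangle$, with the split short exact sequence and the $d\geq 3$ versus $d=2$ case check handled exactly as in the paper's argument.
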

\begin{proof}
First, $\bigsqcup_{i = 1}^t Z_i'$ is an induced subcomplex of $B$ since there are no edges between $Z_i'$ and $Z_j'$ for any $i \neq j$. Also $f$ is bijective as a simplicial map and so it is a simplicial homeomorphism, thus by Lemma \ref{attaching} $A \sqcup_f B$ is a simplicial complex. \\

Now we compute the homology. By construction, we have a portion of the Mayer--Vietoris sequence below for computing the homology of $A \cup B$ ($= A \sqcup_f B$) from the homology of $A$ and $B$.
\begin{center}
$\begin{CD}
H_{d-1}(A \cap B) @>h>>  H_{d-1}(A) \oplus H_{d-1}(B) @>g>> H_{d-1}(A \cup B) @>>>\tilde{H}_{d - 2}(A \cap B) 
\end{CD}$
\end{center}

Now $h$ is given by sending the cycle $x$ to $(x, -x)$ in $H_{d-1}(A) \oplus H_{d-1}(B)$ (see, for example, Chapter 4 of \cite{Hatcher}). Furthermore since $A \cap B$ is homeomorphic to a disjoint union of $t$ $(d-1)$-dimensional spheres, $H_{d-1}(A \cap B)$ is free. Thus $h$ is determined by the image of the generators of $H_{d-1}(A \cap B)$. By how we have attached $A$ and $B$ and preserved the initial orientations on both, the image of $h$ is $\langle (\gamma_i, -\tau_i)_{i \in [t]} \rangle$. Now if $d \geq 3$ then $\tilde{H}_{d - 2}(A \cap B)$ is zero since it is the $(d-2)$nd homology group of a disjoint union of $(d-1)$-spheres. Thus for $d \geq 3$, $g$ is surjective, so $H_{d-1}(A \cap B) \cong (H_{d-1}(A) \oplus H_{d-1}(B))/\ker(g) = (H_{d-1}(A) \oplus H_{d-1}(B)) / \Im (h)$, and the lemma follows. On the other hand if $d= 2$ then by connectedness of $A$ and $B$ we have the following short exact sequence:
\begin{center}
$\begin{CD}
0 @>>> (H_1(A) \oplus H_1(B))/\Im(h) @>g'>> H_1(A \cup B) @>>> \tilde{H}_{0}(A \cap B) @>>> 0
\end{CD}$
\end{center}
Here $g'$ is the map induced by $g$ on the quotient group $(H_1(A) \oplus H_1(B)) / \Im(h)$, which is well-defined since $\Im(h) = \ker(g)$. This sequence splits since $\tilde{H}_0(A \cap B)$ is a free abelian group, so $H_1(A \cup B) \cong (H_1(A) \oplus H_1(B)) / \Im(h) \oplus \tilde{H}_0(A \cap B)$. Thus the claim follows in this case too.
 \end{proof}

\subsection{Building the space from $Y_1$ and $Y_2$.}
As discussed in the outline of the proof of Lemma \ref{const}, the ultimate goal of the construction is two simplicial complexes $Y_1$ and $Y_2$ which satisfy certain properties and which may be attached in a way to build a complex which has a prescribed finite cyclic group as the torsion group of a prescribed homology group. Before fully constructing $Y_1$ and $Y_2$ we give sufficient conditions that will allow them to be attached to form $X$ with $H_{d - 1}(X)_T \cong \Z/m\Z$:
\begin{lemma}\label{finalstep}
Fix $d \geq 2$, and let $(n_1, ..., n_k)$ be a list of $k$ nonnegative integers with $n_1 < n_2 < \cdots < n_k$. Suppose that $Y_1$ is a connected, oriented, simplicial complex with $H_{d-1}(Y_1) = \langle \gamma_0, \gamma_1, ..., \gamma_{n_k} \mid 2\gamma_0 = \gamma_1, 2\gamma_1 = \gamma_2, ..., 2\gamma_{n_k - 1} = \gamma_{n_k} \rangle$ where each $\gamma_i$ is coherently represented by a $d$-simplex boundary denoted $Z_i$, with $Z_i \cap Z_j = \emptyset$ for all $i \neq j$. Suppose that $Y_2$ is a connected, oriented, simplicial complex with $H_{d-1}(Y_2) = \langle \tau_1, ..., \tau_k \mid \tau_1 + \tau_2 + \cdots + \tau_k = 0 \rangle$ where each $\tau_i$ is coherently represented by a $d$-simplex boundary denoted $Z'_i$, so that for all $i \neq j$, $Z_i' \cap Z_j' = \emptyset$ and there are no edges between the vertices of $Z_i'$ and the vertices of $Z_j'$. Then $Y_1$ and $Y_2$ may be attached to one another along a subcomplex so that the resulting space is a simplicial complex $X$ so that $H_{d-1}(X)_T$ is isomorphic to the cyclic group of order $2^{n_1} + 2^{n_2} + \cdots + 2^{n_k}$.
\end{lemma}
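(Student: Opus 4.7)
The plan is to apply Lemma \ref{attachinglemma} directly, taking $A = Y_1$, $B = Y_2$, $t = k$, and attaching $Z_{n_i}' := Z_i'$ to $Z_i := Z_{n_i}$ via the simplicial map determined by the coherent orderings. First I would check the hypotheses: on the $Y_1$ side, the cycles $\gamma_{n_1}, \ldots, \gamma_{n_k}$ are coherently represented by $Z_{n_1}, \ldots, Z_{n_k}$, which are pairwise disjoint because all of $Z_0, \ldots, Z_{n_k}$ are assumed pairwise disjoint; on the $Y_2$ side, the hypotheses on $Z_1', \ldots, Z_k'$ (disjointness and no edges between them) are given directly. Lemma \ref{attachinglemma} then produces a simplicial complex $X = Y_1 \sqcup_f Y_2$ with
\[
H_{d-1}(X) \cong \bigl(H_{d-1}(Y_1) \oplus H_{d-1}(Y_2)\bigr)\big/\langle (\gamma_{n_i}, -\tau_i)_{i \in [k]} \rangle \,\oplus\, \tilde{H}_{d-2}\!\left(\bigsqcup_{i=1}^{k} S^{d-1}\right).
\]

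Next I would compute the quotient group. Substituting the presentations of $H_{d-1}(Y_1)$ and $H_{d-1}(Y_2)$ and then using each relation $\gamma_{n_i} = \tau_i$ to eliminate $\tau_i$, the quotient becomes
\[
\langle \gamma_0, \gamma_1, \ldots, \gamma_{n_k} \mid 2\gamma_0 = \gamma_1,\; 2\gamma_1 = \gamma_2,\; \ldots,\; 2\gamma_{n_k-1} = \gamma_{n_k},\; \gamma_{n_1} + \gamma_{n_2} + \cdots + \gamma_{n_k} = 0 \rangle.
\]
The repeated-squaring relations give $\gamma_i = 2^i \gamma_0$, so all generators collapse to multiples of $\gamma_0$, and the final relation becomes $(2^{n_1} + 2^{n_2} + \cdots + 2^{n_k})\gamma_0 = 0$. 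Hence the quotient is cyclic of order $m = 2^{n_1} + \cdots + 2^{n_k}$, which is a finite (hence torsion) group.

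Finally I would handle the free summand coming from $\tilde{H}_{d-2}(\bigsqcup_{i=1}^k S^{d-1})$. When $d \geq 3$ this group vanishes, so $H_{d-1}(X) \cong \Z/m\Z$ outright. When $d = 2$ this summand is $\Z^{k-1}$, which is free and therefore contributes nothing to the torsion subgroup; in either case $H_{d-1}(X)_T \cong \Z/m\Z$ as required.

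There is not really a hard step here: granted Lemma \ref{attachinglemma}, the proof is almost purely a presentation-manipulation argument. The only point needing a moment's care is the bookkeeping of coherent orientations so that the attaching map genuinely identifies $\tau_i$ with $\gamma_{n_i}$ (rather than with $-\gamma_{n_i}$), which is precisely the content built into the definition of "coherently represented" and is why Lemma \ref{attachinglemma} outputs the relation $(\gamma_{n_i}, -\tau_i)$ rather than some sign-ambiguous version; once this is in place, the rest is automatic.
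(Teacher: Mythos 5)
Your proposal is correct and follows essentially the same route as the paper: apply Lemma \ref{attachinglemma} with the order-preserving simplicial map sending each $Z_i'$ onto $Z_{n_i}$, compute the quotient presentation to get $\Z/(2^{n_1}+\cdots+2^{n_k})\Z$, and note that the extra summand $\tilde{H}_{d-2}$ of the disjoint spheres is free (zero for $d\geq 3$, $\Z^{k-1}$ for $d=2$) and so does not affect the torsion part.
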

\begin{proof}
We will apply Lemma \ref{attachinglemma}. For each $i$, let $v_0^i, ..., v_d^i$ denote the coherent ordering of $Z_i$, and let $w_0^i, .., w_d^i$ denote the coherent-ordering of $Z_i'$. Define a simplicial map $f: \bigsqcup_{i = 1}^k Z_i' \rightarrow \bigsqcup_{i = 1}^k Z_{n_i}$ given by $w_j^i \mapsto v_j^{n_i}$ for all $j \in \{0, ..., d\}$ and all $i \in \{1, ..., k\}$. Let $X = Y_1 \sqcup_f Y_2$, then by Lemma \ref{attachinglemma}, $X$ is a simplicial complex, and:
% In the statement of lemma \ref{attachinglemma} let $Y_1$ be $A$ and $Y_2$ be $B$. Since $Z_1, ..., Z_k$ are coherently-ordered, disjoint, embedded copies of the boundary of the $(d+1)$-simplex they each have a coherent-ordering on their vertices. Denote by the coherent ordering of $Z_i$ by $w_0^i, w_1^i, ..., w_{d + 1}^i$. Now $w_0^{n_1}, w_1^{n_1}, ..., w_{d + 1}^{n_1}, w_0^{n_2}, w_1^{n_2}, ..., w_{d + 1}^{n_2}, ..., w_0^{n_k}, ..., w_{d + 1}^{n_k}$ is a coherent ordering for $\bigsqcup_{i = 1}^{k} Z_{n_i}$ (recall that ``coherently-ordered" is defined on a subcomplex, not on a set of vertices so we don't need to worry about faces which on the vertex set of $\bigsqcup_{i = 1}^{n_k} Z_i$ but not belonging to any of the $Z_i$). Similarly, let $v_0^1, v_1^1, ..., v_{d + 1}^1, v_0^2, ..., v_0^{k}, ..., v_{d + 1}^k$ be a coherent-order for $\bigsqcup_{i = 1}^{k} Z_i'$. Since there are no edges between $Z_i'$ and $Z_j'$ for any $i \neq j$ we have that $\bigsqcup_{i = 1}^{k} Z_i'$ is an induced subcomplex of $Y_2$ so it can take the role of $S_2$ in the statement of lemma \ref{attachinglemma}. Now the map $f: \bigsqcup_{i = 1}^{k} Z_i' \rightarrow \bigsqcup_{i = 1}^{k} Z_{n_i}$ given by sending $v_i^{n_j}$ to $w_i^j$ for each $i \in \{0, ..., d+1\}$ and $j \in \{1, ..., k\}$ clearly induces a simplicial homeomorphism. Thus it follows from claim \ref{attaching} that $X := Y_1 \sqcup_{f} Y_2$ is a simplicial complex and from lemma \ref{attachinglemma} that
\begin{eqnarray*}
H_{d-1}(X)_T &\cong& [(H_{d-1}(Y_1) \oplus H_{d-1}(Y_2)) / \langle (\gamma_{n_i}, \tau_i)_{i \in [k]} \rangle]_T \\
 &\cong& \langle \gamma_0, \gamma_1, ..., \gamma_{n_k}, \tau_1, ..., \tau_k \mid 2\gamma_0 = \gamma_1, 2\gamma_1 = \gamma_2, ..., 2\gamma_{n_k - 1} = \gamma_{n_k}, \\
&&\tau_1 + \tau_2 + \cdots + \tau_k = 0, \gamma_{n_1} = \tau_1, ..., \gamma_{n_k} = \tau_k \rangle _T\\
&\cong& \langle \gamma_0, \gamma_1, ..., \gamma_{n_k} \mid 2\gamma_0 = \gamma_1, ..., 2\gamma_{n_k - 1} = \gamma_{n_k}, \gamma_{n_1} + \gamma_{n_2} + \cdots + \gamma_{n_k} = 0 \rangle_T \\
&\cong& [\Z/(2^{n_1} + 2^{n_2} + \cdots + 2^{n_k})\Z]_T \\
&=& \Z/(2^{n_1} + 2^{n_2} + \cdots + 2^{n_k})\Z
\end{eqnarray*}
In fact, from Lemma \ref{attachinglemma}, we actually have if $d \geq 3$ then $H_{d-1}(X) \cong \Z/(2^{n_1} + 2^{n_2} + \cdots + 2^{n_k})\Z$, and if $d = 2$ then $H_1(X) \cong \Z^{k - 1} \oplus \Z/(2^{n_1} + 2^{n_2} + \cdots + 2^{n_k})\Z$.
 \end{proof}

Now if we have $Y_1$ and $Y_2$ attached to one another satisfying the statement of Lemma \ref{finalstep}, then the clearly the resulting complex $X$ has $\Delta(X) \leq \Delta(Y_1) + \Delta(Y_2)$ and $|V(X)| \leq |V(Y_1)| + |V(Y_2)|$. But of course, this is irrelevant to checking that $X$ has the prescribed torsion in homology. Nevertheless, these properties are both critical to the second step in the proof of the main theorem. Moving forward it will be our goal to build $Y_1$ and $Y_2$ satisfying the assumptions of Lemma \ref{finalstep}, but with bounded degree and few vertices. 

\subsection{The triangulated sphere $Y_2$}\label{Y2details}
In this section we describe how to build the space $Y_2$. The complex $Y_2$ will be a triangulated $d$-dimensional sphere with $k$ top-dimensional faces removed where $k$ is the Hamming weight of $m$. However we want a bound on the degree of $Y_2$ and have the number of vertices be linear in $k$. Towards that goal we prove the following fact about triangulations of $d$-dimensional spheres.

\begin{lemma} \label{sphere}
For every $d, k \in \N$ there exists a triangulation $T$ of $S^d$ so that $\Delta_{0, 1}(T) \leq (d + 1)(d^2 + d + 2)$, $V(T) = (d^2 + d + 1)k + (d + 2)^2$, and T has $k$ $d$-dimensional faces which are vertex-disjoint and nonadjacent. Therefore for every $d  \in \N$ there exists $L$ so that for any $k$, there is a triangulation $T$ of $S^d$ so that $\Delta(T) \leq L$ and $|V(T)| \leq Lk$ which has $k$ $d$-dimensional faces $t_1, ..., t_k$ which are vertex-disjoint and nonadjacent.
\end{lemma}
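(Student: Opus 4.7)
The approach is to construct $T$ by iteratively attaching $k$ copies of a fixed ``gadget'' simplicial complex to a fixed base sphere. First, I would construct a gadget $G = G(d)$: a triangulated $d$-ball with boundary $\partial \Delta^d$ (so $d+1$ boundary vertices) and exactly $d^2 + d + 1$ interior vertices, containing two distinguished interior $d$-faces: an ``isolated'' face $t$ whose $d+1$ vertices have no edges to any boundary vertex of $G$ (the buffer property), and an ``exit'' face $\sigma^*$ whose $d+1$ vertices are disjoint from $\partial G$. The gadget is built as a layered complex: one starts from $\partial G = \partial \Delta^d$ and inserts a prismatic triangulation of $\partial \Delta^d \times I$ with enough intermediate vertices so that $t$ sits deep in the interior, separated from $\partial G$ by the buffer layer, while $\sigma^*$ is placed among the intermediate vertices to serve as the attachment point for the next gadget. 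The exact interior-vertex count $d^2 + d + 1$ is maintained by careful bookkeeping of the layered structure, and $\Delta_{0,1}(G)$ is bounded by a constant depending only on $d$.

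For the base, I would take a triangulated $d$-sphere $T_0$ on exactly $(d+2)^2$ vertices containing a distinguished $d$-face $\sigma_0$, with bounded local degree; this can be obtained from $\partial \Delta^{d+1}$ (which has $d+2$ vertices) by a fixed sequence of local modifications such as stellar subdivisions of appropriately chosen faces, calibrated to hit the exact count. The iteration then proceeds: given $T_{i-1}$ with exit $d$-face $\sigma_{i-1}$, form $T_i$ by removing the face $\sigma_{i-1}$ from $T_{i-1}$ and gluing a fresh copy $G_i$ of the gadget by identifying $\partial G_i$ with $\partial \sigma_{i-1}$; Lemma~\ref{attaching} guarantees that $T_i$ is a simplicial complex. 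Each such attachment is topologically the gluing of a $d$-ball to a $d$-sphere along a $(d-1)$-sphere in the boundary, so $T_i$ remains a triangulated $d$-sphere. The copy of $t$ in $G_i$ becomes the $i$-th isolated face $t_i$, and the copy of $\sigma^*$ in $G_i$ becomes the next exit face $\sigma_i$; since $\sigma^*$ is disjoint from $\partial G_i$, no vertex is a boundary vertex of more than two consecutive gadgets. Setting $T = T_k$ yields the desired complex.

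Verification is then largely straightforward. The faces $t_1, \ldots, t_k$ are pairwise vertex-disjoint because each lies in the interior of a distinct gadget. They are pairwise non-adjacent because any edge between a vertex of $t_i$ and a vertex of $t_j$ for $i \neq j$ would have to lie in $G_i \cap G_j$, which is at most a shared copy of $\partial \Delta^d$ containing no $t$-vertex. The vertex count is $|V(T)| = |V(T_0)| + k \cdot (d^2 + d + 1) = (d+2)^2 + (d^2+d+1)k$. For the degree bound, each vertex of $T$ belongs to at most two pieces (two consecutive gadgets, or the base plus one gadget), and its degree within each piece is bounded by the total number of vertices in that piece minus one; summing gives the claimed bound $(d+1)(d^2+d+2)$ after a crude estimate. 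The main obstacle is the first step: giving an explicit layered construction of the gadget $G$ with the precise interior-vertex count $d^2+d+1$ and with the buffer property verified, the $d=2$ instance in Lemma~\ref{2sphere} serving as the template for the general construction.
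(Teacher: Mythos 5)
Your route is genuinely different from the paper's, and its overall architecture is sound: the paper never uses a gadget-and-chain construction. Instead it grows a bounded-degree triangulation of $S^d$ from $\partial\Delta^{d+1}$ by a ``sliding window'' sequence of bistellar $0$-moves (keeping $\Delta_{0,1}\leq 2(d+1)$ and $\Delta_{0,d}\leq d(d+1)$), and then performs $d+1$ further bistellar $0$-moves inside \emph{every} facet to create, in each one, a facet spanned entirely by new vertices; these are automatically vertex-disjoint and nonadjacent, and the exact counts $(d^2+d+1)k+(d+2)^2$ and $(d+1)(d^2+d+2)$ fall out of tracking each move. Your scheme instead attaches a chain of $k$ copies of a fixed ball-gadget to a base sphere; what it buys is modularity (disjointness, nonadjacency, the exact vertex count, and the degree bound all follow from ``each vertex lies in at most two pieces of bounded size''), at the price that the whole burden is shifted onto the gadget, which you do not actually construct. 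That is the one real gap: the ``prismatic triangulation of $\partial\Delta^d\times I$'' as described is not itself a ball with boundary $\partial\Delta^d$ (it has two boundary spheres and would need capping), and the exact interior count $d^2+d+1$ is asserted via ``careful bookkeeping'' rather than exhibited. The gap is fillable, and in fact by the paper's own engine: inside a single $d$-simplex, a chain of $d+1$ stellar subdivisions of facets produces a facet on all-new interior vertices (this is exactly how the paper isolates its faces), a second such chain in a different facet produces the exit face $\sigma^*$, and padding with further stellar subdivisions of facets other than $t$ and $\sigma^*$ reaches exactly $d^2+d+1$ interior vertices while keeping $\partial\Delta^d$ induced, as Lemma \ref{attaching} requires. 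Two smaller remarks: the ``buffer'' property (no edges from $t$ to $\partial G$) is stronger than you need, since interiority of the vertices of each $t_i$ already rules out edges between different $t_i$'s (any edge lies in a single piece); and your crude degree estimate, two pieces times (piece size minus one), meets the stated constant only with equality at $d=2$ (it gives exactly $24$) and fails at $d=1$, where the lemma is trivial but would need a separate sentence.
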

\begin{proof}
We first define, for each fixed $d$, an infinite sequence $T_0, T_1, ...$ of triangulations of $S^d$ with $\Delta_{0, 1}(T_i) \leq 2(d + 1)$ and $|V(T_i)| = d + 2 + i$ for all $i$. Let $T_0$ be the $(d +1)$-simplex boundary on the vertex set $\{v_0, v_1, ..., v_{d + 1}\}$. We will build the sequence inductively using bistellar 0-move (first defined by \cite{Pachner1,Pachner2}). Recall that a bistellar 0-moves for a $d$-dimensional simplicial complex is the triangulation obtained by deleting a $d$-dimensional face and replacing it by the cone over its boundary. To obtain $T_1$, we perform a bistellar 0-move at the $d$-dimensional face $[v_1, ..., v_{d + 1}]$ and call the new vertex $v_{d + 2}$. In general we obtain $T_{i + 1}$ from $T_i$ by performing a bistellar 0-move at the $d$-dimensional face $[v_{i + 1}, ..., v_{d + i + 1}]$ with new vertex $v_{d + i + 2}$. We note that this bistellar 0-move is always well-defined since it results in $[v_{i + 2}, ..., v_{d + i + 2}]$ existing in $T_{i + 1}$ and so we may continue inductively. If $d = 2$, then Figure \ref{fig:2sphere} shows successive triangulations of the face $[v_1, v_2, v_3]$ in $T_0, ..., T_4$.\\

By construction, we have that $|V(T_{i + 1})| =|V(T_i)| + 1$ and $|f_d(T_{i+1})| = |f_d(T_i)| + d$. Moreover, by the choice of the face to subdivide at each step, we have that for each vertex we only subdivide a face containing it $(d + 1)$ times in the entire sequence. Each of these subdivisions increases the edge-degree of any vertex by at most 1. Since each vertex has edge-degree $d + 1$ when it is added to the complex, we have that $\Delta_{0, 1}(T_i) \leq 2(d + 1)$ for all $i$. Similarly, when a vertex is added it belongs to $d + 1$ top-dimensional faces, and every bistellar 0-move increases the number of top dimensional faces which contain a fixed vertex by $d - 1$, thus $\Delta_{0, d}(T_i) \leq d + 1 + (d - 1)(d + 1) = (d + 1)d$ for all $i$. We will need this in the next step. \\

Now let $k$ and $d$ be given. Take the triangulation $T_k$ of $S^d$ as defined above. Then $T_k$ has $d + 2 + dk \geq k$ faces, but they are not vertex disjoint nor are they nonadjacent. To fix this, we will perform a sequence of $d + 1$ bistellar 0-moves on every $d$-dimensional face of $T_k$ to reach the final triangulation that we want. At each face $[u_0, ..., u_d] \in f_d(T_k)$, take the bistellar 0-move with new vertex $w_0$, then take a bistellar 0-move at $[u_1, ..., u_d, w_0]$ with new vertex $w_1$, and continue in this way to obtain a new face $[w_0, w_1, ..., w_d]$. These new faces form a family of vertex-disjoint and nonadjacent faces of size $d + 2 + dk \geq k$, so let $T$ be this final triangulation. By this subdivision process on $T_k$, we have added $|f_d(T_k)|(d + 1)$ new vertices and have increased the edge-degree of every vertex in $T_k$ by at most $(d + 1)\Delta_{0, d}(T_k)$ plus we have added new vertices which have edge-degree at most $2(d + 1)$. Therefore we have $|V(T)| = |V(T_k)| + |f_d(T_k)|(d + 1) = d + 2 + k + (d + 1)(d + 2 + dk) = (d^2 + d + 1)k + (d + 2)^2$ and $\Delta_{0, 1}(T) \leq 2(d + 1) + (d + 1)\Delta_{0, d}(T_k) \leq 2(d + 1) + (d + 1)^2d = (d + 1)(d^2 + d + 2)$.\\

The second part of the lemma follows immediately since bounded degree of the 1-skeleton of $T$ implies that $\Delta(T)$ is bounded. 
\end{proof}
\begin{center}
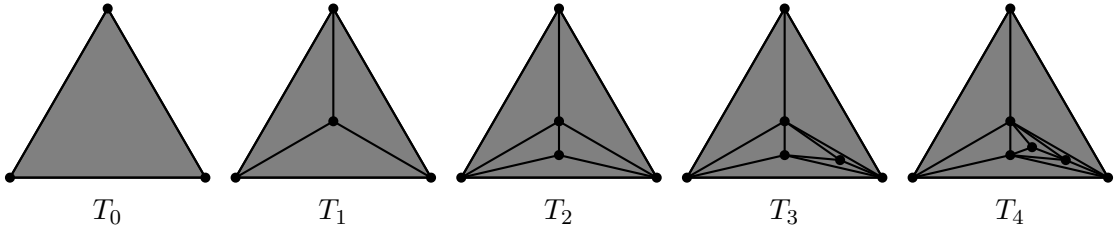
\begin{figure}[h]
\centering
\tikzstyle{every node}=[circle, draw, fill=black,
                        inner sep=0pt, minimum width=3pt]
\begin{centering}                        
\begin{tikzpicture}[thick,scale=3]
%\foreach \x in {0,72, ..., 288}
\foreach \x in {-2, -1, 0, 1, 2}
{
	\begin{scope}[xshift = \x cm ]
		\draw[fill = gray] (90 : 0.5) -- (210:0.5) -- (330:0.5) -- cycle;
		\draw (90:0.5) -- (210:0.5);
		\draw(210:0.5) -- (330:0.5);
		\draw(330:0.5) -- (90:0.5);
		\draw (90: 0.5) node{};
		\draw (210:0.5) node{};
		\draw (330:0.5) node{};
	\end{scope}
	
}

\foreach \x in {-1, 0, 1, 2}
{
	\begin{scope}[xshift = \x cm ]
		\draw (90:0) -- (90:0.5);
		\draw  (90:0) -- (210:0.5);
		\draw (90:0) -- (330:0.5);
		\draw (90: 0) node{};
		
	\end{scope}
	
}

\foreach \x in {0, 1, 2}
{
	\begin{scope}[xshift = \x cm ]
		\draw (90:-0.15) -- (90:0);
		\draw  (90:-0.15) -- (210:0.5);
		\draw (90:-0.15) -- (330:0.5);
		\draw (90: -0.15) node{};
		
	\end{scope}
	
}

\foreach \x in {1, 2}
{
	\begin{scope}[xshift = \x cm ]
		\draw (145:-0.3) -- (90:0);
		\draw  (145:-0.3) -- (90:-0.15);
		\draw (145:-0.3) -- (330:0.5);
		\draw (145: -0.3) node{};
		
	\end{scope}
	
}

\foreach \x in {2}
{
	\begin{scope}[xshift = \x cm ]
		\draw (130:-0.15) -- (90:0);
		\draw  (130:-0.15) -- (90:-0.15);
		\draw (130:-0.15) -- (145:-0.3);
		\draw (130: -0.15) node{};
		
	\end{scope}
	
}
\tikzstyle{every node} = []
\draw (-2, -0.4) node{$T_0$};
\draw (-1, -0.4) node{$T_1$};
\draw (0, -0.4) node{$T_2$};
\draw (1, -0.4) node{$T_3$};
\draw (2, -0.4) node{$T_4$};
\end{tikzpicture} \\
\end{centering}
\caption{The first few steps of the triangulation described in the proof of Lemma \ref{sphere} when $d = 2$}\label{fig:2sphere}
\end{figure}
\end{center}
Next we construct $Y_2$ from the claim above.
\begin{lemma}\label{Y2lemma}
Fix a dimension $d \geq 2$ and let $L$ be the constant associated to $d$ from Lemma \ref{sphere}. Given an integer $k$ there exists a connected, oriented simplicial complex denoted $Y_2$ so that $H_{d-1}(Y_2)$ is presented as the abelian group $\langle \tau_1, ..., \tau_k \mid \tau_1 + \tau_2 + \cdots + \tau_k = 0 \rangle$ where each $\tau_i$ is coherently represented by a $d$-simplex boundary, which are vertex-disjoint and nonadjacent to one another, and so that $\Delta(Y_2) \leq L$ and $|V(Y_2)| \leq 2Lk$.
\end{lemma}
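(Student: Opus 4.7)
The plan is to apply Lemma \ref{sphere} with parameter $2k$ (rather than $k$) to obtain a triangulation $T$ of $S^d$ with $\Delta(T) \leq L$, $|V(T)| \leq 2Lk$, and a family $F_1, \ldots, F_{2k}$ of vertex-disjoint and nonadjacent $d$-dimensional faces. The complex $Y_2$ will then be obtained by removing a carefully chosen subfamily of $k$ of these faces.

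First I would fix a total ordering on $V(T)$; this orients every face of $T$ via its induced simplex ordering, and in particular each $F_i$ is oriented by its coherent ordering so that $\partial F_i$ coincides with the $(d-1)$-cycle coherently representing it. Because $T$ is a triangulated $d$-sphere, $H_d(T) \cong \mathbb{Z}$ is generated by a fundamental class of the form $[T] = \sum_F \epsilon_F F$ with $\epsilon_F \in \{\pm 1\}$. By the pigeonhole principle at least $k$ of $F_1, \ldots, F_{2k}$ share a common sign $\epsilon \in \{\pm 1\}$ in this expansion; after relabeling, take these to be $F_1, \ldots, F_k$. Define $Y_2$ to be $T$ with the interiors of $F_1, \ldots, F_k$ deleted. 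Since $d \geq 2$ we have $\Delta(Y_2) \leq \Delta(T) \leq L$, $|V(Y_2)| = |V(T)| \leq 2Lk$, and $Y_2$ retains the $1$-skeleton of $T$, hence is connected.

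To compute $H_{d-1}(Y_2)$, I would use the long exact sequence of the pair $(T, Y_2)$. The relative chain complex is concentrated in degree $d$ with $C_d(T, Y_2) \cong \mathbb{Z}^k$ generated by the relative classes $[F_i]$, so $H_d(T, Y_2) \cong \mathbb{Z}^k$ and $H_j(T, Y_2) = 0$ for $j \neq d$. No nonzero $d$-cycle of $T$ lies in $C_d(Y_2)$ because every generator of $H_d(T)$ has nonzero coefficient on each $F_i$, so $H_d(Y_2) = 0$; combined with $H_{d-1}(T) = 0$ the sequence reduces to
\[
0 \longrightarrow H_d(T) \longrightarrow H_d(T, Y_2) \longrightarrow H_{d-1}(Y_2) \longrightarrow 0,
\]
where the first map sends $[T] \mapsto (\epsilon, \epsilon, \ldots, \epsilon)$ and the connecting map sends $[F_i]$ to the cycle $\partial F_i$. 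Setting $\tau_i := [\partial F_i]$ therefore presents $H_{d-1}(Y_2) \cong \mathbb{Z}^k/\langle (\epsilon, \ldots, \epsilon)\rangle$ as $\langle \tau_1, \ldots, \tau_k \mid \tau_1 + \cdots + \tau_k = 0 \rangle$ (multiplying the relation by $\epsilon$ if needed). By construction each $\tau_i$ is coherently represented by $\partial F_i$, and the boundaries are mutually vertex-disjoint and nonadjacent since the $F_i$ were.

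I expect the main obstacle to be the orientation bookkeeping: one has to verify that the connecting homomorphism really sends the relative generator $[F_i]$ to $\partial F_i$ with the orientation induced by the coherent ordering, so that the resulting generators $\tau_i$ are genuinely coherently represented in the sense required for Lemma \ref{attachinglemma}. Once the orientation conventions are pinned down, the pigeonhole step is immediate and the exact-sequence computation is routine.
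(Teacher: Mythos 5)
Your proposal is correct and follows essentially the same route as the paper: invoke Lemma \ref{sphere} with $2k$ faces, use the $\pm 1$ fundamental cycle of the triangulated sphere and pigeonhole to find $k$ distinguished faces carrying a common sign, and delete those to obtain $Y_2$ with the stated degree, vertex, and coherence properties. The only difference is cosmetic: the paper asserts the presentation of $H_{d-1}(Y_2)$ directly from the boundary of the remaining fundamental chain, whereas you justify it via the long exact sequence of the pair $(T, Y_2)$, which is a valid and slightly more detailed verification of the same fact.
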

\begin{proof}
Let $L$ be the constant depending on $d$ found in Lemma \ref{sphere} and let $T$ be a triangulation of $S^{d}$ with $\Delta(T) \leq L$ and $|V(T)| \leq L(2k)$ which has $2k$ vertex-disjoint and nonadjacent $d$-dimensional faces, which we will denote by $t_1, t_2, ..., t_{2k}$. Now give $T$ an orientation according to some ordering on the vertices. Since $T$ is a triangulation of the $d$-dimensional sphere there exists a $d$-chain $x = (x_1, ...., x_l)$ (where $l = |f_d(T)|$) so that $\partial_d x = 0$. Furthermore since every codimension-1 face of $T$ has degree 2 (since $T$ is a triangulated manifold), we may assume that $x$ is a $d$-chain with all coefficients equaling $-1$ or 1. Thus without loss of generality the coefficient on at least $k$ of the $2k$ faces $t_1, t_2, ..., t_{2k}$ is $-1$. Then if we delete $k$ of these faces we get the oriented simplicial complex that we want, $Y_2$ which has homology $H_{d - 1}(Y_2)$ presented as $\langle \tau_1, ..., \tau_k \mid \tau_1 + \tau_2 + \cdots + \tau_k = 0 \rangle$ where each $\tau_i$ is coherently represented by the $d$-simplex boundary of a unique deleted face. Clearly $\Delta(Y_2) \leq \Delta(T) \leq L$, $|V(Y_2)| = |V(T)| \leq 2Lk$, and $Y_2$ is connected.
 \end{proof}

\subsection{The triangulated telescope $Y_1$}\label{Y1details}
In this section we will describe how to build, for any dimension $d \geq 2$ and any integer $n$, a $d$-dimensional, connected, oriented simplicial complex $Y_1$ so that $H_{d - 1}(Y_1)$ is presented by $\langle \gamma_0, \gamma_1, \gamma_2, ... \gamma_n \mid 2 \gamma_0 = \gamma_1, 2\gamma_1 = \gamma_2, \cdots, 2 \gamma_{n - 1} =\gamma_n \rangle$ where each $\gamma_i$ is coherently represented by a $d$-simplex boundary, which are vertex disjoint from one another, and so that $\Delta(Y_1) \leq M$ and $|V(Y_1)| \leq Mn$ for some constant $M$ depending only on $d$. Note that $n$ will be $n_k$ from the binary expansion of $m$, that is $n$ will be the number of bits in the binary expansion of $m$.

The construction of $Y_1$ will be accomplished by first constructing a simplicial complex $P = P(d)$ with an orientation so that $H_{d - 1}(P) = \langle a, b \mid 2a = b \rangle$ where $a$ and $b$ are coherently represented by a pair of vertex-disjoint $d$-simplex boundaries. We may then attach $n$ copies of these complexes ``end-to-end" to build $Y_1$, with $\Delta(Y_1) \leq 2\Delta(P)$ and $|V(Y_1)| \leq |V(P)|n$. We now build $P(d)$.\\

\begin{lemma}\label{Pdlemma}
Fix $d \geq 2$. Then there exists an oriented simplicial complex $P$ depending only on $d$, with its orientation induced by an ordering on the vertices, so that $H_{d - 1}(P)$ is presented as $\langle a, b \mid 2a = b \rangle$ where $a$ and $b$ are coherently represented by a pair of vertex-disjoint $d$-simplex boundaries.
\end{lemma}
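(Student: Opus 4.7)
The plan is to realize $P$ as a simplicial model of the mapping cylinder of a degree-$2$ self-map $f : S^{d-1} \to S^{d-1}$, rigged so that both the source and the target copies of $S^{d-1}$ appear as induced $\partial \Delta^d$ subcomplexes on disjoint vertex sets. Topologically, the mapping cylinder $M_f$ deformation retracts onto its target, so $H_{d-1}(M_f) \cong \Z$ generated by the target class, and the source represents $f_* [S^{d-1}] = 2 \cdot [\mathrm{target}]$ by the degree-$2$ identification. Labelling the target class $a$ and the source class $b$ produces exactly the presentation $\langle a, b \mid 2a = b \rangle$.

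For the simplicial realization, I would begin by placing two vertex-disjoint copies of $\partial \Delta^d$, namely $Z_a$ on $\{a_0, \dots, a_d\}$ and $Z_b$ on $\{b_0, \dots, b_d\}$; these are automatically induced as long as their interior $d$-faces are not added. To build the connecting region I would introduce an auxiliary triangulation $K$ of $S^{d-1}$ carrying a simplicial map $\phi : K \to Z_a$ of degree $2$ (obtained, for instance, by a standard simplicial double-cover construction on a sufficiently subdivided $\partial \Delta^d$), and form the usual simplicial mapping cylinder of $\phi$ on top of $Z_a$. Finally I would attach a simplicial product cobordism $S^{d-1} \times [0,1]$ joining $K$ to $Z_b$; such cobordisms always exist between two triangulated $(d-1)$-spheres and preserve the homology class. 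The base case $d = 2$ is witnessed explicitly by the $6$-vertex triangulation of $\mathbb{RP}^2$ with the face $[v_3, v_4, v_5]$ removed (already used in Section \ref{init}), where $Z_a$ corresponds to the induced triangle boundary $\partial [v_0, v_1, v_2]$ and $Z_b$ to the induced triangle boundary $\partial [v_3, v_4, v_5]$.

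The homology computation is then a Mayer--Vietoris argument on the decomposition $P = U \cup V$, where $U$ is a regular neighborhood of $Z_a$ together with the mapping cylinder (so $U \simeq Z_a$), $V$ is a regular neighborhood of $Z_b$ together with the cobordism (so $V \simeq Z_b$), and $U \cap V \simeq K \simeq S^{d-1}$. The map $H_{d-1}(U \cap V) \to H_{d-1}(U) \oplus H_{d-1}(V)$ sends the fundamental class $[K]$ to $(2[Z_a], -[Z_b])$ because $\phi$ has degree $2$ and the cobordism preserves the class of $Z_b$; taking the cokernel gives $\langle a, b \mid 2a = b \rangle$, as desired. Coherence of the orientations of $Z_a$ and $Z_b$ is arranged by choosing any global linear ordering on $V(P)$ whose restriction to each of $\{a_0, \dots, a_d\}$ and $\{b_0, \dots, b_d\}$ is the increasing order, which makes the distinguished orderings of $Z_a$ and $Z_b$ coherent in the sense of the definition preceding Lemma \ref{attachinglemma}.

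The main obstacle is the explicit combinatorial construction of the degree-$2$ simplicial map $\phi$ and its mapping cylinder in arbitrary dimension, while keeping $Z_a$ and $Z_b$ induced subcomplexes on disjoint vertex sets and controlling the vertices and edges introduced so that no unintended faces appear between the two distinguished $\partial \Delta^d$'s. Since the entire construction uses only a bounded (depending on $d$) number of vertices and has bounded face-degree, the resulting complex $P = P(d)$ depends only on $d$, which is exactly what is needed for the outer induction in Lemma \ref{const}.
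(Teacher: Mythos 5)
Your overall strategy is genuinely different from the paper's and is topologically the right picture (the paper itself describes $P(2)$ as the mapping cylinder of the degree-$2$ map $S^1\to S^1$), but as written it has a real gap exactly where the lemma lives: the combinatorial construction. Your one concrete mechanism for producing the simplicial degree-$2$ map $\phi:K\to Z_a$ --- ``a standard simplicial double-cover construction'' --- fails for $d\geq 3$: $S^{d-1}$ is simply connected, so its only double cover is the disconnected one $S^{d-1}\sqcup S^{d-1}$, and degree-$2$ self-maps of higher spheres are not covering maps at all. A simplicial degree-$2$ map onto $\partial\Delta^d$ does exist (e.g.\ by simplicial approximation of a continuous degree-$2$ map after subdividing the domain), but then you still owe a simplicial mapping cylinder of $\phi$, a triangulated cobordism $S^{d-1}\times[0,1]$ between the two different triangulations $K$ and $Z_b$, and an argument that after gluing these pieces $Z_a$ and $Z_b$ remain induced, vertex-disjoint, and free of unintended faces between them --- you explicitly defer all of this as ``the main obstacle,'' but that obstacle is the content of the lemma. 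There is also a sign issue you pass over: choosing a global vertex ordering that restricts to the increasing order on $\{a_0,\dots,a_d\}$ and $\{b_0,\dots,b_d\}$ makes the orderings coherent, but it does not decide whether the resulting classes satisfy $2a=b$ or $2a=-b$; the paper has to repair exactly this by transposing two vertices of the $b$-simplex (or adjusting the base case), and some such fix must appear in your argument as well.

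For comparison, the paper avoids needing any degree-$2$ map above dimension $2$: it takes the explicit $6$-vertex punctured $\mathbb{RP}^2$ as $P(2)$, verifies $2a=b$ by one boundary computation, and then builds $P(d+1)$ from $P(d)$ by suspension. An explicit chain map $[v_0,\dots,v_i]\mapsto[w_1,v_0,\dots,v_i]-[w_2,v_0,\dots,v_i]$ shows the suspension preserves the presentation $\langle a,b\mid 2a=b\rangle$, and the suspended generators are then converted into classes coherently represented by induced, vertex-disjoint $d$-simplex boundaries by adding just two top faces $[w_1,a_0,\dots,a_d]$ and $[w_2,b_0,\dots,b_d]$ (plus their interiors' facets), followed by the sign correction. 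Your Mayer--Vietoris endgame, with $[K]\mapsto(2[Z_a],-[Z_b])$ and $\tilde H_{d-2}(S^{d-1})=0$, is fine and would close the argument once the simplicial pieces actually exist; if you want to keep the mapping-cylinder viewpoint, the cleanest repair is to note that suspension of a degree-$2$ map is again degree $2$ and to build the cylinder inductively --- which is, in effect, what the paper does.
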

\begin{proof}
The proof will be by induction on $d$. For $d = 2$ our complex will be the pure simplicial complex on vertex set $\{1, 2, 3, 4, 5, 6\}$ with orientation induced by the natural ordering on the vertices and top dimensional faces $[1, 2, 6]$, $[1, 3, 6]$, $[3, 5, 6]$, $[2, 4, 6]$, $[2, 3, 4]$, $[1, 3, 4]$, $[1, 4, 5]$, $[1, 2, 5]$, and $[2, 3, 5]$. This complex is given as Figure 4. Of course it matches Figure 2, but the vertices have been relabeled as the focus here is only to describe the building block, but not how they are attached to one another.
\begin{center}
\begin{figure}[h]\label{fig:P2}
\centering
\tikzstyle{every node}=[circle, draw, fill=black,
                        inner sep=0pt, minimum width=4pt]
\begin{centering}                        
\begin{tikzpicture}[thick,scale=1.75]
%\foreach \x in {0,72, ..., 288}
\draw[fill = gray] (90: 1) -- (150: 1) -- (210:1) -- (270:1) -- (330:1) -- (390: 1) -- cycle;
\draw[fill = white] (90:0.5) -- (210:0.5) -- (330:0.5) -- cycle;
\foreach \x in {90, 150, ..., 390}
{
%\draw (0,0)--(\x:1);
\draw (\x:1)--(\x+60:1);
}
%\draw[fill = gray, opacity = 0.1] (0, 0) -- (0:1) -- (60:1) -- cycle;
%\foreach \x in {0,72, ..., 288}

\foreach \x in {90, 210, 330}
{
	\draw (\x:0.5) -- (\x + 120:0.5);
	\draw (\x: 0.5) -- (\x: 1);
}

\foreach \x in {90, 210, 330}
{
	\draw(\x:0.5) -- (\x + 60: 1);
	\draw(\x:0.5) -- (\x - 60: 1);
}

\foreach \x in {90, 210, 330}
{
	\draw (\x:0.5) node{};
}
\foreach \x in {90, 150, ..., 390}
{
\draw(\x:1) node {};
}
\tikzstyle{every node}=[]

{
\tiny
\draw (90:1.15) node {$1$};
\draw(150:1.15) node{$2$};
\draw(210:1.15) node{$3$};
\draw (270:1.15) node {$1$};
\draw(330:1.15) node{$2$};
\draw(390:1.15) node{$3$};
}

{
\tiny
\draw (90:0.35) node {$6$};
\draw(210:0.35) node{$4$};
\draw(330:0.35) node{$5$};

}
%\draw(0,0) node{};

%\fill[gray] (0, 0) -- (0:1) -- (60:1) -- cycle;

\end{tikzpicture} 
\end{centering}
\caption{The simplicial complex $P(2)$}
\end{figure}
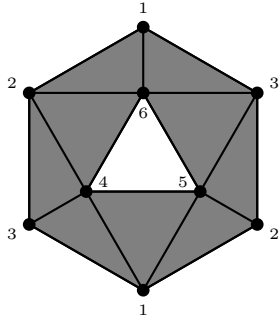
\end{center}

Observe that $\partial_2([1, 2, 6] + [2, 4, 6] + [2, 3, 4] - [1, 3, 4] - [1, 4, 5] + [1, 2, 5] + [2, 3, 5] - [3, 5, 6] - [1, 3, 6]) = 2([1, 2] - [1, 3] + [2, 3]) - ([4, 5] - [4, 6] + [5, 6])$. Thus $H_1(P)$ can be presented as $\langle a, b \mid 2a = b\rangle$ where $a$ is represented by $[1, 2] - [1, 3] + [2, 3]$ and $b$ is represented by $[4, 5] - [4, 6] + [5, 6]$. That is, $a$ is coherently represented by the triangle boundary on vertex set $\{1, 2, 3\}$ and $b$ is coherently represented by the triangle boundary on vertex set $\{4, 5, 6\}$. This completes the base case.\\

We are now ready to prove the inductive step; to build $P(d + 1)$ from $P(d)$. Begin with $P(d)$ a $d$-dimensional oriented simplicial complex, with the orientation induced by an ordering on the vertices, with $H_{d - 1}(P(d))$ presented as $\langle a, b\mid 2a = b \rangle$ with $a$ and $b$ coherently represented by vertex-disjoint $d$-simplex boundaries. Write $a_0, a_1 ..., a_d$ for the coherently-ordered vertices of the simplex boundary representing $a$ and $b_0, b_1, ..., b_d$ for the coherently-ordered vertices of the simplex boundary representing $b$. Now take the suspension of $P(d)$ with two suspension vertices $w_1$ and $w_2$. When taking the suspension $SP(d)$ keep the ordering on the vertices of $P(d)$ and add $w_1$ followed by $w_2$ to the beginning of the ordering.\\

 With respect to this ordering we have that the map $\phi: C_i(P(d)) \rightarrow C_{i + 1}(SP(d))$ given by sending the generator $[v_0, ..., v_i]$ to $[w_1, v_0, ..., v_i] - [w_2, v_0, ..., v_i]$ induces an isomorphism from $H_{i}(P(d))$ to $H_{i+1}(SP(d))$ for $i > 0$, in particular for $i = d - 1$ (This is routine to check, but we do prove it as the claim below.). Thus, by the inductive hypothesis $H_d(SP(d))$ is generated by $a', b'$ with the relator $2a' = b'$ and where $a'$ is represented by 
$$\sum_{i = 0}^d (-1)^i [w_1, a_0, ..., \hat{a_i}, ..., a_d] - \sum_{i = 0}^{d} (-1)^i [w_2, a_0, ..., \hat{a_i}, ..., a_d],$$ and $b'$ is represented by 
$$\sum_{i = 0}^d (-1)^i [w_1, b_0, ..., \hat{b_i}, ..., b_d] - \sum_{i = 0}^{d} (-1)^i [w_2, b_0, ..., \hat{b_i}, ..., b_d].$$

While we have that $H_d(SP(d)) = \langle a', b' \mid 2a' = b' \rangle$, we are not done since $a'$ and $b'$ are not represented by $(d + 1)$-simplex boundary. To fix this we add the $(d + 1)$-dimensional faces $[w_1, a_0, ..., a_d]$ and $[w_2, b_0, ..., b_d]$ along with the necessary $d$-dimensional faces $[a_0, ..., a_d]$ and $[b_0, ..., b_d]$ to our complex. Now we have that 
$$\partial_{d + 1}[w_1, a_0, ..., a_d] = [a_0, ..., a_d] + \sum_{i = 0}^d (-1)^{i + 1} [w_1, a_0, ..., \hat{a_i}, ..., a_d],$$
and 
$$\partial_{d + 1}[w_2, b_0, ..., b_d] = [b_0, ..., b_d] + \sum_{i = 0}^d (-1)^{i + 1} [w_2, b_0, ..., \hat{b_i}, ..., b_d].$$
Thus after adding in these two new $(d + 1)$-dimensional faces and two $d$-dimensional faces we have the new relators in the codimension-1 homology group of our complex given by 
$$[a_0, ..., a_d] = \sum_{i = 0}^d (-1)^{i} [w_1, a_0, ..., \hat{a_i}, ..., a_d]$$
and
$$[b_0, ..., b_d] = \sum_{i = 0}^d (-1)^{i} [w_2, b_0, ..., \hat{b_i}, ..., b_d]$$
Using these relators and the representatives for $a'$ and $b'$ listed above, we have that the codimension-1 homology group is generated by 
$$a' =  [a_0, ..., a_d] + \sum_{i = 0}^{d} (-1)^{i + 1} [w_2, a_0, ..., \hat{a_i}, ..., a_d]$$
and 
$$b' = -[b_0, ..., b_d] - \sum_{i = 0}^d (-1)^{i + 1}[w_1, b_0, ..., \hat{b_i}, ..., b_d]$$
with the relator $2a' = b'$. Moreover we have that $a'$ and $-b'$ are coherently represented by the boundary of $(d + 1)$-simplicies, namely the boundaries of $[w_2, a_0, ..., a_d]$ and $[w_1, b_0, ..., b_d]$ respectively. We are not quite finished yet since the orientation has been reversed, but if we simply reverse the order of $b_0$ and $b_1$ then we have the oriented simplicial complex $P(d + 1)$ that we want. Alternatively, we could observe that there is a sign change every time we increase the dimension and modify $P(2)$ by switching the labels 4 and 5 if we are building to an odd dimension $d$. 

 \end{proof}

In the previous proof we make use of the following claim, which we prove here for the sake of completeness.
\begin{unnumclaim}\label{isomorphism}
Suppose $X$ is an oriented simplicial complex with orientation induced by an ordering on the vertices of $X$. If we take the suspension of $X$, denoted $SX$, with the two suspension vertices $w_1$ and $w_2$ added to the beginning of the vertex ordering, then the map $\phi: C_i(X) \rightarrow C_{i + 1}(SX)$ given by sending each generator $[v_0, ..., v_i]$ to $[w_1, v_0, ..., v_i] - [w_2, v_0, ..., v_i]$ induces an isomorphism from $H_i(X)$ to $H_{i + 1}(X)$ for all $i > 0$.
\end{unnumclaim}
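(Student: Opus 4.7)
The plan is to establish this via a chain-level computation combined with the standard Mayer--Vietoris decomposition of the suspension.

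First, I would verify that $\phi$ is a chain map up to sign. For a simplex $\sigma = [v_0, \ldots, v_i]$, direct computation of $\partial[w_k, v_0, \ldots, v_i]$ yields $\partial[w_k, \sigma] = \sigma - [w_k, \partial\sigma]$, where $[w_k, \cdot]$ denotes the linear extension of the $w_k$-coning operation from simplices to chains. Subtracting the $k=2$ version from the $k=1$ version gives $\partial \phi(\sigma) = -\phi(\partial\sigma)$. Hence $\phi$ sends cycles to cycles and boundaries to boundaries, so it induces a well-defined homomorphism $\phi_{*} : H_i(X) \to H_{i+1}(SX)$.

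Next, I would decompose $SX = A \cup B$, where $A$ is the full subcomplex on $V(X) \cup \{w_1\}$ and $B$ is the full subcomplex on $V(X) \cup \{w_2\}$. Each of $A$ and $B$ is a simplicial cone, hence contractible, and $A \cap B = X$. The Mayer--Vietoris sequence
$$H_{i+1}(A) \oplus H_{i+1}(B) \longrightarrow H_{i+1}(SX) \xrightarrow{\partial_{\mathrm{MV}}} H_i(X) \longrightarrow H_i(A) \oplus H_i(B)$$
has vanishing flanking terms whenever $i \geq 1$, so $\partial_{\mathrm{MV}}$ is an isomorphism for $i \geq 1$.

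Finally, I would check that $\partial_{\mathrm{MV}} \circ \phi_{*} = \mathrm{id}_{H_i(X)}$, which forces $\phi_{*}$ to be the inverse of $\partial_{\mathrm{MV}}$ and hence itself an isomorphism. For any cycle $\sigma \in Z_i(X)$, write $\phi(\sigma) = \alpha + \beta$ with $\alpha = [w_1, \sigma] \in C_{i+1}(A)$ and $\beta = -[w_2, \sigma] \in C_{i+1}(B)$. The definition of the connecting homomorphism then gives $\partial_{\mathrm{MV}}[\phi(\sigma)] = [\partial \alpha] = [\sigma - [w_1, \partial\sigma]] = [\sigma]$, since $\partial\sigma = 0$. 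I do not foresee any substantial obstacle; this is the suspension isomorphism made explicit at the chain level, and the only real care needed is in bookkeeping the sign conventions so that the connecting map and $\phi_{*}$ line up correctly.
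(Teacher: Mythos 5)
Your proof is correct, but it takes a genuinely different route from the paper's. You decompose $SX$ as the union of the two cones $A$ (on $V(X)\cup\{w_1\}$) and $B$ (on $V(X)\cup\{w_2\}$) with $A\cap B = X$, use contractibility of the cones to conclude that the Mayer--Vietoris connecting map $\partial_{\mathrm{MV}}\colon H_{i+1}(SX)\to H_i(X)$ is an isomorphism for $i\geq 1$, and then check at the chain level that $\partial_{\mathrm{MV}}\circ\phi_{*}=\pm\,\mathrm{id}$, so $\phi_{*}$ is (up to the sign convention chosen for the connecting map) the inverse of $\partial_{\mathrm{MV}}$ and hence an isomorphism; your preliminary identity $\partial\phi=-\phi\,\partial$ is exactly what is needed for $\phi_{*}$ to be well defined, and the sign ambiguity is harmless since composing an isomorphism with $-\mathrm{id}$ is still an isomorphism. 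The paper instead argues entirely by hand: it writes the boundary matrix of $SX$ in block form with respect to the ordering (faces containing $w_1$, faces containing $w_2$, faces of $X$), verifies directly that $\phi$ sends cycles to cycles and boundaries to boundaries, and then proves injectivity and surjectivity of the induced map by explicit manipulation of chains. The paper's computation is self-contained, using nothing beyond the definition of the boundary operator in the chosen basis, whereas your argument is shorter and more conceptual: it identifies $\phi_{*}$ as an explicit chain-level realization of the suspension isomorphism and sidesteps the somewhat delicate surjectivity step (representing an arbitrary $(i+1)$-cycle of $SX$ by a chain supported on the coned faces) that the paper carries out directly.
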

\begin{proof}
Let $\partial_i$ denote the $i$th boundary map of $X$ and $\partial_i'$ denote the $i$th boundary map of $X'$. By the choice of ordering we have for each $i > 0$ that the matrix $\partial_{i + 1}'$ is given by 
\begin{center}
$\partial_{i + 1}' = 
\begin{pmatrix}
- \partial_i & 0 & 0  \\
0 & -\partial_i & 0 \\
I & I  & \partial_{i + 1}\\
\end{pmatrix},$
\end{center}
where the columns are indexed by $(i + 1)$-dimensional faces which contain $w_1$, followed by $(i + 1)$-dimensional faces that contain $w_2$, followed by $(i + 1)$-dimensional faces present in $X$, and the rows are indexed by $i$-dimensional faces that contain $w_1$, followed by $i$-dimensional faces that contain $w_2$, followed by $i$-dimensional faces present in $X$. With respect to this basis, the map $\phi$ sends an arbitrary vector $v = (a_0, ..., a_k) \in C_i(X)$ to the vector $(a_0, ..., a_k, -a_0, ..., -a_k, 0, ... 0)$ in $C_{i + 1}(SX)$, where $k$ is the number of $i$-dimensional faces in $X$. To simplify notation we denote $\phi(v)$ with $(v, -v, 0)$.\\

Now we prove that $\phi$ is well-defined on homology groups by showing that $\phi$ sends cycles to cycles and boundaries to boundaries. Suppose that $\partial_i(v) = 0$, then by the construction of $\partial_{i + 1}'$ given above we have that $\partial_{i + 1}'(\phi(v)) = \partial_{i + 1}'((v, -v, 0)) = 0$, so $\phi$ sends cycles to cycles. Next, suppose that $v$ is in the image of $\partial_i$, then there exists $u$ so that $\partial_i u = v$. Thus, $\partial_{i + 1}'((-u, u, 0)) = (v, -v, 0) = \phi(v)$. It follows that $\phi$ is a well-defined homomorphism on homology groups.\\

Now we check that $\phi$ is injective. Suppose $\phi(v) = 0$. Then $(v, -v, 0)$ belongs to $\Im(\partial_{i + 2}')$. Thus there exists $u$ which we write as $(u_1, u_2, u_3)$ so that $\partial_{i + 2}'((u_1, u_2, u_3)) = (v, -v, 0)$. Thus $\partial_{i + 1}(-u_1) = v$, so $v \in \Im(\partial_{i + 1})$. Thus at the level of homology groups $v = 0$, so $\phi$ is injective.\\

Now we show that $\phi$ is surjective. Let $z \in H_{i + 1}(SX)$. Since every $(i+1)$-dimensional face of $X$ is contained in at least one (in fact at least 2) $(i + 2)$-dimensional faces in $SX$, we may write $z$ as $x + y$ where $x$ is a sum of generating $(i + 1)$-chains that all contain the vertex $w_1$ and $y$ is a sum of generating $(i + 1)$-chains which all contain the vertex $w_2$. That is we may write $z = (x, y, 0)$. Since $\partial_{i + 1}'(z) = 0$ we have $\partial_{i + 1}'((x, y, 0)) = (-\partial_i x , -\partial_i y, x + y) = (0, 0, 0)$. Thus $x = -y$ in the sense that after deleting $w_1$ from every chain in the support of $x$ and deleting $w_2$ from every chain in the support of $y$ we have that $x$ and $-y$ are the exact same $i$-chain. Thus $\phi(x) = (x, -x, 0) = (x, y, 0) = z$, proving that $\phi$ in onto, and completing the proof that $\phi$ is an isomorphism. 
\end{proof}
Now that we have the construction for $P(d)$ we may apply Lemma \ref{attachinglemma} to construct the complex $Y_1$ that we need.
\begin{lemma}\label{Y1lemma}
Fix $d \geq 2$, and let $P = P(d)$ denote the complex constructed in Lemma \ref{Pdlemma}, and let $n$ be a positive integer. Then there exists a connected, oriented simplicial complex $Y_1$ with $\Delta(Y_1) \leq 2 \Delta(P)$, $|V(Y_1)| \leq n |V(P)|$, and $H_{d - 1}(Y_1)$ presented by $\langle \gamma_0, ..., \gamma_n \mid 2 \gamma_0 = \gamma_1, 2 \gamma_1 = \gamma_2, ..., 2 \gamma_{n - 1} = \gamma_n \rangle$ where each $\gamma_i$ is coherently represented by a $d$-simplex boundary $Z_i$ so that the for all $i \neq j$, $Z_i \cap Z_j = \emptyset$.
\end{lemma}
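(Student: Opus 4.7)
The plan is induction on $n$, chaining $n$ copies of $P$ together using Lemma \ref{attachinglemma} one attachment at a time. For the base case $n = 1$, simply set $Y_1 := P$ and rename the generators as $\gamma_0 := a$, $\gamma_1 := b$, with $Z_0, Z_1$ the corresponding coherently-representing $d$-simplex boundaries provided by Lemma \ref{Pdlemma}. All required properties are then immediate.

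For the inductive step, suppose $Y_1^{(n)}$ has been built with generators $\gamma_0, \ldots, \gamma_n$ coherently represented by pairwise vertex-disjoint boundaries $Z_0, \ldots, Z_n$. Take a fresh copy $P'$ of $P$ with generators $a', b'$ coherently represented by vertex-disjoint boundaries $A', B'$. Writing $a'_0 < \cdots < a'_d$ for the coherent ordering of $A'$ and $z_0 < \cdots < z_d$ for the coherent ordering of $Z_n$, define the simplicial map $f : A' \to Z_n$ by $a'_j \mapsto z_j$, and apply Lemma \ref{attachinglemma} with $t = 1$ to form $Y_1^{(n+1)} := Y_1^{(n)} \sqcup_f P'$. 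Since $\tilde{H}_{d-2}$ of a single $(d-1)$-sphere vanishes for $d \geq 2$, Lemma \ref{attachinglemma} yields
\[
H_{d-1}(Y_1^{(n+1)}) \;\cong\; \bigl(H_{d-1}(Y_1^{(n)}) \oplus H_{d-1}(P')\bigr)\big/\langle(\gamma_n, -a')\rangle,
\]
which unwinds to $\langle \gamma_0, \ldots, \gamma_{n+1} \mid 2\gamma_i = \gamma_{i+1},\ i=0,\ldots,n\rangle$ with $\gamma_{n+1} := b'$ coherently represented by $Z_{n+1} := B'$.

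It remains to verify disjointness, the vertex count, and the degree bound. Disjointness: $Z_0, \ldots, Z_{n-1}$ are unchanged and pairwise disjoint by the inductive hypothesis; $Z_n$ (identified with $A'$) stays disjoint from them since the attachment introduces no new shared vertices outside $P'$; and $Z_{n+1} = B'$ is disjoint from $Z_n = A'$ by Lemma \ref{Pdlemma} and from $Z_0, \ldots, Z_{n-1}$ because its vertices lie in $P' \setminus A'$ and are therefore fresh. Vertex count: each attachment identifies $d+1$ vertices, so $|V(Y_1^{(n+1)})| = |V(Y_1^{(n)})| + |V(P)| - (d+1) \leq (n+1)|V(P)|$. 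Degree bound: the key observation is that every vertex of $Y_1^{(n+1)}$ belongs to at most two of the copies $P_1, \ldots, P_{n+1}$—only vertices on an intermediate boundary $Z_i$ (for $1 \leq i \leq n$) are shared, and they belong to exactly two consecutive copies $P_i$ and $P_{i+1}$, since the $a$-boundary and $b$-boundary of any single $P$ copy are vertex-disjoint. Each copy contributes degree at most $\Delta(P)$ to any vertex, so $\Delta(Y_1^{(n+1)}) \leq 2\Delta(P)$. Connectedness is preserved at each step since both pieces are connected and the attaching subcomplex is nonempty.

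The main obstacle, which is more a matter of bookkeeping than of genuine difficulty, is ensuring that orientations combine correctly across successive attachments—specifically, that $f$ identifies the homology class $a'$ with $\gamma_n$ rather than $-\gamma_n$, so that the relation $(\gamma_n, -a')$ produced by Lemma \ref{attachinglemma} truly collapses to $\gamma_n = a'$. This follows from the convention that $f$ is order-preserving between the two (uniquely-determined, since $d \geq 2$) coherent orderings of $A'$ and $Z_n$, which is exactly the coherence hypothesis demanded by Lemma \ref{attachinglemma}. With this in place, the inductive steps compose transparently and deliver $Y_1$ with the stated presentation and bounds.
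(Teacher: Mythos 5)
Your proof is correct and follows essentially the same route as the paper: the paper also builds $Y_1$ by chaining $n$ copies of $P$ end-to-end via order-preserving identifications of a $b$-type simplex boundary with an $a$-type one, invoking Lemma \ref{attachinglemma} to track the presentation, with the same ``each vertex lies in at most two copies'' argument for $\Delta(Y_1) \leq 2\Delta(P)$ and the same vertex count. Your phrasing as an explicit induction, with the orientation/coherent-ordering bookkeeping spelled out, is just a more detailed write-up of the paper's one-shot attachment.
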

\begin{proof}
Fix $n$ and $d$. Take $n$ copies of $P$ denoted $P_1, P_2, ..., P_{n}$ with $H_{d - 1}(P_i) = \langle a_i, b_i \mid 2 a_i = b_i\rangle$, $a_i$ coherently represented by $d$-simplex boundary $A_i$ and $b_i$ coherently represented by $d$-simplex boundary $B_i$. Now use Lemma \ref{attachinglemma} to attach $P_i$ to $P_{i + 1}$ by the order preserving simplicial homeomorphism $f_i : B_i \rightarrow A_{i + 1}$ for every $i \in \{1, ..., n - 1\}$. This results in a connected, oriented simplicial complex $Y_1$ which has $H_{d - 1}(Y_1)$ presented by $\langle \gamma_0, ..., \gamma_n \mid 2 \gamma_0 = \gamma_1, 2 \gamma_1 = \gamma_2, ..., 2 \gamma_{n - 1} = \gamma_n \rangle$ so that each $\gamma_i$ is coherently represented by a $d$-simplex boundary which are all vertex disjoint from one another. Furthermore no vertex belongs to more than two copies of $P$, and so $\Delta(Y_1) \leq 2 \Delta(P)$. Moreover, it is clear that $|V(Y_1)| \leq n|V(P)|$ since $Y_1$ is built out of $n$ copies of $P$.
 \end{proof}

\subsection{Finishing the proof}
\begin{proof}[Proof of Lemma \ref{const}]
Fix the dimension $d \geq 2$. We prove that the constant $K = \max\{2 \Delta(P) + L + 1, 2|V(P)| + 4L \}$ satisfies the conclusion where $L$ is the constant depending only on $d$ from Lemma \ref{sphere} and $P$ is the simplicial complex depending only on $d$ from Lemma \ref{Pdlemma}. Let $m \geq 2$ be given. Write $m$ in its binary expansion $m = 2^{n_1} + 2^{n_2} + \cdots + 2^{n_k}$ where $0 \leq n_1 < n_2 < \cdots < n_k$. Note that $n_k \leq \log_2 m$ and $k \leq \log_2 m + 1$. By Lemma \ref{Y1lemma} with $n = n_k$ there exists a connected, oriented simplicial complex $Y_1$ with $\Delta(Y_1) \leq 2 \Delta(P)$, $|V(Y_1)| \leq n_k |V(P)|$, and $H_{d - 1}(Y_1)$ presented by $\langle \gamma_0, \gamma_1, \cdots, \gamma_{n_k} \mid 2 \gamma_0 = \gamma_1, 2 \gamma_1 = \gamma_2, \cdots 2 \gamma_{n_k - 1} = \gamma_{n_k} \rangle$ where each $\gamma_i$ is coherently represented by a $d$-simplex boundary all of which are vertex-disjoint from one another. \\

Next by Lemma \ref{Y2lemma} there exists a connected, oriented simplical complex $Y_2$ with $\Delta(Y_2) \leq L$ and $|V(Y_2)| \leq 2Lk$ so that $H_{d - 1}(Y_2)$ is presented as $\langle \tau_1, \tau_2, ..., \tau_k \mid \tau_1 + \tau_2 + \cdots + \tau_k = 0 \rangle$ where each $\tau_i$ is coherently represented by a $d$-simplex boundary, which are vertex disjoint and nonadjacent to one another. \\

Now $Y_1$ and $Y_2$ satisfy the assumptions of Lemma \ref{finalstep} with the list of nonnegative integers $(n_1, n_2, ..., n_k)$, so they may be glued together along a subcomplex to build a simplicial complex $X$ with $H_{d - 1}(X)_T \cong \Z / (2^{n_1} + 2^{n_2} + \cdots + 2^{n_k}) \Z = \Z / m\Z$. Finally, since $X$ is build by attaching $Y_1$ to $Y_2$ along a subcomplex we have $$\Delta(X) \leq \Delta(Y_1) + \Delta(Y_2) \leq 2\Delta(P) + L \leq K - 1$$ and $$|V(X)| \leq |V(Y_1)| + |V(Y_2)| \leq |V(P)|n_k + 2Lk \leq (2|V(P)| + 4L)\log_2 m \leq K \log_2 m.$$
 
\end{proof}

\section{A remark about cohomology}
Given a dimension $d \geq 2$ and a finite abelian group $G$, the proof of Lemma \ref{general} builds a simplicial complex $X$ whose homology groups may all be computed. If $G \cong \Z/m_1 \Z \oplus \Z/ m_2 \Z \oplus \cdots \oplus m_l \Z$, with $m_1 | m_2 | \cdots | m_l$ then $H_i(X) = 0$ for all $i \notin \{0, 1, d - 1\}$, $H_0(X) \cong \Z^l$, $H_1(X) \cong \Z^{k_1 + k_2 + \cdots + k_l - l}$ where $k_i$ denotes the Hamming weight of $m_i$, and $H_{d - 1}(X) \cong G$ (take the direct sum of these last two if $d = 2$). This can all be checked routinely using the Mayer--Vietoris sequence, but as we are only interested in $H_{d - 1}(X)_T$ for the main result, we do not include the details about the computation of the other homology groups. \\

Even though we know all the homology groups of $X$, after coloring the vertices of $X$ by $c$ as in Lemma \ref{colorings} using the probabilistic method, we have no control over $H_i((X, c))$ for $i < d - 1$ nor the free part of $H_{d - 1}((X, c))$. However, we observe that under the assumptions of Lemma \ref{reduce}, $H_d((X, c)) \cong H_d(X)$. Thus by the universal coefficient theorem we have that $H^d((X, c)) \cong H^d(X)$. Since $H_d(X) = 0$, our proof gives a stronger version of the main theorem in terms of cohomology as follows:
\begin{unnumtheorem}[Cohomological statement of Theorem 1]
Let $d \geq 2$, then there exists constants $c_d$ and $C_d$ so that for any finite abelian group $G$,
$$c_d(\log |G|)^{1/d} \leq T^d(G) \leq C_d(\log |G|)^{1/d},$$
where $T^d(G)$ denotes the minimum number of vertices $n$ so that there exists a simplicial complex $X$ on $n$ vertices with $H^d(X)$ isomorphic to $G$.
\end{unnumtheorem}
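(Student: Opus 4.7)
The plan is to deduce both bounds from material already in the paper: the upper bound reuses the simplicial complex built in the proof of Theorem~1 together with a universal coefficient computation, while the lower bound reduces $T^d(G)$ to $T_d(G)$ via the universal coefficient theorem and then invokes the Kalai--Gabber estimate quoted in the introduction.

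For the upper bound, first I would take the complex $X$ produced by Lemma~\ref{general} for the finite abelian group $G$. As the remark notes, the construction of $X$ (as a disjoint union of telescope-and-sphere complexes) yields $H_d(X)=0$, which is routine to read off from the Mayer--Vietoris computation in Lemma~\ref{attachinglemma}. Then I would apply Lemma~\ref{colorings} to get a coloring $c$ and form the pattern complex $(X,c)$. The proof of Lemma~\ref{reduce} shows that $X$ and $(X,c)$ have the same quotient by their respective $(d-2)$-skeleta, and since that quotient is a $d$-dimensional CW complex whose $d$-chains and top boundary map are unchanged, we also get $H_d((X,c)) \cong H_d(X) = 0$. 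Now the universal coefficient theorem gives $H^d((X,c)) \cong \mathrm{Hom}(H_d((X,c)),\Z) \oplus \mathrm{Ext}(H_{d-1}((X,c)),\Z) \cong H_{d-1}((X,c))_T \cong G$. The vertex count is identical to the one already established for Theorem~1, so $T^d(G) \leq C_d(\log|G|)^{1/d}$ with the same constant $C_d$.

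For the lower bound, let $X$ be any $d$-dimensional simplicial complex on $n$ vertices with $H^d(X) \cong G$. Since $C_d(X)$ is free abelian and $H_d(X)$ is a subgroup of $\ker \partial_d \subseteq C_d(X)$, the group $H_d(X)$ is itself free abelian. Applying the universal coefficient theorem,
\[
G \cong H^d(X) \cong \mathrm{Hom}(H_d(X),\Z) \oplus \mathrm{Ext}(H_{d-1}(X),\Z) \cong \Z^{\mathrm{rk}\, H_d(X)} \oplus H_{d-1}(X)_T.
\]
Finiteness of $G$ forces $H_d(X) = 0$ and $H_{d-1}(X)_T \cong G$. Hence every complex realizing $G$ as $H^d$ also realizes $G$ as the torsion in $H_{d-1}$, giving $T_d(G) \leq T^d(G)$. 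Combining this with the lower bound $T_d(G) \geq c_d(\log|G|)^{1/d}$ from the theorem of Kalai--Gabber cited in the introduction yields $T^d(G) \geq c_d(\log|G|)^{1/d}$ with the same constant $c_d$.

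There is no genuine obstacle: the only input beyond what has already been proven is the freeness of $H_d$ for a $d$-dimensional simplicial complex and the standard universal coefficient splitting. The small subtlety worth being careful about is the claim $H_d((X,c)) \cong H_d(X)$, which needs the observation that Lemma~\ref{reduce}'s map of quotients is a homeomorphism in top dimension as well, not just an isomorphism in degree $d-1$; this is immediate from the proof given there but deserves an explicit sentence.
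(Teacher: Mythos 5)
Your proposal is correct and follows essentially the same route as the paper's remark: it uses $H_d(X)=0$ for the construction, the fact that Lemma~\ref{reduce}'s quotient homeomorphism also gives $H_d((X,c))\cong H_d(X)$, and the universal coefficient theorem to transfer both bounds, with the lower bound reduced to $T_d(G)$ via freeness of $H_d$ for a $d$-dimensional complex. The only difference is that you spell out explicitly the UCT reduction for the lower bound, which the paper leaves implicit.
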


\section{Concluding remarks and open problems}
While Theorem 1 is best possible in terms of the growth of $T_d(G)$ with $d$ fixed, the constants which appear in the proof are by no means optimal. Indeed, as stated in the proof of Theorem 1 the constant $C_d$ is given by 
$$C_d = \frac{18K^{8 + d^{-1}}d^6}{\sqrt[d]{\log{2}}},$$
where $K$ is assumed to be at least 5 and $d$ is assumed to be at least 2. However the initial construction in Lemma 1 gives a construction with at most $K \log_2(|G|)$ where $K$ is the same $K$ which appears in the calculation of $C_d$. Thus we have that 
$$C_d \sqrt[d]{\log|G|} \leq K \log_2|G|$$
only if 
$$\log_2(|G|) \geq \left(18K^{7 + d^{-1}}d^6\right)^{d/(d - 1)}$$
By the assumption on $K$ and $d$, we have that the size of $G$ has to be at least $2^{90{,}000{,}000} $ in order for the bound given by the final construction to even possibly be better than the bound given by the initial construction.  So it is computationally infeasible to compute meaningful upper bound on $T_d(G)$ for any group $G$ of reasonable size purely from the statement of Theorem 1. \\

On the other hand, we could go through the proof and be more careful with our bounds on $\Delta(Y_1)$, $\Delta(Y_2)$, $|V(Y_1)|$, and $|V(Y_2)|$ in an attempt to improve the constant. However, we still have a 3-step approach to finding our coloring $c$, and computations suggest that for small groups $G$ this approach is fairly wasteful.\\

Moreover, the proof of Theorem 1 is non-constructive and so it also doesn't provide a way to actually give a construction that provides the upper bound on $T_d(G)$. Now there are algorithmic versions of the Lov\'asz Local Lemma (see for example \cite{Moser}) which can adapt the use of the Lov\'asz Local Lemma here to give a fully constructive proof, however this still leaves the problem that the group must be extremely large as above in order to be able to directly apply Theorem 1.\\

 Nevertheless the strategy of the proof does give a way to actually construct, for any group $G$, a complex which provides an upper bound on $T_d(G)$. One simply builds the initial construction and then properly colors the vertices according to the rule that no two $(d - 1)$-dimensional faces receive the same pattern. Thus one could apply a type of greedy-coloring approach in order to give explicit constructions of complexes which bound $T_d(G)$ for any group $G$. While this greedy approach may not be asymptotically best possible, early experimental evidence suggests that it works reasonably well, at least when $G$ is a cyclic group. Table \ref{tbl:examples} gives the results of this strategy to upper bound $T_d(\Z/m\Z)$. For each $m$ and $d$ we have a column which provides the number of vertices in the initial construction $X$ (essentially as in the proof of Lemma \ref{const}, though a bit more efficient in triangulating $Y_2$) and a second column that gives the vertices in the final construction $(X, c)$ where $c$ is a coloring found by a greedy algorithm which gives a proper coloring of $V(X)$ so that no two $(d - 1)$-dimensional faces receive the same pattern. Thus $V((X, c))$ is an upper bound on $T_d(\Z/m\Z)$. The natural question is the following:
\begin{question}
Can the bound on $C_d$ be improved by finding a feasible algorithm to properly color the vertices in the initial construction so that no two $(d-1)$-dimensional faces receive the same pattern?
\end{question}
\begin{table}[h]
  \centering
  \renewcommand{\arraystretch}{1.2}
  \begin{tabular}{|c|c|c|c|c|c|c|c|c|}
    \hline
    \multirow{2}{1cm}{\hfill$m$\hfill} & \multicolumn{2}{c|}{$d = 2$} & \multicolumn{2}{c|}{$d = 3$} & \multicolumn{2}{c|}{$d = 4$} & \multicolumn{2}{c|}{$d = 5$} \\ \cline{2-9}
	 & $|V(X)|$ & $|V((X, c))|$ & $|V(X)|$ & $|V((X, c))|$ & $|V(X)|$ & $|V((X, c))|$ & $|V(X)|$ & $|V((X, c))|$ \\ \hline
$10^{10}$ & 115 & 46 & 147 & 39 & 180 & 43 & 214 & 61\\
$10^{25}$ & 273 & 69 & 352 & 51 & 434 & 52 & 518 & 76\\
$10^{50}$ & 561 & 101 & 710 & 66 & 869 & 62 & 1031 & 82\\
$10^{100}$ & 1106 & 142 & 1406 & 82 & 1722 & 70 & 2046 & 86\\
$10^{250}$ & 2789 & 223 & 3524 & 109 & 4307 & 86 & 5110 & 91\\
$10^{500}$ & 5576 & 307 & 7042 & 134 & 8605 & 99 & 10208 & 95\\
$10^{1000}$ & 11131 & 432 & 14067 & 168 & 17196 & 118 & 20403 & 103 \\
$10^{2018}$ & 22461 & 609 & 28384 & 209 & 34698 & 138 & 41169 & 112 \\
     \hline

  \end{tabular}
  \caption{Greedy Approach to Bound $T_d(\Z/m\Z)$. For each $d$, $X$ denotes the initial construction and $(X, c)$ denotes the final construction.}\label{tbl:examples}
\end{table}

A second way one could extend the result here is to prove a $\Q$-acyclic version of Theorem 1. That is given $G$ and $d$, provide a construction of a $d$-dimensional $\Q$-acyclic complex $X$ with $H_{d - 1}(X) \cong G$. In the proof of Theorem 1, we do end up constructing a complex which has trivial top homology group. Moreover, we can simply add in all the missing $(d - 1)$-dimensional faces and not change the torsion part of $H_{d - 1}(X)$. The key obstacle to proving that we may give a construction of a $\Q$-acyclic complex with prescribed torsion group is that after building the construction in the proof of Theorem 1 and filling in all the $(d - 1)$-dimensional faces, we will likely have $\beta_{d - 1} > 0$. Now as long as $\beta_{d - 1} > 0$ one could find a $d$-cell that may be filled in to drop $\beta_{d - 1}$, but we have no guarantee that doing so will not increase the size of the torsion group. However, the initial torsion group will at least be contained in the final torsion group in this case which gives use the following partial result toward a $\Q$-acyclic version as a corollary to our proof of Theorem 1.
\begin{corollary}
For every $d \geq 2$, there exists a constant $C_d$ so that for any finite abelian group $G$, there is a $d$-dimensional $\Q$-acyclic complex $X$ on at most $C_d \sqrt[d]{\log|G|}$ vertices with $G \leq H_{d - 1}(X)$.
\end{corollary}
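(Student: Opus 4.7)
The plan is to start with the complex provided by Theorem 1 and then adjoin further faces to obtain a $\Q$-acyclic complex without introducing new vertices. Let $X_0$ be the $d$-dimensional simplicial complex on $n \le C_d \sqrt[d]{\log|G|}$ vertices produced by the proof of Theorem 1, so that $H_{d-1}(X_0)_T \cong G$ and $H_d(X_0) = 0$. First I would form $X_1$ by adjoining to $X_0$ every missing $(d-1)$-simplex on $V(X_0)$. Since no $d$-faces are added, the map $\partial_d$ is unchanged and $H_d(X_1) = 0$. The long exact sequence of the pair $(X_0 \cup \sigma, X_0)$ for a newly attached $(d-1)$-cell shows that each such attachment can only add a free summand $\Z$ to $H_{d-1}$; in particular the torsion subgroup $H_{d-1}(X_1)_T \cong G$ is preserved. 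Thus $X_1$ has complete $(d-1)$-skeleton, $H_d(X_1) = 0$, and $H_{d-1}(X_1) \cong \Z^{r_1} \oplus G$ for some $r_1 \geq 0$.

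Next I would iteratively adjoin $d$-cells to kill the free part of $H_{d-1}$. The rule: at each stage, with current complex $X'$ satisfying $H_{d-1}(X') \cong \Z^r \oplus T$ and $G \le T$, pick a missing $d$-face $\sigma$ such that $g := [\partial\sigma] \in H_{d-1}(X')$ has \emph{infinite} order, and replace $X'$ by $X' \cup \sigma$. Writing $g = (g_{\mathrm{free}}, g_{\mathrm{tors}})$ with $g_{\mathrm{free}} \neq 0$, any nonzero multiple $kg$ has nonzero free component, so $\langle g \rangle \cap T = 0$, giving the short exact sequence
$$0 \;\longrightarrow\; T \;\longrightarrow\; H_{d-1}(X' \cup \sigma) \;\longrightarrow\; \Z^r/\langle g_{\mathrm{free}} \rangle \;\longrightarrow\; 0.$$
This embeds $T \supseteq G$ into the new $H_{d-1}$ while dropping $\beta_{d-1}$ by one. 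Moreover $H_d$ stays trivial, since any new $d$-cycle $a\sigma + \sum b_i \tau_i$ would make $ag$ a boundary in $X'$, and the infinite order of $g$ forces $a = 0$.

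It remains to argue that such a $\sigma$ always exists whenever $\beta_{d-1}(X') > 0$. If every missing $d$-face had its boundary class torsion in $H_{d-1}(X')$, then the subgroup of $H_{d-1}(X')$ generated by $\{[\partial\sigma] : \sigma \text{ missing}\}$ would be entirely torsion, and hence the quotient by it would retain positive rank. But that quotient is $H_{d-1}$ of the full $d$-skeleton of $\Delta^{n-1}$, which is zero -- a contradiction. So the iteration can proceed until some $X_\infty$ has $\beta_{d-1}(X_\infty) = 0$. Then $X_\infty$ has complete $(d-1)$-skeleton, $H_d(X_\infty) = 0$, and finite $H_{d-1}(X_\infty)$ containing $G$, so $X_\infty$ is $\Q$-acyclic on at most $C_d \sqrt[d]{\log|G|}$ vertices, proving the corollary.

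The main subtlety -- and precisely why this is only a partial analogue of Theorem 1 -- is the insistence on $[\partial\sigma]$ having infinite order at each step. A $\sigma$ whose boundary class had finite nonzero order would allow $\langle g \rangle$ to meet $T$ nontrivially and potentially collapse a portion of $G$; the infinite-order condition is what makes the interaction with the torsion subgroup trivial, keeping $G$ intact throughout the iteration.
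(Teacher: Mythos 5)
Your argument is correct and follows essentially the same route as the paper: complete the $(d-1)$-skeleton (which leaves the torsion of $H_{d-1}$ unchanged) and then repeatedly fill in $d$-faces whose boundary classes have infinite order, so that $\beta_{d-1}$ drops while $G$ stays embedded and $H_d$ stays trivial. Your short exact sequence and the complete-$d$-skeleton argument for the existence of such a face are precisely the details the paper leaves implicit when it asserts that one can always fill a $d$-cell dropping $\beta_{d-1}$ and that the initial torsion group remains contained in the final homology group.
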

In particular, this theorem implies that for each $d$ there is a constant $C_d$ so that for every $n$ one has that $p$-torsion is possible in a $d$-dimensional $\Q$-acyclic complex on $n$ vertices for \emph{every} prime $p$ of size at most $\exp(n^d/C_d)$. The question for a $\Q$-acyclic version of Theorem 1 is:
\begin{question}
For every $d \geq 2$ is there a constant $C_d$ so that for every group $G$ there exists a $\Q$-acyclic complex $X$ with $H_{d - 1}(X) \cong G$ and $|V(X)| \leq C_d\sqrt[d]{\log |G|}$?
\end{question}

A final open problem related to Theorem 1 is the following:
\begin{question}\label{FixNFixD}
For a fixed $n$ and $d$ what is the largest abelian group $G$ which can be realized as the torsion part of the $(d - 1)$st homology group for a $d$-dimensional simplicial complex on $n$ vertices?
\end{question}

It is clear that the answer to Question \ref{FixNFixD} is realized by a $d$-dimensional $\Q$-acyclic complex on $n$ vertices. Indeed adding the restriction that the $(d - 1)$-skeleton of the complex with maximum torsion group size is complete does not affect the problem, and deleting a face which drops $\beta_d$ cannot decrease the size of the torsion group, nor can adding a face which drops $\beta_{d - 1}$. However, even for small $n$ and $d$, this problem appears to be highly nontrivial. For example, the group $\Z/3\Z \times (\Z/30\Z)^5$ of size 72,900,000 is realizable as $H_3(X)$ for $X$ a $4$-dimensional complex on only 11 vertices. This example came from a sum complex, as defined in \cite{LMR}, in this case the sum complex $X_{\{1, 2, 3, 5, 8\}}$ on 11 vertices.  Moreover, even random examples as in the example on 16 vertices above or those in Table \ref{tbl:torsionburst} give enormous torsion groups. In fact, the 5-dimensional example given above isn't even close to the largest possible torsion group on 16 vertices. By increasing the dimension to 7 one can find, from the torsion burst in the Linial--Meshulam model, a complex on $16$ vertices with torsion part of the sixth homology group equal to a cyclic group of size about $4.6096 \times 10^{286}$.

%One could also consider asymptotics for Question \ref{FixNFixD}. For $n \in \N$, let $T(n)$ denote the largest torsion group realizable within a homology group of a simplicial complex on $n$ vertices. Formally,
%$$T(n) = \max_{d \in [n - 1]} \max \{ |H_{d - 1}(X)_T| : \text{$X$ is a $d$-dimensional complexes on $n$ vertices}\}$$

%Finally by using Kalai's $\Q$-acyclic enumeration result together with the trivial upper bound on the number of $d$-dimensional $\Q$-acyclic complex on $n$ vertices of ${2^{\binom{n}{d + 1}}}$ one has the following.
%\begin{unnumtheorem}
%For every $n, d \in \N$ there exists a $d$-dimensional $\Q$-acyclic complex $X$ on $n$ vertices with 
%$$|H_{d - 1}(X)|^2 \geq \frac{n^{\binom{n - 2}{d}}}{2^{\binom{n}{d + 1}}}$$
%\end{unnumtheorem}

\bibliography{ResearchBibliography}

\providecommand{\bysame}{\leavevmode\hbox to3em{\hrulefill}\thinspace}
\providecommand{\MR}{\relax\ifhmode\unskip\space\fi MR }
% \MRhref is called by the amsart/book/proc definition of \MR.
\providecommand{\MRhref}[2]{%
  \href{http://www.ams.org/mathscinet-getitem?mr=#1}{#2}
}
\providecommand{\href}[2]{#2}
\begin{thebibliography}{10}

\bibitem{AS}
N.~Alon and J.~H. Spencer, \emph{The probabilistic method (4th ed.)}, Wiley
  Publishing, 2016.

\bibitem{AL}
L.~Aronshtam and N.~Linial, \emph{When does top homology in a random simplicial
  complex vanish?}, Random Structures \& Algorithms \textbf{46} (2013), 26--35.

\bibitem{EL}
P.~Erd\H{o}s and L.~Lov\'{a}sz, \emph{Problems and results on 3-chromatic
  hypergraphs and some related questions}, Infinite and finite sets \textbf{10}
  (1975), 609--627.

\bibitem{Hatcher}
A.~Hatcher, \emph{Algebraic topology}, Cambride University Press, Cambridge,
  UK, 2002.

\bibitem{HKP2}
C.~Hoffman, M.~Kahle, and E.~Paquette, \emph{The threshold for integer homology
  in random $d$-complexes}, Discrete \& Computational Geometry \textbf{57}
  (2017), no.~4, 810--823.

\bibitem{KLNP}
M.~Kahle, F.~H. Lutz, A.~Newman, and K.~Parsons, \emph{Cohen--{L}enstra
  heuristics for torsion in homology of random complexes}, 21 pages,
  arXiv:1710.05683, 2017.

\bibitem{Kalai}
G.~Kalai, \emph{Enumeration of $\mathbb{Q}$-acyclic simplicial complexes},
  Israel Journal of Mathematics \textbf{45} (1983), 337--351.

\bibitem{Kuhnel}
W.~K\"{u}hnel, \emph{Minimal triangulations of {K}ummer varieties}, Abh. Math.
  Sem. Univ. Hamburg \textbf{57} (1987), 7--20.

\bibitem{LMR}
N.~Linial, R.~Meshulam, and M.~Rosenthal, \emph{Sum complexes -- a new family
  of hypertrees}, Discrete Comput. Geom \textbf{44} (2010), 622--636.

\bibitem{LP}
N.~Linial and Y.~Peled, \emph{On the phase transition in random simplicial
  complexes}, Annals of Mathematics \textbf{184} (2016), 745--773.

\bibitem{LP2}
T.~{\L}uczak and Y.~Peled, \emph{Integral homology of random simplicial
  complexes}, Discrete {\&} Computational Geometry \textbf{59} (2018), no.~1,
  131--142.

\bibitem{Lutz}
F.~H. Lutz, \emph{Triangulated manifolds with few vertices and
  vertex-transitive group actions}, Ph.D. thesis, Technischen Universit\"{a}t
  Berlin, 1999.

\bibitem{MatsumuraMoore}
T.~Matsumura and W.~F. Moore, \emph{Connected sums of simplicial complexes and
  equivariant cohomology}, Osaka J. Math. \textbf{51} (2014), no.~2, 405--425.

\bibitem{Meshulam}
R.~Meshulam, \emph{Uncertainty principles and sum complexes}, J. Algebraic
  Comb. \textbf{40} (2014), no.~4, 887--902.

\bibitem{Moser}
R.~A. Moser and G.~Tardos, \emph{A constructive proof of the general
  {L}ov\'{a}sz {L}ocal {L}emma}, J. ACM \textbf{57} (2010), no.~2, 11:1--11:15.

\bibitem{Pachner1}
U.~Pachner, \emph{Konstruktionsmethoden und das kombinatorische
  hom\"oomorphieproblem f¨ur triangulationen kompakter semilinearer
  mannigfaltigkeiten},  \textbf{12} (1987), 69--86.

\bibitem{Pachner2}
\bysame, \emph{{P}.{L}. homeomorphic manifolds are equivalent by elementary
  shellings},  \textbf{12} (1991), 129--145.

\bibitem{Palmieri}
J.~{P}almieri~(https://mathoverflow.net/users/4194/john palmieri), \emph{Small
  simplicial complexes with torsion in their homology?}, MathOverflow,
  URL:https://mathoverflow.net/q/17771 (version: 2010-03-10).

\bibitem{Soule}
C.~Soul\'{e}, \emph{Perfect forms and the {V}andiver conjecture}, J. Reine
  Angew. Math. \textbf{517} (1999), 209--221.

\bibitem{Speyer}
D.~{S}peyer~(https://mathoverflow.net/users/297/david speyer), \emph{Small
  simplicial complexes with torsion in their homology?}, MathOverflow,
  URL:https://mathoverflow.net/q/17852 (version: 2010-04-20).

\bibitem{Walkup}
D.~W. Walkup, \emph{The lower bound conjecture for 3- and 4-manifolds}, Acta
  Math. \textbf{125} (1970), 75--107.

\end{thebibliography}
\bibliographystyle{amsplain}

\end{document}